\patchcmd\Gread@eps{\@inputcheck#1 }{\@inputcheck"#1"\relax}{}{}
 \newcommand\scalemath[2]{\scalebox{#1}{\mbox{\ensuremath{\displaystyle #2}}}}
  \newcommand\De[2]{D_n^s({#1},{#2})}
  \newcommand{\blobbed}{W^c_b(\atc )}
 \newcommand{\pos}{W^{+c+}(\atc )}
 \newcommand{\atc}{\tilde{C}_n}
 \newcommand{\tl}{TL\tilde{C}_n(q,Q)}
 \newcommand{\twob}{2BTL_n(q,Q)}
 \newcommand{\simp}{SB_n(q,Q,\kappa)}
 \newcommand\s{\mathfrak{s}}
\journal{ }
\numberwithin{equation}{section}
\theoremstyle{plain}
\newtheorem{teo}{Theorem}[section]
\newtheorem{coro}[teo]{Corollary}
\newtheorem{defi}[teo]{Definition}
\newtheorem{exa}[teo]{Example}
\newtheorem{lem}[teo]{Lemma}
\newtheorem{propos}[teo]{Proposition}
\newtheorem{rem}[teo]{Remark}
\newtheorem{thmx}{Theorem}
\newenvironment{demo}{\noindent \textit{Proof:}  \rm}{\quad \hfill $\square$}
\begin{document}

\begin{frontmatter}
\author{Sadek Al Harbat, Camilo Gonz\'alez and David Plaza}
\title{Type  $\tilde{C}$ Temperley-Lieb algebra quotients and Catalan combinatorics}
\address{Instituto de Matem\'atica y F\'isica\\ Universidad de Talca \\Talca, Chile}

\begin{abstract}
We study some algebraic and combinatorial features of two algebras that arise as quotients of Temperley-Lieb algebras of type $\tilde{C}$, namely, the two-boundary Temperley-Lieb algebra  and the symplectic blob algebra. We provide a monomial basis for both algebras. The elements of these bases are parameterized by certain subsets of fully commutative elements. We enumerate these elements according to their affine length.\\     
\end{abstract}

\begin{keyword}
 Temperley-Lieb algebras,  Fully commutative elements, Catalan triangle. 
\end{keyword}
\end{frontmatter}

\section{Introduction}
\label{intro}
The Temperley-Lieb algebra was introduced around fifty years ago from considerations in statistical mechanics \cite{temperley1971relations}.  Since then it has turned out to be related to many topics of mathematics, including knot theory, algebraic combinatorics, algebraic Lie theory, etc. In 1987, Jones \cite{jones1987hecke} observed that the Temperley-Lieb algebra can be realized as a quotient of the Hecke algebra associated to a Coxeter system of type $A$. In his thesis,  Graham \cite{graham1995modular} took that observation far beyond its original scope. Concretely, given an arbitrary Coxeter system $(W,S)$, he defined a quotient of the Hecke algebra associated to $(W,S) $ that he called \emph{generalized Temperley-Lieb algebra}. Furthermore, he showed that for any $(W,S)$ the associated generalized Temperley-Lieb algebra admits a basis indexed by  the set of fully commutative (FC for short) elements  of $W$. 
 \begin{defi}
 	An element $w\in W$ is fully commutative if any   reduced expression for $w$ can be obtained from any other by using exclusively braid relations of the form $st=ts$,  $s,t\in S$.
 \end{defi}

Full commutativity has been given a proper place by Stembridge in the series of papers \cite{stembridge1996fully,stembridge1997some,stembridge1998enumeration}. In particular, and after classifying the Coxeter systems with finitely many FC elements \cite{stembridge1996fully}, he gave a normal form for FC elements in each of the infinite families of finite Coxeter systems \cite{stembridge1997some}. In this work we focus on type $B$. Later on the first author gave a normal form for FC elements in the four infinite families of affine Coxeter systems.  In this paper we are interested in type $\tilde{C}$ \cite{al2017typec}.

Let $K$ be an algebraically closed field, let $ n $ be a positive integer and let $\delta , \delta_L, \delta_R, \kappa_L,  \kappa_R,\kappa \in K^\times$. In this work we study the following $K$-algebras. 
\begin{itemize}
	\item The two-boundary Temperley-Lieb algebra with $n+1$ generators: $2BTL_n(\delta , \delta_L, \delta_R, \kappa_L,  \kappa_R)$.
	\item The symplectic blob algebra with $n+1$ generators: $SB_n(\delta , \delta_L, \delta_R, \kappa_L,  \kappa_R,\kappa)$.
\end{itemize}

\medskip
 
The first algebra was introduced in \cite{de2009two} as a quotient of the $\tilde{C}$-type affine Hecke algebra and was given a diagrammatical presentation. The second algebra was defined in \cite{martin2007towers} as an algebra of diagrams, then was given by generators and relations in \cite{green2012presentation} and was studied more deeply later on,  see \cite{reeves2011blob} for example. 

Given a positive integer $n$ we denote by $W(\tilde{C}_{n}) $  the affine Coxeter group of type $\atc$ ($W(\tilde{C}_{1}) $ to be understood  as the Coxeter group of type $B_2$). This is, $W(\tilde{C}_{n}) $ is the group  given by the following Coxeter diagram:

\begin{equation}
	\DynkintypeC
\end{equation}

Let  $\tl$ be the Temperley-Lieb algebra associated to $W(\tilde{C}_{n}) $. Let $W^c(\atc)$ be the set of FC elements in $W(\atc)$ and $\{b_w \, | \, w\in W^c(\atc) \}$ the monomial basis of $\tl$ (see Definition \ref{defin monomial basis}).  The starting point of this work is the following observation (already present in \cite{martin2007towers}): By specializing the parameters of the two algebras under focus we obtain a sequence of surjective morphisms of $K$-algebras 
\begin{equation} \label{intro surjective morphisms}
	\tl \twoheadrightarrow \twob \twoheadrightarrow SB_n(q,Q,\kappa),
\end{equation}
where $\twob$ and $SB_n(q,Q,\kappa)$ denote the specialized algebras (see Section \ref{section algebras} for  details). The tower of $\tilde{C}$-type TL algebras defined in \cite{al2017typec} gives rise to a faithful tower of two boundary TL algebras but not of symplectic blob algebras (see Remark \ref{abouttowers}).

\medskip
 We define two subsets of $W^c(\tilde{C}_n)$, namely: 
\begin{itemize}
	\item The set of positive fully commutative elements, $W^{+c+}(\tilde{C}_n)$ (see Definition \ref{defi positive fully commutative elements}).
	\item The set of blobbed fully commutative elements, $W^c_b(\tilde{C}_n)$ (see Definition \ref{defi blobbed fully commutative elements}).
\end{itemize}
The  set $\pos$ is infinite while $\blobbed $ is finite. Furthermore, we have
\begin{equation}
	W^c(\tilde{C}_n)\hookleftarrow  W^{+c+}(\tilde{C}_n)  \hookleftarrow  W^c_b(\tilde{C}_n).
\end{equation}
\noindent
Actually, those three sets index bases of the three algebras above. Indeed, our first main results are the following (where we use the same notation for  $b_w$ and its image under the morphisms in \eqref{intro surjective morphisms}).

\begin{thmx}\label{teo intro basis two boundary}
	The set $\{b_w \, | \, w\in \pos \}$ is a $K$-basis for $\twob$.
\end{thmx}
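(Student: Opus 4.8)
The plan is to prove separately that $\{b_w \mid w\in \pos\}$ spans $\twob$ and that it is linearly independent, using the surjection $\tl \twoheadrightarrow \twob$ of \eqref{intro surjective morphisms} together with the diagrammatic presentation of $\twob$ given in \cite{de2009two}.

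\emph{Spanning.} Since the morphism $\tl \twoheadrightarrow \twob$ is surjective and $\{b_w \mid w\in W^c(\tilde{C}_n)\}$ is a $K$-basis of $\tl$, the images (still denoted $b_w$) span $\twob$. It therefore suffices to show that $b_w$ lies in the $K$-linear span of $\{b_{w'} \mid w'\in \pos\}$ for every $w\in W^c(\tilde{C}_n)\setminus\pos$. I would proceed by induction on the affine length of $w$. By Definition \ref{defi positive fully commutative elements}, a non-positive FC element admits a reduced expression containing a distinguished subword on which a defining relation of the quotient acts nontrivially; rewriting $b_w$ by means of this relation inside $\twob$ produces a $K$-combination of monomials indexed by elements that are either strictly shorter or already positive. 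The inductive hypothesis then closes the argument.

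\emph{Linear independence.} For this I would pass to the diagram algebra model of $\twob$ from \cite{de2009two}. Each positive FC element $w$ determines, through any of its reduced words, a decorated planar diagram $D_w$, and the crucial point is that $w\mapsto D_w$ is injective with linearly independent image. Here the normal form for FC elements of type $\tilde{C}$ established in \cite{al2017typec} supplies canonical reduced words; reading off their heaps yields an explicit dictionary between $\pos$ and a family of pairwise distinct diagrams. Since distinct reduced diagrams are linearly independent in the diagram algebra, the corresponding elements $b_w$ are linearly independent in $\twob$.

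The main obstacle is the linear independence step: spanning is essentially a controlled rewriting driven by the quotient relations, whereas independence requires a faithful combinatorial model of $\twob$ and a verification that the map $w\mapsto D_w$ from positive FC elements to diagrams is a bijection onto a linearly independent set. Pinning down this dictionary precisely---ensuring that the positivity condition exactly matches the diagrams surviving in the quotient, and that no winding around the boundaries or decoration collapses two distinct positive elements---is the technical heart of the proof.
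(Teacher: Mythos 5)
Your spanning argument is essentially the right idea, but two points need repair. First, the induction must be on Coxeter length, not affine length: applying $U_1U_0U_1=\kappa_L U_1$ to a negative element of affine length zero leaves the affine length unchanged while shortening the word, so affine length does not decrease along the rewriting. Second, the real content of the spanning step is hidden in the phrase ``produces a $K$-combination of monomials indexed by elements that are either strictly shorter or already positive'': one must check that the shortened word is again a reduced word of a fully commutative element, identify that element explicitly, and track the scalars (which can involve $\kappa_R$ as well as $\kappa_L$, since collapsing $\sigma_1\sigma_0\sigma_1$ inside a first-type element creates occurrences of $\sigma_{n-1}\sigma_n\sigma_{n-1}\sigma_n$). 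The paper does this via the classification of non-left-positive elements (Proposition \ref{proposition classification non-left positive}) and the explicit bar operator $w\mapsto\overline{w}$.

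The linear independence step is where your proposal genuinely diverges from the paper and where the gap lies. The paper never touches the diagram calculus: instead it proves (Theorem \ref{basisLp}) that the set $X=\{b_x-\kappa_L\overline{b_x}\}$ indexed by non-left-positive elements is a $K$-basis of the kernel ideal $Lp$. The key observations are that the map $\Phi$ fixing $b_w$ for left-positive $w$ and sending $b_x\mapsto b_x-\kappa_L\overline{b_x}$ otherwise is unitriangular for any length-compatible order (hence an isomorphism of $\tl$ onto itself), and --- the hardest part, a long case-by-case analysis --- that the span of $X$ is a two-sided ideal. Linear independence of $\{b_w\mid w\in W^{+c}(\atc)\}$ in the quotient is then automatic, and the whole procedure is repeated for the ideal $Rp$ to reach $\twob$. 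Your route through the diagram algebra of \cite{de2009two} would require (i) an independent proof that $\twob$ as presented by generators and relations is isomorphic to the diagram algebra --- a fact that is typically itself established by exhibiting a monomial basis of the presented algebra, so you risk circularity --- and (ii) the explicit bijection between the infinite set $\pos$ and the reduced decorated diagrams, which you correctly identify as the technical heart but do not supply. As written, the independence half of your argument is an announcement of what must be proved rather than a proof.
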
 

\begin{thmx}  \label{teo intro basis symplectic}
	The set $\{b_w \, | \, w\in \blobbed \}$ is a $K$-basis for $SB_n(q,Q,\kappa)$.
\end{thmx}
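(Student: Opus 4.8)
The plan is to split the basis statement into its two standard halves: the images of $\{b_w \mid w\in \blobbed\}$ span $\simp$, and these images are $K$-linearly independent. For the spanning half I would build directly on Theorem \ref{teo intro basis two boundary}. By the chain of surjections \eqref{intro surjective morphisms}, $\simp$ is a quotient of $\twob$, so the image of any spanning set of $\twob$ spans $\simp$; Theorem \ref{teo intro basis two boundary} furnishes such a set, namely $\{b_w \mid w\in \pos\}$. It therefore suffices to prove a reduction lemma: for every $w\in \pos\setminus\blobbed$, the image of $b_w$ in $\simp$ lies in the span of $\{b_v \mid v\in \blobbed\}$. Granting this, spanning follows and already yields $\dim_K \simp \le |\blobbed|$, so that linear independence reduces to the reverse inequality.

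The engine of the reduction is the single family of relations that distinguishes $\simp$ from $\twob$, namely the symplectic blob relation coupling the two boundary generators $b_{\sigma_0}$ and $b_{t_n}$ through the scalar $\kappa$. I would first extract from the normal form for fully commutative elements of $W(\atc)$ in \cite{al2017typec} a clean combinatorial characterisation of membership in $\blobbed$ (Definition \ref{defi blobbed fully commutative elements}): a positive fully commutative element should be blobbed precisely when its reduced word avoids the specific two-sided subword that the blob relation rewrites, equivalently when a suitable boundary-crossing or affine-length statistic stays below the threshold built into the definition. For $w\in \pos\setminus\blobbed$ this characterisation guarantees that a reduced expression of $w$ contains such a subword; applying the blob relation there rewrites $b_w$, modulo the relations already valid in $\twob$, as a $K$-linear combination of monomials $b_{w'}$ of strictly smaller statistic. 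Since full commutativity allows the rewriting to be carried out inside a canonical reduced word without producing new non-commuting obstructions, the statistic is well founded and the induction terminates with all indices in $\blobbed$. Verifying that each rewriting step genuinely lowers the statistic and never reintroduces a non-blobbed index is, I expect, the main technical obstacle.

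For linear independence I would match dimensions. The companion enumeration carried out in this paper, counting $\blobbed$ by affine length via the Catalan triangle, gives an explicit value for $|\blobbed|$, while $\simp$ has a $K$-dimension computable from its diagrammatic realisation in \cite{martin2007towers,green2012presentation}. Comparing the two and combining with the spanning bound $\dim_K \simp \le |\blobbed|$ forces $\dim_K \simp = |\blobbed|$, so the spanning set is automatically a basis. Because the tower of symplectic blob algebras is not faithful (Remark \ref{abouttowers}), I would prefer the more robust diagrammatic route: realise each $b_w$ with $w\in \blobbed$ as a reduced diagram in the diagram algebra isomorphic to $\simp$ and check that these diagrams are pairwise distinct and exhaust a diagram basis. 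This establishes independence directly and simultaneously reconfirms the dimension equality, without relying on an externally quoted dimension formula; either route then completes the proof.
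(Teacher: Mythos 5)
Your spanning argument follows the same路 route as the paper's: Section \ref{section blobbed and symplectic} reduces a non-blobbed $w$ to a shorter positive element by applying $IJI\mapsto\kappa I$ (resp.\ $JIJ\mapsto\kappa J$), exactly the well-founded rewriting you describe. What you correctly flag as ``the main technical obstacle'' is, however, where all the content lives, and your sketch does not supply it. The paper resolves it by introducing the grid $G(w)$ and the oblique normal form (Proposition \ref{obliques}), proving that containment of $IJI$ or $JIJ$ in $w$ is equivalent to containment of the corresponding grid (Proposition \ref{lemma blobbed if part}), and then showing (Lemma \ref{lemma two I forces all inthe middle}, Corollary \ref{coro oblique form}) that a non-$I$-blobbed positive element has oblique form $O_1\cdots O_r(IJ)^kIO_1'\cdots O_s'$ with a \emph{unique} maximal alternating block. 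This is what guarantees that one application of the relation produces a single monomial $\kappa\,b_{\bar w}$ with $\bar w$ again in $\pos$ and strictly shorter, so the induction terminates without reintroducing non-blobbed indices. Without an analogue of Corollary \ref{coro oblique form}, your claim that the rewriting ``never reintroduces a non-blobbed index'' is unsupported; note also that positivity of the output must be checked, since Theorem \ref{teo intro basis two boundary} only gives you monomials indexed by $\pos$.

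The genuine gap is in your independence half. The paper explicitly does not touch the diagram calculus, and for good reason: the identification of the presented algebra $\simp$ (at the specialized parameters \eqref{parameters specialization}) with the diagrammatic symplectic blob algebra is itself a nontrivial theorem with genericity hypotheses, and the count $|\blobbed|=p_n$ is \emph{derived} in Section \ref{section enumeration blobbed} as a consequence of Theorem \ref{teo intro basis symplectic} rather than matched against a literature formula --- so using ``the known dimension of $\simp$'' to prove independence is either circular or rests on an unverified external input. The paper instead gets independence for free from the same reduction data used for spanning: since $l(\bar w)<l(w)$, the map $\Phi$ sending $b_w\mapsto b_w-\kappa\overline{b_w}$ on non-blobbed indices (and fixing the rest) is unitriangular, hence an isomorphism; one then shows by the case analysis of Theorem \ref{basisLp} that the span of $Y=\{b_w-\kappa\overline{b_w}\}$ is a two-sided ideal, so $Y$ is a basis of $Ib$ and $\{b_w\mid w\in{}^I\!\blobbed\}$ descends to a basis of $\twob/Ib$; iterating with $Jb$ gives the result. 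If you want to salvage your route, you would need to prove the ideal-closure statement anyway (to get the upper bound on the ideal) --- at which point the diagrammatic detour buys you nothing.
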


\begin{defi}
 Let $w \in W^c (\tilde C_{n})$. We define the {\rm affine length} of $w$  to be the number of times  $t_n$ occurs  in a (any)  reduced expression of $w$. We denote  it by $L(w)$. 
\end{defi}

Viewing $W(\tilde{C}_{n}) $ as an ``affinization'' of $W(B_n)$ allows us to see how the affine length  can be a powerful tool in studying the behaviour of FC elements, since we have a finite number of elements of a given affine length and since we know exactly how to get from affine length $k$ to affine length $k+1$ via the normal form given in \cite{al2017typec}.\\

 Motivated by Theorem \ref{teo intro basis two boundary} and Theorem \ref{teo intro basis symplectic} we undertake the task of enumerating the elements of $\pos$ and $\blobbed $ according to their affine length. Let us make precise the above sentence. Let $n$ and $s$ be integers such that $n\geq 1$ and $s\geq 0$. We define $A_n^s =\{ w \in W^{+c+}(\tilde{C}_n) \, |\, L(w)=s \} $ and $B_n^s =\{ w \in W^c_b(\tilde{C}_n) \, |\, L(w) =s  \}$. Furthermore, we set $a_{n}^s=|A_n^s|$ and $b_n^s = |B_n^s |$. Then, our goal is to find closed formulas for the numbers $a_n^s $ and $ b_n^s $. 

\medskip
There are two main ingredients in our counting. On the one hand, the set $W^{+c+}(\tilde{C}_n)$ inherits naturally the normal form from $W^c( \atc)$. Fortunately, the normal form for the elements of  $W^{+c+}(\tilde{C}_n)$ simplifies drastically. This simplification allows us to introduce a graphical interpretation for any element $w\in W^{+c+}(\tilde{C}_n)$, which we call the grid of $w$ and denote by $G(w)$ (see Definition \ref{defi grid}). Then, we focus on enumerating grids rather than elements. In other words, we transform an algebraic problem into a combinatorial one. Since $\blobbed \subset \pos $ the same strategy applies for the enumeration of the elements of $\blobbed$ as well. On the other hand,  we introduce an array of numbers, which we call the \emph{Blobbed Catalan triangle}. This name is justified since it corresponds to a certain generalization of Forder's Catalan triangle \cite{forder1961some}. The blobbed Catalan triangle is an infinite matrix formed by non negative integers $\{C_{i,j}\}_{i,j\geq -1}$. The definition and properties of the blobbed Catalan triangle are given in Section \ref{section catalan combinatorics}, where in particular we provide a closed formula for the numbers $C_{i,j}$ in terms of binomial coefficients (see Theorem \ref{teo closed formula for Cs}). By combining these two ingredients we eventually obtain the following.

\begin{thmx}  \label{teo intro enumeration positive}
	Let $n$ and $s$ be integers such that $n\geq 1$ and $s\geq 0$. Then, we have $a_{n}^s=C_{2n,2s}$. 
\end{thmx}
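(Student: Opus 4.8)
The plan is to prove Theorem~C by turning the enumeration of $A_n^s$ into a lattice-path count controlled by the blobbed Catalan triangle. The first step is to invoke the grid bijection $w\mapsto G(w)$ of Definition~\ref{defi grid}. Because the normal form of \cite{al2017typec} restricted to $\pos$ simplifies drastically, each $w\in A_n^s$ is recorded faithfully by its grid, and two positive fully commutative elements coincide iff their grids do; hence $a_n^s$ equals the number of grids coming from elements of $\pos$ of affine length exactly $s$ in rank $n$. I would first make explicit the image of $A_n^s$ under the grid map, that is, identify the combinatorial region in which $G(w)$ lives together with the monotonicity and positivity constraints that characterise which grids actually occur. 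The point to pin down here is that the \emph{width} of this region is governed by $n$ and its \emph{depth} by the affine length $s$, with each unit of $n$ and each unit of $s$ contributing two elementary steps — this is exactly the geometric source of the doubled indices $2n$ and $2s$ appearing in $C_{2n,2s}$.

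Second, with the region and constraints fixed, I would count the admissible grids. There are two interchangeable routes. The direct route is to exhibit a bijection between the grids attached to $A_n^s$ and the family of monotone lattice paths enumerated by $C_{2n,2s}$, and then read off the count from the closed formula of Theorem~\ref{teo closed formula for Cs}. The inductive route is to set up a recursion for $a_n^s$ by decomposing a grid along a canonical last feature — for instance the final diagonal strand, corresponding to the last occurrence of $t_n$, or the outermost column of the region. Since the normal form tells us precisely how to pass from affine length $s$ to affine length $s+1$, deleting this feature returns a grid of the same type with strictly smaller parameters, yielding a relation of ballot type, of the shape $a_n^s = a_{n}^{s-1}+a_{n-1}^{s}$ up to corrections coming from strands that meet a wall of the grid.

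The third step is to reconcile this with the blobbed Catalan triangle. From the section on Catalan combinatorics I would extract the defining recursion of the numbers $C_{i,j}$ together with the boundary values $C_{i,-1}$ and $C_{-1,j}$; the whole purpose of the \emph{blobbed} modification of Forder's triangle is that these non-standard boundary entries encode the two constraints coming from the two ends $\sigma_0$ and $t_n=\sigma_n$ of the Coxeter diagram of $\atc$. Because incrementing $n$ or $s$ by one shifts the $C$-indices by two, I would verify that two successive applications of the elementary recursion for $C_{i,j}$ reproduce exactly the grid recursion for $a_n^s$, and then match the base cases: the value $a_n^0$ (positive fully commutative elements with no $t_n$, which collapse to the finite type $B_n$ situation) and the value $a_1^s$ in the smallest rank. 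A double induction on $(n,s)$ then gives $a_n^s=C_{2n,2s}$.

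The hard part, I expect, is the second step together with the boundary bookkeeping of the third. Identifying a feature whose removal decreases the parameters cleanly is delicate, and the real difficulty is controlling the corrections produced when a strand touches one of the two walls of the grid: these must match the modified boundary entries $C_{i,-1}$, $C_{-1,j}$ of the blobbed triangle rather than the entries of the ordinary Catalan triangle, and a naive decomposition will either overcount wall-touching configurations or terminate at the wrong base case. Getting the positivity condition defining $\pos$ to interact correctly with both walls, so that the doubled-index recursion closes up exactly, is where the argument will require the most care.
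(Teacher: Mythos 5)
Your outline has the right global shape---grids, induction, the observation that one unit of $n$ or $s$ must correspond to \emph{two} applications of the elementary recursion $C_{i,j}=C_{i-1,j-1}+C_{i-1,j+1}$, and a base case at $s=0$ that reduces to type $B_n$---but the one ingredient that actually carries the proof is missing, and the candidate recursion you float in its place is false. You propose a ballot-type relation of the shape $a_n^s=a_n^{s-1}+a_{n-1}^s$ ``up to corrections.'' If the theorem is true this would require $C_{2n,2s}=C_{2n,2s-2}+C_{2n-2,2s}$, which already fails at $(n,s)=(2,1)$: $C_{4,2}=14$ while $C_{4,0}+C_{2,2}=6+4=10$. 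No boundary correction of the kind you describe repairs this, because the true relation does not decrease $n$ at fixed $s$ at all; it goes in the opposite direction. The identity the paper proves is
\begin{equation*}
a_{n+1}^{s-1}\;=\;a_n^{s-2}+2\,a_n^{s-1}+a_n^{s}\qquad (s\geq 2),
\end{equation*}
obtained by classifying the grids of elements of $A_{n+1}^{s-1}$ into four types according to the configuration of dots in the $n$-th row (relative to the $(n+1)$-th row) and observing that deleting the top row identifies these four classes with $A_n^{s-2}$, $A_n^{s-1}$ (twice), and $A_n^{s}$. One then \emph{solves} for $a_n^s$ in terms of quantities of strictly smaller affine length, and the right-hand side telescopes against $C_{2(n+1),2(s-1)}=C_{2n,2s-4}+2C_{2n,2s-2}+C_{2n,2s}$. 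Without this specific decomposition (or an equivalent one) your step two has no content, and your step three has nothing to match against.

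Two smaller gaps: the case $s=1$ does not follow from the general recursion (which needs $s\geq 2$) and requires its own two-class decomposition $a_{n+1}^0=a_n^0+a_n^1$ according to whether $\sigma_n$ occurs; and the base case $s=0$ is not merely ``the type $B_n$ situation'' but the specific count that the number of positive fully commutative elements of $W^c(B_n)$ equals $\binom{2n}{n}=C_{2n,0}$, which itself needs the bijection between negative elements and strictly positive elements other than the identity, combined with Stembridge's formula $|W^c(B_n)|=(n+2)C_n-1$. Your instinct that the doubled indices come from a two-step application of the triangle recursion is correct and is exactly how the paper's computation closes up, but as written the proposal does not yet contain a proof.
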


\begin{thmx}\label{teo intro enumeration blobbed}
	 Let $D_n^s= A_n^s - B_n^s$ and set $d_{n}^s =|D_n^s|= a_n^s -b_n^s$. Then, there is a closed formula
	 for the numbers $d_n^s$ (see Section \ref{section enumeration blobbed} for the explicit formulas) which only  involves coefficients occurring in the blobbed Catalan triangle. 
	  Therefore, we have a closed formula for $b_n^s$. In particular, we obtain $ b_n^s=0 $ for $s > n$. Finally, the number 
\begin{equation}
	p_n:= \sum_{s=0}^n b_n^s = |\blobbed |
\end{equation}
gives the dimension of $ SB_n(q,Q,\kappa)$. 
\end{thmx}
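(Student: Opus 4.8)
The plan is to reduce the enumeration of blobbed elements to that of positive elements, which is already settled by Theorem \ref{teo intro enumeration positive}, together with an explicit count of the complementary set $D_n^s$. Since $\blobbed \subset \pos$ we have $B_n^s \subseteq A_n^s$ for all $n,s$, and because $D_n^s$ is the set difference $A_n^s - B_n^s$ this gives $b_n^s = a_n^s - d_n^s$; by Theorem \ref{teo intro enumeration positive} this reads $b_n^s = C_{2n,2s} - d_n^s$. Hence everything hinges on producing a closed formula for $d_n^s$, the number of positive fully commutative elements of affine length $s$ that fail the blobbed condition of Definition \ref{defi blobbed fully commutative elements}, and then invoking the binomial formula of Theorem \ref{teo closed formula for Cs} for the $C_{2n,2s}$ term.

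First I would transport the problem to the combinatorial model. Using the grid $G(w)$ of Definition \ref{defi grid}, the local conditions distinguishing blobbed from merely positive elements should become an explicit forbidden-configuration statement about grids: an element lies in $D_n^s$ precisely when its grid exhibits the pattern arising from the failure of the blob relation at the $t_n$-boundary (intuitively, a maximal ``winding'' that the symplectic parameter $\kappa$ collapses). The task is then to enumerate exactly those grids of affine length $s$. I would decompose such a grid along its interaction with the boundary, peel off the forbidden block, and match the remaining data with a sub-triangle of the blobbed Catalan array, so that $d_n^s$ emerges as a sum of coefficients $C_{i,j}$ (most likely with a small case distinction according to the position of $s$ relative to $n$, consistent with there being several explicit formulas). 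Substituting into $b_n^s = C_{2n,2s} - d_n^s$ then yields the desired closed expression entirely in terms of blobbed Catalan entries.

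Next I would establish the vanishing $b_n^s = 0$ for $s > n$. The point is that affine length exceeding $n$ should force every positive grid to contain the forbidden winding block, so that $D_n^s = A_n^s$, whence $d_n^s = a_n^s = C_{2n,2s}$ and $b_n^s = 0$. Concretely, I would argue that a blobbed element can employ the generator $t_n$ at most $n$ times before its grid is obliged to repeat a boundary interaction that the blobbed condition prohibits; this is precisely where the finiteness of $\blobbed$ asserted in the introduction is made quantitative.

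Finally, the dimension statement is immediate once the enumeration is in hand: Theorem \ref{teo intro basis symplectic} exhibits $\{ b_w \mid w \in \blobbed \}$ as a $K$-basis of $SB_n(q,Q,\kappa)$, so $\dim_K SB_n(q,Q,\kappa) = |\blobbed| = \sum_{s \geq 0} b_n^s$, and the vanishing truncates this sum at $s=n$, giving $p_n = \sum_{s=0}^n b_n^s$. The main obstacle I anticipate is the combinatorial characterization in the second paragraph, namely translating the blobbed condition faithfully into a forbidden pattern on grids and setting up a bijection with the correct sub-triangle of the blobbed Catalan array, so that $d_n^s$ assembles cleanly into entries $C_{i,j}$ rather than an unwieldy alternating sum; the vanishing and the dimension formula are then essentially bookkeeping on top of that characterization.
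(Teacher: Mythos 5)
Your overall architecture agrees with the paper: the identity $b_n^s=a_n^s-d_n^s$ combined with Theorem \ref{teo intro enumeration positive}, the passage to grids so that (by Proposition \ref{lemma blobbed if part}) membership in $D_n^s$ becomes containment of $G(w_n^I)$ or $G(w_n^J)$, the vanishing $b_n^s=0$ for $s>n$ because a grid with more than $n$ dots in the $n$-th row is forced to contain $G(w_n^I)$, and the dimension statement as an immediate corollary of Theorem \ref{teo intro basis symplectic}. However, the substance of the theorem is the closed formula for $d_n^s$, and there your sketch has a genuine gap on two counts. First, your expectation that $d_n^s$ ``assembles into a sum of coefficients $C_{i,j}$'' by matching the leftover data with a sub-triangle is structurally wrong: the correct answer is \emph{quadratic} in such coefficients. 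Once an occurrence of $G(w_n^I)$ is pinned down, the admissible additions of dots to its left and to its right are independent, so the count factors as a product (e.g.\ $d_n^1=(i_n)^2$ for $n$ even, and for general $s$ one gets sums of products $i_n^{k}i_n^{s-1-k}$, $j_n^{k}j_n^{s-2-k}$, $i_n^{k}j_n^{s-2-k}$). A single sub-triangle of entries cannot produce this.

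Second, ``peel off the forbidden block'' does not survive contact with the main difficulty, which the paper states explicitly: an element of $D_n^s$ may contain $G(w_n^I)$ or $G(w_n^J)$ many times, and by Lemma \ref{lemma two I forces all inthe middle} occurrences of $I$ and $J$ interlock into strings $(IJ)^kI$, so naive peeling overcounts badly. The paper's resolution is to index occurrences by the labelled dots $P_1,\dots,P_s$ in the $n$-th row, define the sets $D_n^s(I,u)$ and $D_n^s(J,u)$ of elements with an occurrence \emph{at position} $u$, and write $D_n^s$ as the explicit disjoint union of set differences in (\ref{super decomposition}); this is what generates the correction terms $-2\sum_k i_n^k j_n^{s-2-k}$. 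On top of that one still needs the auxiliary counts $i_n^t$, $j_n^t$, the rotation-plus-reflection bijection showing left and right extensions are equinumerous (Lemma \ref{BiyectionI} and Lemma \ref{lemma number of words i and j with a affine length given}), and the inductive identification $i_n^t=C_{n,2t}$ (resp.\ $C_{n,2t+1}$) of Lemma \ref{lemma number of words i and j with a affine length given A}. None of these ingredients is present or replaced by an alternative in your proposal, so the closed formula --- the actual content of the theorem --- is not reached.
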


This article is structured as follows. In Section \ref{section algebras} we introduce the algebras  under study. In Section \ref{section positive and twoboundary} we define positive FC elements and prove Theorem \ref{teo intro basis two boundary}. In Section \ref{section blobbed and symplectic} we define blobbed FC elements and prove Theorem \ref{teo intro basis symplectic}. In Section \ref{section catalan combinatorics} we introduce the blobbed Catalan triangle and study the properties of the numbers occurring in it. In Section \ref{section enumeration positive} we prove Theorem \ref{teo intro enumeration positive}. Finally, in Section \ref{section enumeration blobbed} we prove Theorem \ref{teo intro enumeration blobbed}.

\section*{Acknowledgements}
Sadek Al Harbat was supported by Fondecyt Postdoctoral grant 3170544. David Plaza was partially supported by FONDECYT project 11160154 and the Inserci\'on en la Academia project PAI-Conicyt 79150016. 

\section{Definition of algebras} \label{section algebras}
In this section we define the  algebras which we study . Hereinafter, $K$  will denote an algebraically closed field. Let $n$ be a positive integer.  Let $W(\tilde{C}_{n}) $ be the affine Coxeter group of type $\tilde{C}_n$.  
\begin{defi} \label{defi Affine C Hecke algebra} 
	Let $q,Q\in K^\times$. The Hecke algebra of type $ \atc $, $H_n(q,Q)$, is the associative  unital $K$-algebra with generators 
	$\{ g_0,g_1,\ldots ,g_{n-1},g_n \}$ subject to the relations
	\begin{equation}  \label{relations in Hecke affine Type C}
	\begin{array}{rlccl}
	g_i^2 & =(Q-1) g_i+Q, &   &   &  \mbox{if } i=0 \mbox{ or } i=n;\\
			g_i^2 & =(q-1) g_i+q, &   &   &  \mbox{if } 0< i <n;\\
			g_ig_j & =g_jg_i, & & &\mbox{if } |i-j|>1; \\
			g_ig_jg_i& = g_jg_ig_j, & & & \mbox{if } |i-j|=1 \mbox{ and } 0<i,j<n;\\
			g_ig_jg_ig_j& = g_jg_ig_jg_i, & & & \mbox{if } \{i,j\}=\{0,1\} \mbox{ or }  \{i,j\}=\{n-1,n\} .
	\end{array}
\end{equation}
\end{defi}

For $x,y$ in a given ring with  identity,  we define:
$$\begin{aligned}
V(x,y) &= xyx+xy+yx+x+y+1, \\ 
  Z(x,y)   &=  xyxy+xyx+yxy+xy+yx+x+y+1.
\end{aligned}
$$

\begin{defi} \label{defi affine C TL algebra} 
	Let $q,Q\in K^\times$. The Temperley-Lieb algebra of type $ \atc $, $ \tl $, is the associative unital $K$-algebra with generators 
	$\{ g_0,g_1,\ldots ,g_{n-1},g_n \}$ subject to the relations as in \eqref{relations in Hecke affine Type C} together with
	\begin{equation}
		Z(g_0,g_1) = Z(g_{n-1},g_n) =V(g_i,g_{i+1})  =0,   \mbox{ for } 0< i <n.
\end{equation}
\end{defi}

By setting $U_0=\frac{1}{\sqrt{Q}} (g_0+1)$, $U_n=\frac{1}{\sqrt{Q}} (g_n+1)$ and $U_i=\frac{1}{\sqrt{q}} (g_i+1)$ for $0<i<n$, we obtain another presentation for $\tl $ with generators $\{U_0,U_1, \ldots , U_n\}$ subject to the relations
\begin{equation} \label{relations temperley lieb five parameters}
	\begin{array}{rlccl}
		U_0^2 &= \delta_L U_0; & & & \\
		U_n^2 & =\delta_R U_n ;& & & \\
			U_i^2 & =\delta U_i, &   &   &  \mbox{if } 0< i <n;\\
			U_iU_j & =U_jU_i, & & &\mbox{if } |i-j|>1; \\
			U_iU_jU_i& = U_i, & & & \mbox{if } |i-j|=1 \mbox{ and } 0<i,j<n;\\
			U_iU_jU_iU_j& = k_LU_iU_j, & & & \mbox{if } \{i,j\}=\{0,1\} ;\\
	U_iU_jU_iU_j& = k_RU_iU_j, & & & \mbox{if } \{i,j\}=\{n-1,n\} ;
	\end{array}
\end{equation}
where 
\begin{equation}  \label{parameters specialization}
	\displaystyle     \delta_L = \delta_R =\frac{1+Q}{\sqrt{Q}}, \qquad  \delta = \frac{1+q}{\sqrt{q}}, \qquad  k_L =k_R= \frac{q+Q}{\sqrt{qQ}}.
\end{equation}

In order to describe a basis for $\tl$ we need to recall the following.
   
   \begin{defi}\rm
			In a Coxeter system $(W,S)$ of graph $\Gamma$, elements for which one can pass from any reduced expression to any other one only by applying commutation relations are called {\rm fully commutative elements}. We denote  by $W^{c}(\Gamma)$ the set of fully commutative elements in $ W(\Gamma)$. 
   \end{defi}

 \begin{defi}\rm \label{defin monomial basis}
 	Given $w$ in $W^c(\tilde C_{n}) $ and $w= \sigma_{i_1} \dots \sigma_{i_r} $  any reduced expression of $w$, we set $b_w=  U_{i_1} \dots U_{i_r}$.
It is well-known that  $b_w$ is well-defined and that the set $\{ b_w \ | \ w \in W^c(\tilde C_{n})\}$ is  a basis for $\tl $, which is called the  monomial  basis.
 \end{defi}  

 \begin{rem}\rm  
In the literature (for example \cite{ernst2012diagram})  we see that  $\tl  $  was defined in the equal parameters case, i.e., when $q=Q$. Sometimes it was defined with a weight function (for example \cite{graham1995modular}). Now since the $K$-vector space structure of the algebras $TL\atc (q,q)$ and  $\tl $ is the same, we use the  monomial basis for $\tl $ exactly in the same way in which it was used in the equal parameters case.
 \end{rem}

\begin{defi} \rm
Let $\delta, \delta_L, \delta_R, \kappa_L , \kappa_R \in K^\times$. The two-boundary Temperley-Lieb algebra, which we denote by $2BTL_n(\delta , \delta_L, \delta_R, \kappa_L , \kappa_R)$, is the associative unital $K$-algebra with generators $\{U_0,U_1, \ldots , U_n\}$ subject to the  relations  \eqref{relations temperley lieb five parameters} together with 
\begin{equation} \label{relation to pass from TL to two-boundary}
U_1U_0U_1 = \kappa_LU_1 \qquad \mbox{and} \qquad    	U_{n-1}U_nU_{n-1} = \kappa_RU_{n-1}.
\end{equation}
\end{defi}

By specializing  the parameters $\delta, \delta_L, \delta_R, \kappa_L$ and $  \kappa_R $ as in \eqref{parameters specialization} we obtain a two-parameter version of the two-boundary Temperley-Lieb algebra which we denote by $\twob $. In this setting, we see that  $\twob $ is the quotient of $ \tl $ by the ideal generated by

\begin{equation}
	U_1U_0U_1 - \kappa_L U_1 \qquad \mbox{and} \qquad    	U_{n-1}U_nU_{n-1} - \kappa_R U_{n-1}.
\end{equation}

\begin{defi} \rm
Let $\delta, \delta_L, \delta_R, \kappa_L , \kappa_R, \kappa \in K^\times$. The symplectic blob algebra, which we denote by  $SB_n(\delta , \delta_L, \delta_R, \kappa_L , \kappa_R,\kappa)$, is the associative unital  $K$-algebra with generators $\{U_0,U_1, \ldots , U_n\}$ subject to the relations \eqref{relations temperley lieb five parameters} and \eqref{relation to pass from TL to two-boundary}, together with 
\begin{equation} 
IJI = \kappa I \qquad \mbox{and} \qquad    	JIJ = \kappa J,
\end{equation}
where 
\begin{equation}
	I = \left\{  \begin{array}{ll}
	U_1U_3\cdots U_{n-1},  	& \mbox{if } n \mbox{ is even;}     \\
	U_1U_3\cdots U_{n},  	& \mbox{if } n \mbox{ is odd;}     \\
	\end{array}  \right.
\qquad \mbox{and}\qquad 
J = \left\{  \begin{array}{ll}
	U_0U_2\cdots U_{n},  	& \mbox{if } n \mbox{ is even;}     \\
	U_0U_2\cdots U_{n-1},  	& \mbox{if } n \mbox{ is odd.}     \\
	\end{array}  \right.
\end{equation}
\end{defi}

By specializing  the parameters $\delta, \delta_L, \delta_R, \kappa_L$ and $  \kappa_R $ as in \eqref{parameters specialization} we obtain a three-parameter version of the symplectic blob algebra which we denote by $\simp  $. In this setting, we see that  $\simp $ is  the quotient of $ \twob $ by the ideal generated by

\begin{equation} 
IJI - \kappa I \qquad \mbox{and} \qquad    	JIJ - \kappa J.\\
\end{equation}

With the exception of the Hecke algebra of type $\atc$, all the algebras defined in this section have a diagram calculus given by certain generalizations of classical Temperley-Lieb diagrams (see \cite{martin2007towers,de2009two,ernst2012diagram}). In this paper we do not touch the diagrammatic setting and for this reason we do not recall it here.

\begin{rem} \label{abouttowers}\rm
In \cite[\S 6]{al2017typec} the first author has  defined the morphism $R_n: TL\tilde{C}_{n}(q,Q) \longrightarrow TL\tilde{C}_{n+1}(q,Q)$ and has  shown  the faithfulness of this morphism. We can easily see that the composition with the quotient morphism 
onto $2BTL_{n+1}(q,Q)$  factors through  $2BTL_{n}(q,Q)$, so   $R_n$ gives rise to  a morphism of algebras $\bar{R}_n: 2BTL_n(q,Q) \longrightarrow 2BTL_{n+1}(q,Q)$. Since the maps $I,J$ defined in  \cite[Definition 5.1]{al2017typec}    send positive elements -see below- to positive elements (with a slight change concerning the definition of $I$ and $J$ on elements with affine length $1$), the faithfulness of $\bar{R}_n$ follows. The importance of this morphism comes from the fact that it comes from a morphism of $\tilde{C}$-type braid groups, thus the resulting  faithful tower of two-boundary T-L algebras encodes most-likely topological data for example. On the other hand the morphism $\bar{R}_n$ fails to 
factor through the symplectic blob algebras, so we do not get in this way a tower of symplectic blob algebras, unfortunately. 
\end{rem}


\section{Positive fully commutative elements and the two-boundary Temperley-Lieb algebra }
\label{section positive and twoboundary}  

The purpose of this section is to determine a (monomial) basis for $\twob$. Concretely, we find a subset of the monomial basis of $\tl $ whose elements are mapped under the canonical projection to a basis for $\twob$. To this end, we begin by recalling the classification of fully commutative elements in type $\atc$.

 \begin{defi}\label{defin fully commutative elements} \rm
 Let $w \in W (\tilde C_{n})$. We define the \emph{ affine length}  of $w$  to be the number of times  $t_{n}$ occurs   in a (any) 
 reduced expression of $w$. We denote it by $L(w)$. In particular, we identify  $  \{ w \in W(\tilde C_{n}) \, | \, L(w) =0 \}$   with the Coxeter group $W(B_{n})$.
\end{defi}

Before recalling the classification we write some suitable notations. We define
\begin{equation}  \label{first parentheses}
	\begin{array}{rlll}
	     [i,j]  &= \sigma_i \sigma_{i+1} \dots \sigma_j , & & \text{ for } 0\leq i\leq j < n  \quad   \mbox{ and }   \quad   [ n, n-1 ]   = 1;  \\
	    \lbrack   -i, j \rbrack  & = \sigma_i \sigma_{i-1} \dots  \sigma_1 \sigma_0\sigma_{1}  \dots \sigma_{j-1}  \sigma_j , &  &  \mbox{ for } 1\leq i\leq j < n    \quad   \mbox{ and }   \quad      [0, -1]   = 1 .
\end{array}  
\end{equation}

The classification of fully commutative elements of affine length zero in type $\atc$  (or equivalently  fully commutative elements in type $B_n$) was given by Stembridge  \cite[Theorem 5.1]{stembridge1997some}. We recall it in the following result.

\begin{teo}\label{1_2}
  Let  $w$ be a fully commutative element in $W^c(B_n)$ different from the identity. Then, $w$ can be written in a unique way as a reduced word of the form
   \begin{equation}\label{Stembridge}
[ l_1, g_1 ] 	[  l_2, g_2 ]  \dots  [l_r , g_r ]	
 \end{equation}
with $n> g_1 > \dots > g_r \ge 0$ and 
$|l_t| \le g_t$ for $1\le t \le r$, such that  either

\begin{itemize}
\item[(a)] $ l_1 > \dots > l_{r-1}  > l_r > 0 $;
\item[(b)] $ l_1 > \dots > l_s  = l_{s+1} = \cdots = l_r = 0  $ for some  $s \le r$; or 
\item[(c)] $ l_1 > \dots > l_{r-1}  > -l_r > 0 $. 
\end{itemize}
\end{teo}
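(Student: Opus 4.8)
The plan is to prove existence, uniqueness, and reducedness of the normal form simultaneously by a recursion on $n$ organized around the end node $\sigma_{n-1}$, using throughout the subword characterization of full commutativity. Two tools are recorded first. (i) In any Coxeter system an element is fully commutative if and only if none of its reduced expressions contains a factor $\underbrace{sts\cdots}_{m(s,t)}$ with $3\le m(s,t)<\infty$; in $W(B_n)$ the relevant pairs are the braid-$3$ pairs $\{\sigma_i,\sigma_{i+1}\}$ for $1\le i\le n-2$ and the single braid-$4$ pair $\{\sigma_0,\sigma_1\}$, so full commutativity means avoiding $\sigma_i\sigma_{i+1}\sigma_i$, $\sigma_{i+1}\sigma_i\sigma_{i+1}$ and $\sigma_0\sigma_1\sigma_0\sigma_1$, $\sigma_1\sigma_0\sigma_1\sigma_0$. (ii) I would use the realization of $W(B_n)$ as signed permutations, so that reducedness of any explicit word is certified by its inversion/sign statistic, and $W(B_{n-1})=\langle\sigma_0,\dots,\sigma_{n-2}\rangle$ is a standard parabolic subgroup.

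For the recursion, fix $w\neq 1$. If $\sigma_{n-1}$ does not occur in $w$, then $w\in W^c(B_{n-1})$ and the induction hypothesis applies directly. If $\sigma_{n-1}$ occurs, I would take the length-additive factorization $w=x\cdot w''$ attached to the quotient $W(B_n)/W(B_{n-1})$, where $x$ is the minimal representative of $wW(B_{n-1})$ and $w''\in W(B_{n-1})$. Since $\sigma_{n-1}$ is joined only to $\sigma_{n-2}$, there are exactly $2n$ such minimal representatives, and a direct inspection shows they are precisely $1$ together with the single blocks $[\,l_1,n-1\,]$ for $-(n-1)\le l_1\le n-1$ (the negative values giving the words that bounce off $\sigma_0$). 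Hence $x=[\,l_1,n-1\,]$ is one block with $g_1=n-1$, and $w''$, being a factor of an FC element, is itself FC; applying the induction hypothesis to $w''$ and concatenating yields the required shape. Uniqueness of the parabolic factorization, together with uniqueness of the block $x$ and of the recursively obtained tail, gives uniqueness of the whole expression. The constraint $g_1>g_2$ is automatic since the top index of $w''$ is at most $n-2$, while the inequalities $|l_t|\le g_t$ and the trichotomy among the $l_t$ are exactly the conditions forcing $x\cdot w''$ to remain reduced and fully commutative: block against block, a length-$3$ alternation $\sigma_k\sigma_{k+1}\sigma_k$ can never be completed because the strictly decreasing tops push the shared indices apart through commutations.

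The heart of the argument, and the step I expect to be the main obstacle, is the behaviour at the braid-$4$ node $\sigma_0$, which is precisely what separates type $B$ from the purely type-$A$ classification. Away from $\sigma_0$ the generator of largest index occurring in $w$ is forced to occur exactly once (two occurrences would, after commutations, produce either an unreduced $\sigma_g\sigma_g$ or a forbidden $\sigma_g\sigma_{g-1}\sigma_g$), which drives the recursion smoothly down to the base $W(B_2)=\langle\sigma_0,\sigma_1\rangle$. There, however, $m(\sigma_0,\sigma_1)=4$ makes both $\sigma_1\sigma_0\sigma_1$ and $\sigma_0\sigma_1\sigma_0$ reduced and fully commutative, so $\sigma_0$ and $\sigma_1$ may recur, and one must control how their occurrences interleave. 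I would organize this by tracking the occurrences of $\sigma_0$ across the recursion: their absence lands one in case (a) with $l_1>\dots>l_r>0$; several blocks bottoming out at index $0$ give case (b) with $l_1>\dots>l_s=\dots=l_r=0$; and a single block passing through $\sigma_0$ produces the $V$-shaped last block of case (c) with $l_1>\dots>l_{r-1}>-l_r>0$.

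The delicate points to verify are then exactly: that these three possibilities are exhaustive and mutually exclusive; that the stated inequalities among the $l_t$ coincide with the reduced-and-FC conditions on the concatenation $x\cdot w''$; and that no further mixing (for instance a negative $l_t$ in a non-final position, or a return to a positive $l_t$ after reaching $0$) can occur without creating one of the forbidden alternating patterns at $\{\sigma_0,\sigma_1\}$. Each of these is a finite commutation/heap check localized near the two special nodes, and I would carry them out by pushing the offending generators together using the commutation relations and exhibiting the resulting braid factor, which is where the $m(\sigma_0,\sigma_1)=4$ relation does all the work.
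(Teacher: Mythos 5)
You should first be aware that the paper does not prove this statement at all: it is quoted verbatim from Stembridge (Theorem 5.1 of \emph{Some combinatorial aspects of reduced words in finite Coxeter groups}), so there is no in-paper argument to compare against. Judged on its own terms, your strategy is sound and standard: induct on $n$ via the parabolic quotient $W(B_n)/W(B_{n-1})$, observe that the minimal coset representatives are exactly $1$ and the candidate first blocks $[\,l_1,n-1\,]$ with $-(n-1)\le l_1\le n-1$, peel off the first block, apply the induction hypothesis to the tail in $W(B_{n-1})$, and use Stembridge's subword criterion to decide which concatenations remain fully commutative. Uniqueness does follow from uniqueness of the length-additive parabolic factorization plus induction, and the signed-permutation model does certify reducedness.

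The gap is that the entire substance of the theorem is deferred rather than proved. The statement being established is precisely that $[\,l_1,n-1\,]\cdot[\,l_2,g_2\,]\cdots[\,l_r,g_r\,]$ is fully commutative if and only if the $l_t$ satisfy one of (a), (b), (c) together with $|l_t|\le g_t$, and your proposal asserts this equivalence and postpones it to ``a finite commutation/heap check localized near the two special nodes'' that is never carried out. None of the individual implications is verified: that $0<l_{t+1}$ and $l_t\le l_{t+1}$ forces a factor $\sigma_k\sigma_{k+1}\sigma_k$ after commutations; that a negative $l_t$ in a non-final position, or $-l_r\ge l_{r-1}$, produces the alternating word $\sigma_1\sigma_0\sigma_1\sigma_0$; that after a block with $l_t=0$ every later block must also have $l_{t'}=0$; and conversely that each admissible pattern genuinely avoids all braid factors. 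Since the check at the braid-$4$ node is exactly what distinguishes type $B$ from type $A$, omitting it omits the theorem. A secondary inaccuracy: your parenthetical claim that the generator of largest index occurs exactly once is false as stated --- the chain-descent argument you sketch terminates at $\sigma_0$, where $\sigma_1\sigma_0\sigma_1$ is not a braid factor, and this is exactly how the blocks $[-g,g]=\sigma_g\cdots\sigma_1\sigma_0\sigma_1\cdots\sigma_g$ escape; this does not break your coset-representative route (since $[-(n-1),n-1]$ is itself a minimal representative containing $\sigma_{n-1}$ twice), but the sentence as written would mislead a reader about why the recursion works.
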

 
Hereinafter, we call elements of $W^c(B_{n})$ satisfying (c) in Theorem \ref{1_2} negative elements. We call  positive element any $w \in W^c(B_{n})$ which is not negative. We also call elements satisfying (a) in Theorem \ref{1_2} strictly positive elements. Finally, we consider the identity as a strictly positive element. 

\begin{lem}  \label{lemma counting psotitive fully commu in type B}
The number of positive elements in $W^c(B_{n})$ is $\binom{2n}{n} $.
\end{lem}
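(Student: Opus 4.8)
The plan is to feed Stembridge's normal form (Theorem \ref{1_2}) into a purely enumerative argument. By definition an element of $W^c(B_n)$ is positive exactly when it is the identity, or its unique reduced form $[l_1,g_1][l_2,g_2]\cdots[l_r,g_r]$ falls under case (a) or case (b); equivalently, when all of its bottoms satisfy $l_t\ge 0$ (only case (c) has $l_r<0$). First I would record the combinatorial shape of this data. The tops satisfy $n>g_1>\cdots>g_r\ge 0$ and the bottoms satisfy $0\le l_t\le g_t$, and the sign pattern of cases (a)/(b) forces the blocks with a \emph{strictly positive} bottom to occupy the $p$ largest tops, with strictly decreasing bottoms $l_1>\cdots>l_p\ge 1$, while the remaining $r-p$ blocks are ``zero blocks'' $[0,g]$ sitting on the smallest tops. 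Hence a positive element is precisely a threshold-separated datum: a strictly decreasing chain of cells $(g_t,l_t)$ with $1\le l_t\le g_t\le n-1$ (the positive part), together with an \emph{arbitrary} set of zero-tops chosen in $\{0,\dots,g_p-1\}$, i.e.\ below $\min_t g_t$ (the zero part); when $p=0$ the zero-tops range over all of $\{0,\dots,n-1\}$. A quick check in ranks $n=1,2$ returns $2$ and $6$, matching $\binom{2}{1}$ and $\binom{4}{2}$.

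Next I would convert this datum into a lattice path and count paths rather than elements. To a positive $w$ I would attach a monotone path in the $n\times n$ square (anticipating the grid $G(w)$ of Definition \ref{defi grid}), encoded as a $\pm 1$ sequence of length $2n$ from corner to corner whose up/down runs record the blocks of the normal form; the reflection at the special node $\sigma_0$ is exactly what allows the path to cross the diagonal, removing any Dyck-type positivity constraint and leaving \emph{all} $\binom{2n}{n}$ paths available. Since every such path should arise from a unique positive datum, the lemma would follow. An equivalent, more algebraic finish is to group the count by a single statistic so that it collapses to Vandermonde's identity $\sum_{k}\binom{n}{k}^2=\binom{2n}{n}$; here the $2^n$ all-zero-block elements are the natural ``diagonal'' contribution, which is a useful consistency check on whichever encoding one adopts.

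The hard part will be the domination condition $l_t\le g_t$, which couples the two strictly decreasing sequences and is precisely what defeats the naive ``choose tops and bottoms independently'' count. I would not attempt to evaluate the resulting (correct but unpleasant) triple sum $\sum_{p}\sum_{n-1\ge g_1>\cdots>g_p\ge 1}2^{g_p}\,\#\{\,l_1>\cdots>l_p\ge 1 : l_t\le g_t\,\}$ in closed form; instead I would write down the inverse map explicitly, reconstructing the blocks — and in particular the threshold separating positive from zero blocks — from the corners of the path, and prove it is two-sided. Well-definedness of the correspondence rests on the uniqueness clause of Theorem \ref{1_2}: distinct positive elements have distinct normal forms, hence distinct paths, giving injectivity, while surjectivity reduces to checking that the block data read off an arbitrary monotone path satisfies $g_1<n$, $l_p\ge 1$, and the interleaving that makes it a legal case-(a)/(b) word. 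I would confirm the bijection on ranks $n=1,2,3$, and as a final safety net verify that the counts obey the ratio $\binom{2n}{n}\big/\binom{2n-2}{n-1}=(4n-2)/n$.
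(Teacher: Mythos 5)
Your reduction of positivity to the combinatorial datum extracted from Theorem \ref{1_2} is accurate (positive $=$ cases (a)/(b) plus the identity; positive bottoms occupy the largest tops and decrease strictly; zero-block tops form an arbitrary subset of $\{0,\dots,g_p-1\}$), but the proof stops exactly where the work begins. The bijection with monotone paths in the $n\times n$ square is never defined: you specify the target set and the properties the map must have, note that the coupling $l_t\le g_t$ is "the hard part," and then defer both the encoding and its inverse. As written there is no argument that the count is $\binom{2n}{n}$. The proposed "algebraic finish" does not rescue this: grouping by the only visible statistic, the number of blocks $r=k$, does not produce classes of size $\binom{n}{k}^2$ (already for $n=2$, $k=1$ one gets $3$ elements, not $4$), so the claimed collapse to Vandermonde's identity would require a different, unspecified statistic; and the $2^n$ all-zero-block elements cannot literally be a "diagonal contribution" to $\sum_k\binom{n}{k}^2$ since $2^n=\sum_k\binom{n}{k}$. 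So the gap is concrete: the central bijection (or sum evaluation) is asserted, not constructed.

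For comparison, the paper avoids all of this with a three-line subtraction. Stembridge gives $|W^c(B_n)|=(n+2)C_n-1$; the strictly positive elements are exactly the $\sigma_0$-free ones, i.e.\ $W^c(A_{n-1})$, of cardinality $C_n$; and negating the last bottom, $l_r\mapsto -l_r$, is a bijection from the negative elements onto the nonidentity strictly positive ones, so there are $C_n-1$ negatives. Hence the positives number $(n+2)C_n-1-(C_n-1)=(n+1)C_n=\binom{2n}{n}$. If you want to keep your direct-bijection route (which would be a genuinely different and more informative proof, explaining the factor $n+1$ combinatorially via the grid $G(w)$), you must actually write down the path encoding and verify it is a two-sided inverse; until then the lemma is not proved.
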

\begin{demo}
	By \cite[Proposition 5.9]{stembridge1997some} we know that 
	\begin{equation} \label{lema positive type B A}
		|W^c(B_n)|= (n+2)C_n-1, 
	\end{equation}
	where $C_n = \frac{1}{n+1}\binom{2n}{n}$ denotes the $n$-th Catalan number.
	Let us denote by $P$, $SP$ and $N$ the set of positive, strictly positive and negative fully commutative elements in $W^c(B_{n})$, respectively. We notice that the elements of $SP$ are precisely the elements of  $W^c(B_{n})$ in which $\sigma_0$ does not appear. We conclude that 
	\begin{equation}\label{lema positive type B B}
		|SP|=|W^c(A_{n-1})|= C_n. 
	\end{equation}
	 On the other hand, there is a bijection  $N \rightarrow SP-\{ 1 \} $, which is obtained by replacing $l_r$ by $-l_r$. Therefore, 
	\begin{equation}\label{lema positive type B C}
		|N|= C_n-1. 
	\end{equation}
	Since $W^c(B_n)= P\, \dot{\cup} \, N $, a combination of \eqref{lema positive type B A} and \eqref{lema positive type B C} yields the result. 
\end{demo}

 \medskip
  We now explain the classification of elements of  $W^c(\atc )$ with positive affine length.

%
%
%

 \begin{teo}\label{FC}{\rm \cite[Theorem 4.7]{al2017typec}}
Let $w \in W^c(\tilde C_{n})$ with $L(w)   \ge 2$.  
Then $w$ can be written in a unique way as a reduced word of   one and only one of the following two forms, for non negative integers $p$ and $k$:  
\begin{description}
\item[First type]
\begin{equation}\label{formefinalefirsttype}
  w =   [  i,n-1 ]   t_{n}( [  -(n-1),n-1 ] \;  t_{n} )^k  ([ f,n-1 ] )^{-1} \end{equation}
 with $k\geq 1$,  $-n < i \le n $ and $-n < f \le n $.\\ 

\item[Second type]
\begin{equation}\label{formefinalesecondtype}
\begin{aligned}
w &=   [  i_1,n-1 ]  \; t_{n}  [  i_2,n-1 ]  \; t_{n} \dots  [  i_p,n-1 ]  \;  t_{n}    ( [  0,n-1 ]  \; t_{n} )^k  
\;    w_r   \quad  \text{if } p>0 ,  \\  
w &=       ( [  0,n-1 ]  \; t_{n} )^k  
\;    w_r  \quad   \text{if } p=0, 
 \end{aligned}
 \end{equation}
  with $w_r \in W^c(B_{n})$   and  \\

\begin{itemize}
\item if $k > 0$:    $w_r=1$ or $w_r= [ 0,r_1 ][ 0,r_2 ] \dots [ 0,r_u ]$ with $0 \le r_u < \dots < r_1  < n$; 
\item   if $p > 0$: 
 $ \  
n \ge i_1>... > i_{p-1} > |i_p| >0    \  $; 
\item   if $p > 0$ and   $  i_p  < 0$:   $k=0$,  $w_r=1$ and $i_p \neq -(n-1)$;
\item   if $k =0$ and $i_p > 0$:  $w_r= \lbrack l_1,g_1 \rbrack \lbrack l_2,g_2 \rbrack \dots \lbrack l_r,g_r  \rbrack $ with $ |l_1 |< i_p$. 
\end{itemize}
\end{description}

The affine length of $w$ of the first (resp. second) type is  $k+1$ (resp. $p + k$) and we have  $ 0 \le p \le n  $. \\

Now suppose that $L(w) = 1$, then it has a reduced expression of the form: 
 \begin{equation}\label{formefinalelongueur1}
  [  i,n-1 ]  \  t_{n}   \    v  
  \end{equation}
 where

\begin{itemize}
\item  if $0 < i \le n$ then $v=[ l_1,g_1 ][ l_2,g_2 ] \ldots [ l_r,g_r ]  $  such that for $1\leq j \leq r$ either $l_j= n-j$ or $l_j<i$;
\item   if $i <0 $ then $v = ([ h,n-1 ] )^{-1}$ with $-n < h \le n$;
\item  if $  i=0 $ then 
\begin{equation}
	v=\left\{  \begin{array}{ll}
		([ h,n-1 ] )^{-1},  &  \mbox{for } -n < h \le n, \mbox{ or } \\
		([ z,n-1 ])^{-1}[ 0,r_1 ][ 0,r_2 ] \dots [ 0,r_m ], & \mbox{for } 0 \le r_m < \dots r_2 < r_1 < z \le n .
	\end{array}   \right. 
\end{equation}

%
\end{itemize}

Conversely, every $w$ of the above form is in $W^c(\tilde C_{n})$.
\end{teo}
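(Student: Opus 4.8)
The plan is to prove the classification by induction on the affine length $L(w)$, leaning on Stembridge's heap calculus for fully commutative elements and on the local behaviour of the generator $t_n$. Recall that $t_n$ commutes with every $\sigma_i$ for $i\le n-2$ and satisfies a braid relation of order $4$ only with $\sigma_{n-1}$. By Stembridge's criterion an element is fully commutative precisely when no reduced expression contains a saturated braid factor; since the only relations of order $>3$ in $\atc$ are $m(\sigma_0,\sigma_1)=m(\sigma_{n-1},t_n)=4$, the forbidden factors are $\sigma_i\sigma_{i\pm1}\sigma_i$ (for the interior generators) together with the four alternating words on $\{\sigma_0,\sigma_1\}$ and on $\{\sigma_{n-1},t_n\}$. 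The base case $L(w)=0$ is exactly the classification of $W^c(B_n)$ recorded in Theorem \ref{1_2}, which I take as known.

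First I would settle the case $L(w)=1$, isolating the unique occurrence of $t_n$. Because $t_n$ commutes with $\sigma_0,\dots,\sigma_{n-2}$, one may slide it to the left until it is blocked, which can only happen at a $\sigma_{n-1}$. Reducedness and the absence of a $\sigma_{n-1}t_n\sigma_{n-1}$ factor then force the prefix standing immediately to the left of $t_n$ to be a single ascending run $[i,n-1]$, giving $w=[i,n-1]\,t_n\,v$ with $v$ free of $t_n$. The constraints placed on $v$ in the three cases $i>0$, $i<0$ and $i=0$ are exactly the conditions under which the concatenation stays reduced and fully commutative: for $i>0$ the suffix $v$ must be a type-$B$ word whose blocks either hug the diagonal ($l_j=n-j$) or start below $i$, while for $i\le 0$ the sign of $i$ already commits the descent, leaving $v$ an inverse run $([h,n-1])^{-1}$ possibly decorated by boundary blocks $[0,r_t]$.

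Next comes the inductive step for $L(w)\ge 2$, which is the heart of the argument. Reading a reduced word from left to right, the successive occurrences of $t_n$ cut it into blocks; between two consecutive $t_n$'s there must be at least one $\sigma_{n-1}$ (otherwise, after commutation, a factor $t_nt_n$ appears and the word is not reduced), and the absence of the order-$4$ alternation $t_n\sigma_{n-1}t_n\sigma_{n-1}$ together with the global fully commutative condition forces each such block to be either the full palindromic return $[-(n-1),n-1]$ or a shorter ascending segment $[i_j,n-1]$. This is precisely what produces the dichotomy of the statement: the \emph{First type} collects the elements all of whose intermediate blocks are the maximal palindrome $[-(n-1),n-1]$, while the \emph{Second type} collects those built from ascending blocks $[i_j,n-1]$ (and the runs $[0,n-1]$) followed by a tail $w_r\in W^c(B_n)$. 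The remaining side conditions ($n\ge i_1>\cdots>|i_p|>0$; the constraint $k=0$, $w_r=1$, $i_p\ne -(n-1)$ when $i_p<0$; and $|l_1|<i_p$ when $k=0$ and $i_p>0$) are exactly the instances of the forbidden-factor analysis at the junctions of adjacent blocks and at the junction of the last block with $w_r$.

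Finally I would prove uniqueness and the converse. Uniqueness follows from the fact that the heap of $w$ is independent of the chosen reduced word: one checks that each admissible normal form yields a distinct heap and that the parameters $p,k,i_j,l_t,g_t,r_u$ can be read back off the columns of that heap, so no two distinct data give the same element. For the converse, one verifies directly that every word of the stated shape is reduced, by confirming length-additivity of the concatenation of the explicit factors, and fully commutative, by checking that none of the forbidden braid factors occurs. The main obstacle throughout is the exhaustive and non-redundant bookkeeping in the $L(w)\ge2$ case: one must show that the block decomposition is forced, that the two types are disjoint yet together exhaust all possibilities, and that every degenerate boundary situation (small $p$ or $k$, the vanishing tail $w_r=1$, the sign flip at $i_p$) is correctly accounted for without double counting.
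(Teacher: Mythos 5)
First, note that the paper does not prove this statement: Theorem \ref{FC} is imported verbatim from \cite[Theorem 4.7]{al2017typec}, so there is no internal proof here to compare yours against. Your outline does follow the strategy one would expect (and that the cited reference essentially uses): handle $L(w)=0$ via Stembridge's type $B$ classification, settle $L(w)=1$ by isolating the unique $t_n$, and then treat $L(w)\ge 2$ by cutting a reduced word at the occurrences of $t_n$ and controlling the blocks in between via Stembridge's forbidden-factor criterion.

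As a proof, however, the proposal has a genuine gap at its centre. The entire content of the theorem is the claim that the block sitting between two consecutive occurrences of $t_n$ is forced to be either the full return $[-(n-1),n-1]$ or a single segment $[i_j,n-1]$, and, crucially, that these possibilities cannot be mixed within one word: either all intermediate blocks are the maximal palindrome (first type) or none are (second type), with the $i_j$ strictly decreasing and the delicate boundary conditions on $i_p$, $k$ and $w_r$. You assert that this ``is precisely what produces the dichotomy'' and that the side conditions ``are exactly the instances of the forbidden-factor analysis at the junctions'', but no argument is given for why a block between two $t_n$'s cannot consist of several columns $[l_1,n-1][l_2,g_2]\cdots$, why one intermediate block being $[-(n-1),n-1]$ forces all of them to be, or why the inequalities $n\ge i_1>\cdots>|i_p|>0$ and $|l_1|<i_p$ are both necessary and sufficient. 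The same applies to the $L(w)=1$ case, where the condition on $v$ (each $l_j=n-j$ or $l_j<i$) is declared to be ``exactly'' the reducedness-plus-full-commutativity condition without derivation, and to uniqueness and the converse, which are deferred to an unexecuted heap computation and an unexecuted direct verification. In short, the skeleton is the right one, but every load-bearing step is asserted rather than proved; closing the argument requires the junction lemmas (or the heap bookkeeping) that constitute the actual proof in \cite{al2017typec}.
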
 
 
We want to extend the notion of positivity to the elements of $W^c(\tilde C_{n})$. To this end we need to define the notions of subword and avoidance. 

\begin{defi}\rm
	Let $(W,S)$ be an arbitrary Coxeter system. By a subword of a word $w =s_1s_2\ldots s_r $ ($s_i\in S$) we mean a word of the form $s_is_{i+1} \ldots s_j  $, for some $1\leq i \leq j \leq r $. 
	\end{defi}

\begin{defi}\rm
	 Given two elements $w$ and $u $ in $W$ we say that $w$ contains $u$ if there exist reduced expressions $\underline{w} $ of $w$ and $\underline{u}$ of $u$ such that $\underline{u}$ is a subword of $\underline{w}$. Otherwise, we say that $w$ avoids $u$ or that $w$ is $u$-avoiding.    
\end{defi}

We are now ready to define the promised extension of the notion of positivity. 

\begin{defi}\rm \label{defi positive fully commutative elements}
 	 An element $w\in W^c(\atc )  $ is called left-positive (resp. right-positive) if it avoids $\sigma_1 \sigma_0 \sigma_1$ (resp. $\sigma_{n-1}\sigma_{n}\sigma_{n-1}$). Elements which are both left-positive and right-positive will be called positive.    We denote by  $ W^{+c}(\tilde C_{n})$  (resp. $W^{c+}(\tilde C_{n}) $ ) the set of  all left-positive (resp. right-positive) elements  in $W^c(\atc)$. Finally, we denote by  $ W^{+c+}(\tilde C_{n})$  the set of all positive elements in  $W^c(\tilde C_{n})$. 
 \end{defi}

 \begin{rem} \rm \label{remark positivity is detected by normal forms}
 	We notice that the normal form of a given fully commutative element  is a reduced expression  that allows us to determine if such an element is left-positive (resp. right-positive) or not. More precisely, let  $\underline{w}$ be the normal form of an element $w\in W^c(\atc )$. Then, $w$ is left-positive (resp. right-positive) if and only if $\sigma_1\sigma_0 \sigma_1$  (resp. $\sigma_{n-1}\sigma_{n}\sigma_{n-1}$) does not occurs as a subword of $\underline{w}$. 
 \end{rem}

We define $Lp$ to be  the two-sided ideal of  $ \tl $ generated by  $ U_{1}U_0  U_{1} -\kappa_L U_{1} $. We shall, in what follows, investigate the behavior of the elements of the monomial basis of $\tl $ under the canonical surjection $ \tl \rightarrow \tl / Lp $. We keep the same notations for elements of $\tl$ and their images. Our aim is to show that left-positive elements index a basis for $\tl /Lp $. To this end, we study the structure of the ideal $Lp$.\\
  
A precise inspection of the  normal forms in Theorem \ref{1_2} and Theorem \ref{FC}  leads to the following classification of non-left-positive elements, where we use the same notation as in the referred theorems.  

\begin{propos}\label{proposition classification non-left positive}
An element  $w\in W^c(\atc )$  is not left-positive if and only if it satisfies one of the following conditions:

\begin{itemize}
\item[(1)] $L(w)\geq 2$ and $w$ is a first type element. Here  $\sigma_1 \sigma_0 \sigma_1$ appears at least $L(w)-1$ times, 
at most $L(w)+1$ times.
\item[(2)]  $L(w)\geq 2$ and $w$ is a second type element with $k=0$, hence $p\ge 2$, and either 
\begin{itemize}
	\item[(a)]  $i_p < 0$ and $w_r=1$, or
	\item[(b)]  $i_p >0$ and $w_r= [ l_1,g_1 ][ l_2,g_2 ] \ldots [ l_r,g_r ] $ with  $l_r<0$.
\end{itemize}
In both cases  $\sigma_1 \sigma_0 \sigma_1$ appears exactly once.
\item[(3)] $L(w)=1$ and  either
\begin{itemize}
\item[(a)]  $i < 0$ and $h\ge 0$, then $\sigma_1 \sigma_0 \sigma_1$ appears once in $w$,
\item[(b)]  $i < 0$ and $h< 0$, then $\sigma_1 \sigma_0 \sigma_1$ appears twice in $w$,
\item[(c)]   $i=0$, $v=[h,n-1]^{-1} $ and $h<0$, then   $\sigma_1 \sigma_0 \sigma_1$ appears once in $w$, or
\item[(d)]   $i > 0$ and $l_r < 0$, then    $\sigma_1 \sigma_0 \sigma_1$ appears once in $w$.
\end{itemize}

\item[(4)]  $L(w)=0$ and w is a negative element in $W^c(B_n)$. Here  $\sigma_1 \sigma_0 \sigma_1$ appears exactly once. 
\end{itemize}
\end{propos}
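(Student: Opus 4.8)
The plan is to reduce the proposition to a purely combinatorial inspection of normal forms by invoking Remark \ref{remark positivity is detected by normal forms}: an element $w$ is left-positive precisely when the word $\sigma_1\sigma_0\sigma_1$ does not occur as a subword of its normal form $\underline{w}$. Thus it suffices to scan the explicit normal forms supplied by Theorem \ref{1_2} (for $L(w)=0$) and Theorem \ref{FC} (for $L(w)\geq 1$), both detecting every occurrence of $\sigma_1\sigma_0\sigma_1$ and counting how many there are, since the statement asserts not only which elements fail to be left-positive but also the exact multiplicity of the pattern in each case.

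First I would record the two elementary building-block facts from which everything else follows. A factor $[i,j]=\sigma_i\sigma_{i+1}\cdots\sigma_j$ with $i\geq 0$ has strictly increasing indices, so its letter $\sigma_0$, if present, sits at the very beginning; such a factor never contains $\sigma_1\sigma_0\sigma_1$. By contrast a factor with negative first index, $[-i,j]=\sigma_i\cdots\sigma_1\sigma_0\sigma_1\cdots\sigma_j$ with $1\le i\le j$, contains $\sigma_1\sigma_0\sigma_1$ exactly once, namely around its central $\sigma_0$, and the same is true of its inverse since reversing the word preserves that central block. Consequently, provided no occurrence is created at the junction between two consecutive factors, the number of occurrences of $\sigma_1\sigma_0\sigma_1$ in $\underline{w}$ equals the number of factors of $\underline{w}$ whose first index is negative.

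With this reduction the proof becomes a case-by-case traversal. For first-type elements \eqref{formefinalefirsttype} the block $([-(n-1),n-1]\,t_n)^k$ supplies exactly $k=L(w)-1$ negative factors, while the head $[i,n-1]$ and the tail $([f,n-1])^{-1}$ each contribute one more precisely when $i<0$ and $f<0$ respectively; this gives the range from $L(w)-1$ to $L(w)+1$ and, since $k\geq 1$, shows every first-type element is non-left-positive, yielding (1). For second-type elements \eqref{formefinalesecondtype} only $i_p$ and the final factor of $w_r$ may be negative, and I would check that when $k>0$ these indices are forced to be nonnegative, so a negative factor occurs exactly in the two situations of (2), once each (using that a type-$B$ normal form has at most one negative factor, its last, by case (c) of Theorem \ref{1_2}). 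The case $L(w)=1$ in \eqref{formefinalelongueur1} is treated identically by splitting on the sign of $i$ and the shape of $v$, producing (3); and $L(w)=0$ reduces directly to Theorem \ref{1_2}, where only case (c) has a negative last factor, giving (4).

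The step demanding the most care is the junction analysis underlying the building-block count: I must verify that concatenating consecutive factors, with the letters $t_n=\sigma_n$ inserted between them, never produces a new $\sigma_1\sigma_0\sigma_1$ straddling a boundary. The dangerous configurations are those in which a factor ending in $\sigma_1$ abuts a factor beginning $\sigma_0\sigma_1$ — for instance the string $([z,n-1])^{-1}[0,r_1]\cdots[0,r_m]$ in the $i=0$ branch of \eqref{formefinalelongueur1}, or two consecutive factors $[0,g][0,g']$ in case (b) of Theorem \ref{1_2}. In each such case the index constraints (respectively $r_1<z$ and strictly decreasing $g$'s) force the offending $\sigma_0$ either to be preceded by some $\sigma_z$ with $z\geq 2$ or to be terminal with no following $\sigma_1$, so no spurious pattern arises; I would also dispose of the small-rank degeneracies (small $n$, where $\sigma_0,\sigma_1,\sigma_n$ may be adjacent or coincide) separately. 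Once these boundary checks are complete, the classification and the stated multiplicities follow.
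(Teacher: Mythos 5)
Your proposal is correct and is essentially the paper's own argument: the paper proves this proposition precisely by "a precise inspection of the normal forms in Theorem \ref{1_2} and Theorem \ref{FC}", using Remark \ref{remark positivity is detected by normal forms} to reduce left-positivity to the absence of $\sigma_1\sigma_0\sigma_1$ in the normal form, exactly as you do. Your write-up in fact supplies more detail than the paper (the explicit factor-counting rule and the junction analysis at boundaries such as $([z,n-1])^{-1}[0,r_1]$ and $[0,1][0,0]$), and these checks go through as you describe.
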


\begin{defi} \rm
Given an element $w \in W^c(\atc )- W^{+c}(\atc ) $ we define $\overline w \in W^c(\atc)$ as follows (we use the same notation as in Theorem \ref{1_2} and Theorem \ref{FC} and consider the same cases as in Proposition \ref{propo classi left non right}).

\begin{itemize}
 \item[(1)] Suppose that $L(w)\geq 2$ and $w$ is a first type element. We recall that this means that
  \begin{equation*}
 	w= [  i,n-1 ]   t_{n}( [  -(n-1),n-1 ] \;  t_{n} )^k  ([ f,n-1 ] )^{-1} ,
 \end{equation*}
for some  integers $k$, $i $ and $f$ such that $k\geq 1$, $-n<i\leq n$ and $-n < f \leq n$. In this case, we define
\begin{equation*}
 	\overline w =\left\{  \begin{array}{ll}
 		 \lbrack  i,n-1 \rbrack   t_{n}( \lbrack -(n-1),n-1 \rbrack  \;  t_{n} )^k  ( \lbrack -f,n-1 \rbrack )^{-1},	    &         \mbox{if } f<0; \\
 		  t_n\sigma_{n-1}t_n, &   \mbox{if }  k=1, f=n \mbox{ and }  i = n; \\
 		 \lbrack i,n-1  \rbrack   t_n ( \lbrack -(n-1),n-1 \rbrack t_n  )^{k-1} ( \lbrack f,n-1 \rbrack )^{-1},     &   \mbox{otherwise. } 
 	\end{array}   \right. 
 \end{equation*}

 \item[(2)]  Suppose that $L(w)\geq 2$ and $w$ is a second type element with $k=0$ and $p\geq 2$.
 \begin{itemize}
\item[(a)] If $i_p<0$ and $w_r=1$ then  $\overline{w}$ is obtained from the normal form of $w$ by replacing  the block $[i_p,n-1]$  by the block $[-i_p,n-1]$. 
\item[(b)] If $i_p>0$ and $w_r= [ l_1,g_1 ][ l_2,g_2 ] \ldots [ l_r,g_r ] $ with  $l_r<0$ then $\overline{w}$ is obtained from the normal form of $w$ by replacing the block $[l_r,g_r]$ by the block $[-l_r,g_r ]$.
  \end{itemize}
 
\item[(3)] Suppose that $L(w)=1$. 
 \begin{itemize}
 \item[(a)] If $i<0$ and $h\geq 0$ then $\overline{w}$ is obtained from the normal form of $w$ by replacing  the block $[i,n-1]$  by the block $[-i,n-1]$. 
 \item[(b)] If $i<0$ and $h<0$ then $\overline{w}$ is obtained from the normal form of $w$ by replacing  the block $[h,n-1]^{-1}$  by the block $[-h,n-1]^{-1}$. 
 \item[(c)] If $i=0$, $v=[h,n-1]^{-1}$ and $h<0$  then $\overline{w}$ is obtained from the normal form of $w$ by replacing  the block $[h,n-1]^{-1}$  by the block $[-h,n-1]^{-1}$. 
 \item[(d)] If $i>0$ and $l_r<0 $ then $\overline{w}$ is obtained from the normal form of $w$ by replacing  the block $[l_r,g_r]$  by the block $[-l_r,g_r]$. 
\end{itemize}
\item[(4)] Suppose that $L(w)=0$ and $w=[l_1,g_1][l_2,g_2]\cdots [l_r,g_r]$ with $l_r<0$. Then, we define $\overline{w}=[l_1,g_1][l_2,g_2]\cdots [-l_r,g_r] $.
 \end{itemize}
	
\end{defi}

%
%
%
%
%
%
%
%
%

It is obvious from the definition that  $l(w) > l(\overline w) +1$,  for any $w\in W^c(\tilde C_{n}) - W^{+c}(\tilde C_{n})$. We now extend the bar operator to a linear transformation, 
\begin{equation*}
	\bar  \space  :   \mbox{Span}_K  \{ b_x \, | \,  x\in W^c(\tilde C_{n}) - W^{+c}(\tilde C_{n}) \} \longrightarrow \tl ,
\end{equation*}
 which is determined in the set $\{ b_x \, | \,  x\in W^c(\tilde C_{n}) - W^{+c}(\tilde C_{n}) \}$ by the rule
 
 \begin{equation}
 	\overline{b_x} =  \left\{   \begin{array}{rl}
 b_{\bar{x}},	&  \mbox{if } x \mbox{ is not a first type element;} \\
 b_{\bar{x}}, & \mbox{if } x \mbox{ is  a first type element with } f<0 \mbox{ or }  k=1, f=n \mbox{ and }  i = n; \\
 		 \kappa_R  b_{\bar{x}},  & \mbox{otherwise}.  
  	\end{array}      \right.
 \end{equation}

%
%
%
%
%
%
%
%
%
%

 \begin{teo}\label{basisLp} 
 
 The set $X= \{ b_x-\kappa_L  \overline{b_{ x}} \, |\,    x\in W^c(\tilde C_{n}) - W^{+c}(\tilde C_{n})  \}$ is a $K$-basis of $Lp$. 
 
 \end{teo}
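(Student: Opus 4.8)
The plan is to prove three things: that every element of $X$ lies in $Lp$, that $X$ is linearly independent, and that $X$ spans $Lp$. A useful first observation is that the generator of $Lp$ is itself a member of $X$: taking $x=\sigma_1\sigma_0\sigma_1=[-1,1]$, a negative element of $W^c(B_n)$ (case (4) of Proposition \ref{proposition classification non-left positive}), one has $\overline{x}=[1,1]=\sigma_1$, so $b_x-\kappa_L\overline{b_x}=U_1U_0U_1-\kappa_L U_1$.

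\textbf{Containment.} First I would show $X\subseteq Lp$. By Proposition \ref{proposition classification non-left positive}, for every non-left-positive $x$ the word $\sigma_1\sigma_0\sigma_1$ occurs as a subword of the normal form of $x$. Fixing one such occurrence splits the reduced expression of $x$, which correspondingly factors the monomial as $b_x=a\,(U_1U_0U_1)\,a'$ for suitable monomials $a,a'$. Then $b_x-\kappa_L\, a\,U_1\,a'=a\,(U_1U_0U_1-\kappa_L U_1)\,a'\in Lp$, and it remains to identify $a\,U_1\,a'$, after applying the defining relations of $\tl$, with $\overline{b_x}$. This is a case-by-case verification following the definition of $\overline{w}$: in most branches the word $a\,U_1\,a'$ is already the normal form of $\overline{x}$, while in the ``otherwise'' branch of the first type the deletion of the central $\sigma_1\sigma_0\sigma_1$ together with the adjacent $t_n$-block triggers a right-boundary relation $U_{n-1}U_nU_{n-1}=\kappa_R U_{n-1}$, which accounts for the factor $\kappa_R$ in the definition of $\overline{b_x}$. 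Proposition \ref{proposition classification non-left positive} is exactly what makes this finite, since it records in which block $\sigma_1\sigma_0\sigma_1$ sits and with what multiplicity.

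\textbf{Linear independence.} Next I would exploit the length estimate $l(w)>l(\overline w)+1$ recorded before the statement. With $W^c(\tilde C_n)$ ordered by Coxeter length, each element $b_x-\kappa_L\overline{b_x}$ has a unique term of maximal length, namely $b_x$, because its correction $\overline{b_x}$ is a scalar multiple of $b_{\overline x}$ with $l(\overline x)<l(x)$. Consequently the set $\{b_x-\kappa_L\overline{b_x}\mid x\in W^c(\tilde C_n)-W^{+c}(\tilde C_n)\}\cup\{b_w\mid w\in W^{+c}(\tilde C_n)\}$ is obtained from the monomial basis of Definition \ref{defin monomial basis} by a lower-unitriangular change of coordinates with respect to length, hence is itself a $K$-basis of $\tl$. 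In particular $X$ is linearly independent and, writing $V=\mathrm{Span}_K\{b_w\mid w\in W^{+c}(\tilde C_n)\}$, we obtain a vector-space decomposition $\tl=\mathrm{Span}_K(X)\oplus V$.

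\textbf{Spanning.} Finally I would prove $Lp\subseteq\mathrm{Span}_K(X)$. Iterating the containment step, modulo $Lp$ every monomial $b_x$ is congruent to $\kappa_L\overline{b_x}$; since each application strictly decreases the number of occurrences of $\sigma_1\sigma_0\sigma_1$ (equivalently, the length), this reduction terminates in a combination of left-positive monomials, so $\tl=Lp+V$. Given $y\in Lp$, decompose $y=s+v$ with $s\in\mathrm{Span}_K(X)\subseteq Lp$ and $v\in V$; then $v=y-s\in Lp\cap V$, so it suffices to show $Lp\cap V=0$. The cleanest way to secure this --- and, I expect, the main obstacle --- is to prove that $\mathrm{Span}_K(X)$ is itself a two-sided ideal: since it contains the generator $U_1U_0U_1-\kappa_L U_1$, this forces $Lp\subseteq\mathrm{Span}_K(X)$, and together with the containment $\mathrm{Span}_K(X)\subseteq Lp$ it gives $Lp=\mathrm{Span}_K(X)$ and hence $Lp\cap V=0$. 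Establishing the ideal property amounts to checking, for each generator $U_j$ and each non-left-positive $x$, that the products $U_j(b_x-\kappa_L\overline{b_x})$ and $(b_x-\kappa_L\overline{b_x})U_j$ reduce, via the relations \eqref{relations temperley lieb five parameters} and \eqref{relation to pass from TL to two-boundary}, to $K$-combinations of elements of $X$. The triangular description of the $X$-elements keeps the bookkeeping manageable, but tracking the boundary relations and the induced $\kappa_L,\kappa_R$ scalars is where the real work concentrates.
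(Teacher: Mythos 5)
Your proposal follows essentially the same route as the paper: linear independence via the length-unitriangular change of basis, membership of the generator $U_1U_0U_1-\kappa_L U_1$ in $X$, the reduction $b_x\equiv\kappa_L\overline{b_x}$ modulo $Lp$ obtained by locating the occurrence of $\sigma_1\sigma_0\sigma_1$ in the normal form and simplifying, and the key (and genuinely laborious) case-by-case verification that $\mathrm{Span}_K(X)$ is a two-sided ideal, from which $Lp=\mathrm{Span}_K(X)$ follows exactly as you argue. One small correction: the factor $\kappa_R$ appearing in the first-type branch comes from the four-term relation $U_{n-1}U_nU_{n-1}U_n=\kappa_R U_{n-1}U_n$, which already holds in $\tl$, not from $U_{n-1}U_nU_{n-1}=\kappa_R U_{n-1}$, which only holds in the further quotient $\twob$.
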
 
 
 \begin{proof}   It is enough to prove  that:
 \begin{enumerate}
 	\item  The set $X$ is linearly independent,
 	\item  $ U_1U_0U_1-\kappa_LU_1 \in  X$,
 	\item  $X\subset Lp$,
 	\item  The $K$-linear space spanned by $X$ is an ideal of $\tl$. 
 \end{enumerate}

\noindent
1.  We recall that $\{ b_x \,| \, x \in W^c(\tilde C_n)\}$ is the monomial basis of $\tl $. Consider the $K$-linear map $\Phi: \tl \longrightarrow \tl $ determined in the monomial basis by the rule
\begin{equation}
	  \Phi (b_x) = \left\{  \begin{array}{ll}
	  b_x,	& \mbox{if } x \in   W^{+c}(\tilde C_{n})  \\
	  b_x-\kappa_L \overline{b_{x}}, & \mbox{if } x\in  W^c(\tilde C_{n}) - W^{+c}(\tilde C_{n}) .
	  \end{array} \right. 
\end{equation}
Since $l(x) > l(\overline{x} )$, for any non-left-positive element $x$, we see that $\Phi$  is a linear map with
unipotent triangular matrix for any ordering of the monomial basis compatible with the length.  Hence,  $\Phi$ is an isomorphism and 
\begin{equation} \label{another way to say X}
	X =\Phi (\{ b_x \, |\, x \in   W^c(\tilde C_{n}) - W^{+c}(\tilde C_{n})    \} )
\end{equation}
is linearly independent. \\
 
\noindent 
2.  If $x= \sigma_1\sigma_0\sigma_1$ then $\overline{x} =\sigma_1$. Therefore, $U_1U_0U_1-\kappa_L U_1 = b_x-\kappa_L \overline{b_x} \in X$.\\
 
\noindent 
3.  Let $\mathscr{L}_p : \tl \longrightarrow \tl/ Lp  $ be the quotient map. Given $1\leq i \leq j <n$ we have
\begin{equation}
\begin{aligned}\label{equation new ideal}
	\mathscr{L}_p(b_{[-i,j]}) & = \mathscr{L}_p(U_iU_{i-1}\cdots U_1U_0U_1 \cdots U_j ) \\
	 & = \kappa_L \mathscr{L}_p(U_iU_{i-1}\cdots U_2U_1U_2 \cdots U_j )\\
	 & =\kappa_L \mathscr{L}_p(U_iU_{i-1}\cdots U_3U_2U_3 \cdots U_j )\\
	 & \vdots \\
	 & = \kappa_L \mathscr{L}_p(U_iU_{i+1}\cdots U_j ) \\
	 & = \kappa_L \mathscr{L}_p(b_{[i,j]} ). 
\end{aligned}
\end{equation}
Similarly, we  obtain  $\mathscr{L}_p(b_{[-i,j]^{-1}})= \kappa_L \mathscr{L}_p(b_{[i,j]^{-1}})$. On the other hand, we have
\begin{equation}
\begin{aligned}\label{equation new ideal A}
	\mathscr{L}_p(b_{\sigma_{n-1}t_n[-(n-1),n-1]t_n}) & = \mathscr{L}_p(U_{n-1}U_{n} U_{n-1}U_{n-2}\cdots U_1U_0U_1 \cdots U_{n-1}U_{n}) \\
	 & = \kappa_L \mathscr{L}_p(U_{n-1}U_{n} U_{n-1}U_{n-2}\cdots U_2U_1U_2 \cdots U_{n-1}U_{n})\\
	 & =\kappa_L \kappa_L \mathscr{L}_p(U_{n-1}U_{n} U_{n-1}U_{n-2}\cdots U_3U_2U_3 \cdots U_{n-1}U_{n})\\
	 & \vdots \\
	 & = \kappa_L \mathscr{L}_p(U_{n-1}U_{n} U_{n-1}U_n ) \\
	 & = \kappa_L\kappa_R \mathscr{L}_p(U_{n-1}U_n ). 
\end{aligned}
\end{equation}
Similarly, we  obtain  $\mathscr{L}_p(b_{t_n[-(n-1),n-1]t_n})= \kappa_L \mathscr{L}_p(U_nU_{n-1}U_n)$. By the way the bar operator was defined and the four formulas above we conclude that $\mathscr{L}_p(b_{x})=\kappa_L \mathscr{L}_p(\overline{b_{x}})  $, for all $x\in W^c(\tilde C_{n}) - W^{+c}(\tilde C_{n})  $. Therefore, $X\subset Lp$.\\

\noindent
4.  By \eqref{another way to say X},  it is enough to show that both 
$U_s \Phi (b_x)$ and $  \Phi (b_x)U_s $ belong to the $K$-linear space spanned by $X=\Phi (\{ b_y \, |\, y \in   W^c(\tilde C_{n}) - W^{+c}(\tilde C_{n})    \} )$, for any  $ x \in W^c(\tilde C_n)-  W^{+c}(\tilde C_n)$ and any  $0\le s \le n$. This can be achieved by performing a case-by-case analysis. For the sake of brevity, we only  detail the  case 
\begin{equation}\label{the only detailed case}
	x= [  i,n-1 ]   t_{n}( [  -(n-1),n-1 ] \;  t_{n} )^k  ([ f,n-1 ] )^{-1} 
\end{equation}
 with $ 1 \le k $  and $0 \le f < n $, when the multiplication by $U_s$ is done on the right. All the other cases are dealt with similarity.\\
 We remark that a first type element is determined by a triple $(i,k,f)$. In this setting, we set 
 $b(i,k,f):=b_x$ if $x$ is as in \eqref{the only detailed case}. We also define $b(i,0,f)= b_{[i,n-1]t_n(\sigma_{n-1}\sigma_{n-2} \cdots \sigma_f )}$. For instance, with the new notation, the bar operator over monomials indexed by first type elements is given by 
 \begin{equation}
 	\displaystyle \overline{b(i,k,f)} = \left\{  \begin{array}{ll}
 	 b(i,k,-f),	& \mbox{if } f<0;\\ 
 	  	U_nU_{n-1}U_n,	& \mbox{if }  k=1, f=n \mbox{ and }  i = n;  \\
 			\kappa_R b(i,k-1,f), & \mbox{otherwise. } \\  
 	\end{array}  \right. 
 \end{equation}

 We split the proof in several cases in accordance with the relation between $f$ and $s$. \\
 
\noindent
{\textbf{Case A }}$(s=f)$.  We have $\Phi ( b(i,k,f) )U_f = \lambda \Phi ( b(i,k,f))$, where $\lambda = \delta_L$ and $\lambda = \delta$ for $f=0$ and $ 0<f<n$, respectively. In both cases, we are done.\\
 
\noindent
{\textbf{Case B }} $(s=f+1)$. We have 
 	\begin{equation}
		\Phi ( b(i,k,f) )U_{f+1} =   \left\{  \begin{array}{l} 
		\Phi (b(i,k,-1))+\kappa_L \Phi (b(i,k,1))- \kappa_L\kappa_R \Phi (b(i,k-1,-1))	,	 \\
			\Phi ( b(i,k,f+1) ),	   \\
		 	 \kappa_R \Phi (b(i,k,f+1) ),	  \\
 \end{array}  \right.
 \end{equation}
for $f=0$, $0<f<n-1$ and $f=n-1$, respectively. Then,  $ \Phi ( b(i,k,f) )U_{f+1}$ belongs to the  $K$-linear space spanned by $X$.\\

\noindent
{\textbf{Case B1}} $(s=f-1)$. Similar to Case B.\\

\noindent
{\textbf{Case C }} $(f+2\leq s <n)$. A repeated application of the relations in  \eqref{relations temperley lieb five parameters} together with an inductive argument yield 
 \begin{equation}
	b(i,k,f)U_s = \kappa_L^{k-1}\kappa_R^{k} b_{[i,n-1]}(U_n U_{n-1}\cdots U_s)(U_{s-2}\cdots U_1U_0U_1 \cdots U_f ) ,
	\end{equation}
	for all $k\geq 1$. Then, we obtain 
\begin{equation}\label{equation long proof A}
\Phi (b(i,k,f))U_s	= b(i,k,f)U_s- \kappa_L \kappa_R b(i,k-1,f) U_s =0,   
\end{equation}
which gives us the result if $k>1$. If $k=1$ we have $\Phi (b(i,1,f))U_s = \kappa_R \Phi ( b_y)$,
where 
\begin{equation}
y= [i,n-1] t_n( \sigma_{n-1}\sigma_{n-2} \cdots \sigma_s)(\sigma_{s-2}\sigma_{s-3} \cdots \sigma_{f+1})[-f,f].   
\end{equation}
We remark that $y$ is a non-left-positive element. Therefore, $\Phi ( b_y)$ does belong to $X$ and the result follows in this case as well.\\

\noindent
{\textbf{Case C1 }}  $( 0<s\leq f-2)$. Similar to Case C.\\

\noindent
{\textbf{Case D }} $(f+2\leq s =n)$. As in Case C, a repeated application of the relations in  \eqref{relations temperley lieb five parameters} together with an inductive argument yield 
\begin{equation} \label{equation long proof B}
	b(i,k,f)U_n = \left\{ \begin{array}{rl}
	\kappa_L^k\kappa_R^k	b_{[i,n-1]}(U_n U_{n-1}\cdots U_1U_0 \cdots U_f)U_n,
&  \mbox{if } f=0; \\
	\kappa_L^{k-1}\kappa_R^kb_{[i,n-1]}(U_n U_{n-1}\cdots U_1U_0 \cdots U_f)U_n,
	 & \mbox{if } 0<f\leq n-2.
	\end{array}  \right.  \end{equation}
for all $k\geq 1$. Then, $\Phi (b(i,k,f) )U_n=0$ which  implies the result for $k>1$.   We notice that \eqref{equation long proof B}  still holds if $k=0$ and $f=0$, which gives us the result in the case $k=1$ and $f=0$. The remaining case, i.e. $k=1$ and $0<f\leq n-2$, must be split in different cases according to the value of $i$. For $i=n$ we have
$\Phi (b(n,1,f)) U_n = \kappa_R \Phi (b_z)$, where
\begin{equation}
	z= t_n\sigma_{n-1} t_n \sigma_{n-2}\sigma_{n-3} \cdots \sigma_1\sigma_0\sigma_1 \cdots \sigma_f.
\end{equation}
We remark that $z$ is a second type element,  that $z$ is non-left positive and that 
\begin{equation}
\overline{z}= t_n\sigma_{n-1} t_n \sigma_{n-2}\sigma_{n-3} \cdots \sigma_f .	
\end{equation}
All the above gives us the result when $i=n$. We now assume that $i\neq n$. Under this assumption, we notice that there is a generator $U_{n-1}$ on the right of the element $b_{[i,n-1]}$ in  \eqref{equation long proof B}.  By moving the rightmost $U_n$ to the left in \eqref{equation long proof B} and applying the relation $U_{n-1}U_nU_{n-1}U_n= \kappa_R U_{n-1}U_n $ we obtain

\begin{equation}
	b(i,1,f)U_n= \left\{  \begin{array}{ll}
	\kappa_R^2 (U_iU_{i-1} \cdots U_1 U_0U_1 \cdots U_f) U_n	, & \mbox{if } 0<i<n; \\
	\kappa_L\kappa_R^2 (U_0U_1\cdots U_f )U_n,  	& \mbox{if } i=0; \\
	\kappa_L\kappa_R^2 (U_{|i|}U_{|i|-1} \cdots U_1 U_0U_1 \cdots U_f) U_n	,	& \mbox{if } i<0.  \\
	\end{array} 
	\right.
\end{equation}
Finally, we have

\begin{equation}
	\Phi (b(i,1,f))U_n= \left\{  \begin{array}{ll}
0,  	& \mbox{if } i=0; \\
	\kappa_R \Phi (b_u)	,	& \mbox{otherwise } ,  \\
	\end{array} 
	\right.
\end{equation}
where   $u= \sigma_{|i|} \sigma_{|i|-1} \cdots  \sigma_1\sigma_0 \sigma_1 \cdots \sigma_f $ is a non-left-positive element and the result follows. \\

\noindent
{\textbf{Case D1 }} $ ( 0=s\leq f-2)$. Similar to Case D. 
 \end{proof} 


 \begin{coro} The set   
 $\{ b_w  \, |\,  w\in W^{+c}(\tilde C_{n})  \}$ is a $K$-basis  of  $ \tl / Lp $.
 \end{coro}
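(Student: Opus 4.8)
The plan is to deduce the statement directly from Theorem~\ref{basisLp} together with the properties of the linear map $\Phi$ introduced in its proof; the corollary is pure book-keeping once one has a basis of the ideal $Lp$ in hand. The underlying principle is that $\Phi$ is a $K$-linear automorphism of $\tl$ carrying the monomial basis to a new basis that splits cleanly into a ``positive'' part and a part spanning $Lp$.

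First I would recall that $\Phi\colon \tl \to \tl$ is a $K$-linear isomorphism. This was established in step~1 of the proof of Theorem~\ref{basisLp}, where $\Phi$ was shown to have unipotent triangular matrix with respect to any ordering of the monomial basis compatible with the length. Consequently $\Phi$ sends the monomial basis $\{b_x \mid x\in W^c(\tilde C_{n})\}$ to another $K$-basis of $\tl$.

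Next I would split the index set along positivity, $W^c(\tilde C_{n}) = W^{+c}(\tilde C_{n})\ \dot{\cup}\ \bigl(W^c(\tilde C_{n})-W^{+c}(\tilde C_{n})\bigr)$, and track the two pieces under $\Phi$. By the very definition of $\Phi$ one has $\Phi(b_x)=b_x$ for every $x\in W^{+c}(\tilde C_{n})$, while $\Phi\bigl(\{b_x \mid x\in W^c(\tilde C_{n})-W^{+c}(\tilde C_{n})\}\bigr)=X$ by \eqref{another way to say X}. Hence the image basis is the disjoint union $\{b_w \mid w\in W^{+c}(\tilde C_{n})\}\ \dot{\cup}\ X$, which yields the vector space decomposition
\[
\tl = \mathrm{Span}_K\{b_w \mid w\in W^{+c}(\tilde C_{n})\}\ \oplus\ \mathrm{Span}_K(X).
\]
By Theorem~\ref{basisLp} we have $\mathrm{Span}_K(X)=Lp$, so this reads $\tl = \mathrm{Span}_K\{b_w \mid w\in W^{+c}(\tilde C_{n})\}\oplus Lp$ as $K$-vector spaces.

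Finally I would conclude that the quotient map $\tl \to \tl/Lp$ restricts to a $K$-linear isomorphism on the complementary subspace $\mathrm{Span}_K\{b_w \mid w\in W^{+c}(\tilde C_{n})\}$; therefore the images of $\{b_w \mid w\in W^{+c}(\tilde C_{n})\}$ form a $K$-basis of $\tl/Lp$, as claimed. There is no genuine obstacle here, since all the substantive work---identifying the ideal $Lp$ with $\mathrm{Span}_K(X)$ and verifying that $\Phi$ is invertible---was already carried out in Theorem~\ref{basisLp}; the corollary merely packages a basis of the ideal together with the fixed vectors of $\Phi$ into a basis of the quotient.
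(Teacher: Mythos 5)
Your argument is correct and is essentially the same as the paper's: both deduce from the unipotent-triangularity of $\Phi$ that $X\cup\{b_w \mid w\in W^{+c}(\tilde C_{n})\}$ is a $K$-basis of $\tl$, and then combine this with the fact that $X$ is a $K$-basis of $Lp$ (Theorem \ref{basisLp}) to conclude that the positive monomials descend to a basis of the quotient. Your write-up merely makes the direct-sum decomposition explicit; no new ideas are needed or introduced.
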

 
  \begin{proof} We use the same notation for $b_w$ and its image in the quotient.  On the one hand, the proof of the first claim in Theorem \ref{basisLp} reveals that $ X \cup 	\{ b_w  \, |\,  w\in W^{+c}(\tilde C_{n})  \} $ is a  $K$-basis for $ \tl $. On the other hand, Theorem \ref{basisLp} shows that  $X$ is a  $K$-basis of $Lp$. The result follows. 
 \end{proof} 
 
We define $Rp$ to be the ideal of $\tl / Lp$ generated by $U_{n-1}U_nU_{n-1}-\kappa_R U_{n-1}$. We now want to  mimic the previous strategy in order to determine a basis for the ideal $Rp$. The first step in this case is to give a classification of the left-positive elements that are not right-positive. A moment's thought reveals that first type elements are not left-positive. Another moment's thought (perhaps longer) reveals that second type elements are right-positive. Furthermore, it is clear that affine length zero elements are right-positive. So that left-positive elements that are not right-positive must have affine length one. Finally, an inspection of this case in Theorem \ref{FC} gives us the following classification.

 \begin{propos} \label{propo classi left non right}
An element $w\in W^c(\atc)$ belongs to $ W^{+c}(\atc )-W^{c+}(\atc )$  if and only if it satisfies the following conditions. First, $L(w)=1$. Therefore, we can write $w= [i,n-1]t_n v$. In this setting, we have $0\leq i <n$, and one of the following. 
\begin{description}
	\item[(a)]  $0<i<n $ and $ v$ is  of the form \eqref{Stembridge} with $l_1=n-1$ and $l_j=n-j$ or $0\leq l_j<i$ for $2\leq j \leq r$.
	\item[(b)] $i=0$ and $v=  ([h,n-1])^{-1} $  with  $0 \leq h <n$.
	\item[(c)] $i=0$ and $v=   ([z,n-1])^{-1} [0,r_1][0,r_2]\cdots [0,r_m]$  with $0 \leq r_m <...<r_2<r_1<z<n $.
\end{description}
 	Furthermore, in the three cases $\sigma_{n-1}\sigma_n\sigma_{n-1}$ occurs exactly once.
 \end{propos}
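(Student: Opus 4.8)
The plan is to first reduce the statement to elements of affine length one, and then read off the three cases from the length-one part of the classification in Theorem \ref{FC}. The reduction is exactly the content of the paragraph preceding the statement: a first type element contains the block $[-(n-1),n-1]$, hence contains $\sigma_1\sigma_0\sigma_1$ and is never left-positive; every second type element and every element of affine length zero is right-positive; therefore any $w \in W^{+c}(\atc) - W^{c+}(\atc)$ must satisfy $L(w) = 1$. I would record these three sub-claims as a short preliminary lemma, the only delicate one being that second type elements are right-positive.

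The engine of the proof is Remark \ref{remark positivity is detected by normal forms}: $w$ fails to be right-positive if and only if $\sigma_{n-1}\sigma_n\sigma_{n-1}$ occurs as a (consecutive) subword of the normal form of $w$. When $L(w)=1$ the letter $t_n=\sigma_n$ occurs exactly once in the normal form $w=[i,n-1]\,t_n\,v$, so the forbidden subword, if it occurs at all, must straddle this unique $\sigma_n$; consequently it occurs at most once, which also yields the final ``exactly once'' assertion. Thus $w$ is not right-positive precisely when the letter immediately preceding $t_n$ and the letter immediately following $t_n$ are both $\sigma_{n-1}$. The preceding letter is the last letter of $[i,n-1]$, which is $\sigma_{n-1}$ exactly when $0\le i<n$; if $i=n$ then $[i,n-1]=1$ is empty and $w$ is automatically right-positive, so $0\le i<n$ is forced. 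The following letter is the first letter of $v$, so the task reduces to deciding, within each branch of the length-one classification, when $v$ begins with $\sigma_{n-1}$.

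From here I would run through the three branches of the $L(w)=1$ case of Theorem \ref{FC} and intersect each with the left-positivity constraints of Proposition \ref{proposition classification non-left positive}(3). If $i<0$ the element is never left-positive, so this branch is discarded. If $0<i<n$, then $v=[l_1,g_1]\cdots[l_r,g_r]$; left-positivity forces $l_r\ge 0$ (equivalently all $l_j\ge 0$, since only the last block of a Stembridge normal form can be negative by Theorem \ref{1_2}), and $v$ begins with $\sigma_{n-1}$ exactly when $l_1=n-1$ (impossible through the alternative $l_1<i$, as $i\le n-1$); this gives case (a), with $v\neq 1$ automatic. If $i=0$, then $v$ is one of the two forms allowed by Theorem \ref{FC}: for $v=([h,n-1])^{-1}$ left-positivity forces $h\ge0$ and $v$ begins with $\sigma_{n-1}$ exactly when $h<n$, giving case (b); for $v=([z,n-1])^{-1}[0,r_1]\cdots[0,r_m]$ a direct inspection shows left-positivity always holds (the blocks beginning with $\sigma_0$ are never flanked on the left and right by $\sigma_1$ simultaneously, because $r_1<z$), and $v$ begins with $\sigma_{n-1}$ exactly when $z<n$, giving case (c). Conversely, each displayed family is manifestly of affine length one, left-positive, and not right-positive by the straddling criterion, so the characterization is an equivalence.

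The step I expect to be the main obstacle is the assertion, used in the reduction, that \emph{every} second type element is right-positive: one must verify that the potentially dangerous configuration, where a block $[i_{j+1},n-1]$ immediately following a $t_n$ starts with $\sigma_{n-1}$ (i.e. $i_{j+1}=n-1$), cannot actually produce $\sigma_{n-1}\sigma_n\sigma_{n-1}$. Here the constraints $n\ge i_1>\cdots$ together with reducedness (two consecutive $t_n$ would be non-reduced) and the convention $[n,n-1]=1$ force $i_1=n$ in that situation, so the relevant window is $t_n\,\sigma_{n-1}\,t_n=\sigma_n\sigma_{n-1}\sigma_n$, which is harmless. Checking this corner carefully, together with the analogous bookkeeping around the $([0,n-1]\,t_n)^k$ part, is where the real work lies; the length-one analysis itself is then routine.
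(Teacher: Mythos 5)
Your proposal is correct and follows essentially the same route as the paper: the paper's justification is precisely the informal paragraph preceding the statement (first type elements are not left-positive, second type and affine-length-zero elements are right-positive, hence $L(w)=1$, then inspect Theorem \ref{FC} against Proposition \ref{proposition classification non-left positive}(3)). You simply supply the details the paper leaves to ``a moment's thought,'' including the one genuinely delicate point (that $i_{j+1}=n-1$ forces $i_1=n$ and an empty leading block, so second type elements cannot contain $\sigma_{n-1}\sigma_n\sigma_{n-1}$), and your handling of it is sound.
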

 
 \begin{defi} \rm 
For $w \in W^{+c}(\atc )-W^{c+}(\atc )$   we define $\widetilde{w} \in W^{+c+}(\atc )$ by  (we use the same notation and  cases considered in Proposition \ref{propo classi left non right})
 \begin{description}
	\item[(a)] 
	\begin{equation}
		\widetilde{w} =\left\{ \begin{array}{rl}
			[i,l_\alpha ][l_{\alpha +1},g_{\alpha +1}]\ldots [l_r,g_r],  & \mbox{if } i\leq l_\alpha;  \\
			 ([l_\alpha ,i])^{-1}[l_{\alpha +1},g_{\alpha +1}]\ldots [l_r,g_r], & \mbox{if } i> l_\alpha,
		\end{array} \right. 
	\end{equation} 
	where  $\alpha= \max \{ 1\leq j \leq r \, |\, l_j=n-j  \}$. 
	\item[(b)] \begin{equation}
		\widetilde{w} =\left\{ \begin{array}{rl}
			 [0,h],  & \mbox{if }  h>0;  \\
			  \sigma_0\sigma_1\sigma_0  , & \mbox{if } h=0.
		\end{array} \right.
	\end{equation} 
	\item[(c)] \begin{equation}
	\widetilde{w} = [0,z][0,r_1]\ldots [0,r_m].	
	\end{equation}
\end{description}
Finally, we define a $K$-linear map $\widetilde \space : \mbox{Span}_K \{b_w \, |\,  w\in W^{+c}(\atc )-W^{c+}(\atc )  \} \rightarrow \tl / Lp$ determined by $\widetilde{b_w} =  b_{\widetilde{w}}$.
 \end{defi}

  \begin{teo} The set   
 $\{ b_w  \, |\, w\in W^{+c+}(\tilde C_{n})  \}$    is a $K$-basis  of  $ (\tl /Lp )/Rp$.
 Therefore, the set of positive fully commutative elements indexes a monomial basis for $\twob$.
 \end{teo}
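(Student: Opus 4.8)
The plan is to prove the statement by running, inside the quotient algebra $\tl/Lp$, an argument that mirrors the proof of Theorem \ref{basisLp} with ``left'' systematically replaced by ``right.'' Concretely, I would set
\[
Y=\{\, b_w-\kappa_R\,\widetilde{b_w} \ \mid\ w\in W^{+c}(\tilde C_n)-W^{c+}(\tilde C_n)\,\},
\]
regarded as a subset of $\tl/Lp$; this makes sense because, by the definition of $\widetilde{\ }$, we have $\widetilde w\in W^{+c+}(\tilde C_n)\subset W^{+c}(\tilde C_n)$, so $\widetilde{b_w}=b_{\widetilde w}$ is a genuine basis vector of $\tl/Lp$. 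The core assertion, the exact analogue of Theorem \ref{basisLp}, is that $Y$ is a $K$-basis of the ideal $Rp$.

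To establish this I would verify the same four points as in Theorem \ref{basisLp}. For linear independence, introduce the $K$-linear endomorphism $\Phi'$ of $\tl/Lp$ fixing $b_w$ for $w\in W^{+c+}(\tilde C_n)$ and sending $b_w\mapsto b_w-\kappa_R b_{\widetilde w}$ for $w\in W^{+c}(\tilde C_n)-W^{c+}(\tilde C_n)$; since $l(\widetilde w)<l(w)$ is evident from the explicit normal forms, $\Phi'$ is unipotent-triangular for any length-compatible order on the basis $\{b_w\mid w\in W^{+c}(\tilde C_n)\}$ of $\tl/Lp$, hence an isomorphism, and $Y=\Phi'(\{b_w\mid w\in W^{+c}-W^{c+}\})$ is independent. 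For membership of the generator, the element $w=\sigma_{n-1}t_n\sigma_{n-1}$ is left-positive but not right-positive with $\widetilde w=\sigma_{n-1}$, so $b_w-\kappa_R\widetilde{b_w}=U_{n-1}U_nU_{n-1}-\kappa_R U_{n-1}\in Y$. For the inclusion $Y\subseteq Rp$, applying the defining relation $U_{n-1}U_nU_{n-1}\equiv\kappa_R U_{n-1}$ repeatedly under the projection $\mathscr{R}_p$ onto $(\tl/Lp)/Rp$, exactly in the manner of \eqref{equation new ideal} and \eqref{equation new ideal A}, yields $\mathscr{R}_p(b_w)=\kappa_R\,\mathscr{R}_p(\widetilde{b_w})$ for every such $w$.

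The decisive simplification---and the reason this argument is much shorter than that of Theorem \ref{basisLp}---is Proposition \ref{propo classi left non right}: every element of $W^{+c}(\tilde C_n)-W^{c+}(\tilde C_n)$ has affine length exactly $1$ and belongs to only the three families (a)--(c), in each of which $\widetilde w$ lies in $W^c(B_n)$. Thus the fourth point, the ideal property, which I expect to be the main obstacle, reduces to checking that $U_s\,\Phi'(b_w)$ and $\Phi'(b_w)\,U_s$ lie in $\mathrm{Span}_K(Y)$ for these three families and all $0\le s\le n$; this is a case analysis on the position of $s$ relative to the right boundary, parallel to Cases A--D of Theorem \ref{basisLp} but carried out once rather than across all affine lengths. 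Care is needed because one computes in $\tl/Lp$, so left-boundary simplifications may intervene; however, since $\widetilde{\ }$ only alters the top blocks near $\sigma_{n-1},t_n$ while leaving the lower blocks of the normal form untouched, multiplications by $U_s$ with small $s$ commute past the modified part and those with $s$ near $n$ are absorbed by the right-boundary relation, so the analysis stays controlled.

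Finally, combining the claim with the preceding corollary (that $\{b_w\mid w\in W^{+c}(\tilde C_n)\}$ is a basis of $\tl/Lp$), the disjoint decomposition $W^{+c}(\tilde C_n)=W^{+c+}(\tilde C_n)\,\dot{\cup}\,(W^{+c}(\tilde C_n)-W^{c+}(\tilde C_n))$ together with the isomorphism $\Phi'$ shows that $Y\cup\{b_w\mid w\in W^{+c+}(\tilde C_n)\}$ is again a basis of $\tl/Lp$; since $Y$ is a basis of $Rp$, the images of $\{b_w\mid w\in W^{+c+}(\tilde C_n)\}$ form a basis of $(\tl/Lp)/Rp$. The concluding ``therefore'' is then immediate: $\twob$ is the quotient of $\tl$ by the ideal generated by $U_1U_0U_1-\kappa_L U_1$ and $U_{n-1}U_nU_{n-1}-\kappa_R U_{n-1}$, so successive quotienting identifies $(\tl/Lp)/Rp$ with $\twob$, and the positive fully commutative elements index its monomial basis.
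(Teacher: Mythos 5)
Your proposal is correct and follows essentially the same route as the paper: the paper's proof consists precisely of defining $\Phi'$, noting $l(w)>l(\widetilde w)$ makes it a unipotent-triangular isomorphism, and invoking ``the same arguments as in Theorem \ref{basisLp}'' to conclude that $\{\Phi'(b_w)\mid w\in W^{+c}(\tilde C_n)-W^{c+}(\tilde C_n)\}$ is a basis of $Rp$, whence the quotient basis and the identification $(\tl/Lp)/Rp\simeq\twob$. You merely spell out the four-point checklist that the paper leaves implicit, including the correct generator $\sigma_{n-1}t_n\sigma_{n-1}$ with $\widetilde w=\sigma_{n-1}$ and the simplification coming from Proposition \ref{propo classi left non right}.
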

 
  \begin{proof} 
 Consider the $K$-linear map $\Phi': \tl /Lp  \rightarrow \tl /Lp $ determined by 
 \begin{equation}
 	\Phi' (b_w) = \left\{  \begin{array}{ll}
	  b_w,	& \mbox{if } w \in   W^{+c+}(\tilde C_{n});  \\
	  b_w-\kappa_R \widetilde{b_{w}}, & \mbox{if } w\in  W^{+c}(\tilde C_{n}) - W^{c+}(\tilde C_{n}) .
	  \end{array} \right. 
 \end{equation}
 Since $l(w)>l(\widetilde{w})$ we see that $\Phi'$ is an isomorphism of $K$-vector spaces. By the same arguments as the ones used in Theorem \ref{basisLp} we obtain that the set 
$\{	\Phi' (b_w ) \, |\,  w \in W^{+c}(\tilde C_{n}) - W^{c+}(\tilde C_{n})  \}$
 is a $K$-basis for $Rp$. Therefore, $\{ b_w  \, |\, w\in W^{+c+}(\tilde C_{n})  \}$    is a $K$-basis  of  $ (\tl /Lp )/Rp$. Finally, we notice that 
 \begin{equation}
 	(\tl /Lp )/Rp \simeq \twob.  
 \end{equation}
 
 \end{proof} 
 
%
   

   We conclude this section by providing a classification for positive fully commutative elements. If we look back at Theorem \ref{FC}, we notice that the first type elements cannot be positive, while we can give some conditions on the normal form  of  second type elements and elements of affine length one, so that they would be positive. We sum up in the notations of  Theorem \ref{FC}. Let $w \in W^c(\atc )$ be a positive element. 
If $L(w)   \ge 2$,  it has the following form, for non negative integers $p$ and $k$:  
\begin{equation}\label{secondtypepositive}
\begin{aligned}
w &=   [  i_1,n-1 ]  \; t_{n}  [  i_2,n-1 ]  \; t_{n} \dots  [  i_p,n-1 ]  \;  t_{n}    ( [  0,n-1 ]  \; t_{n} )^k  
\;    w_r   \quad  \text{if } p>0 ,  \\  
w &=       ( [  0,n-1 ]  \; t_{n} )^k  
\;    w_r  \quad   \text{if } p=0, 
 \end{aligned}
 \end{equation}
  with $w_r \in W^c(B_{n})$   and  
\begin{itemize}
\item if $k > 0$: $w_r=1$ or  $w_r=[ 0,r_1 ][ 0,r_2 ] \dots [ 0,r_u ]$ with $0 \le r_u < \dots < r_1<n$ ; 
\item if $p > 0$: 
 $ \  
n \ge i_1>... > i_{p-1} > i_p >0    \  $; 
\item if $k =0$ :  $w_r$ is a positive element in $W^c(B_n)$ with $ l_1 < i_p$. 
\end{itemize}

Now suppose that $L(w) = 1$, then $w$  has a reduced expression of the form $[  i,n-1 ]    t_{n}       v $ where
\begin{itemize}
\item  if $0 < i \le n$ then $v$ is a positive element in $W^c(B_n)$ with $l_1<i$;
\item  if $  i=0 $ then  $v=1$ or $v$ is equal to $[ 0,r_1 ][ 0,r_2 ] \dots [ 0,r_m ]$ for $0 \le r_m < \dots r_2 < r_1 <  n$.\\
\end{itemize}

\noindent
In order to provide a uniform description of the positive fully commutative elements we need a small modification of the notation introduced in \eqref{first parentheses}. To this end, we define 
\begin{equation}
	 \langle i,j \rangle = \sigma_i \sigma_{i+1} \dots \sigma_j  
\end{equation}
for $0\leq i \leq j\leq n$. With this notation at hand, we can reformulate the above description of positive fully commutative elements as follows.

\begin{propos}  \label{proposition normal form posit}
	Let $n>1$ be an integer. Let $w$ be a positive element in $  W^c(\tilde C_{n})$ other than the unit. Then  there exists a positive integer  $k$ such that
	\begin{equation} \label{eq normal form blob}
		w=\langle l_1,r_1 \rangle \langle   l_2, r_2 \rangle \ldots \langle l_k, r_k \rangle, 
	\end{equation}
	for some integers $l_i$ and $r_i$ such that
	\begin{enumerate}
		\item $n \geq l_1\geq l_2 \geq \ldots \geq l_k \geq 0$;
		\item $n\geq r_1\geq r_2 \geq \ldots \geq r_k \geq 0$;
		\item $l_i\leq r_i$;
		\item If $l_{i+1} = l_i$ then $l_i=0$;
		\item If $r_{i+1}=r_i$ then $r_i=n$.       
	\end{enumerate}   
Conversely, every $w$ of the form \eqref{eq normal form blob} is in $\pos$.
\end{propos}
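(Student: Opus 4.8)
The plan is to recast the explicit description of positive fully commutative elements obtained just before the statement (the specialisation of Theorem \ref{FC} together with Stembridge's form from Theorem \ref{1_2}) into the compact $\langle \cdot , \cdot \rangle$-notation, and then to run that translation backwards for the converse. The entire argument rests on one observation: every building block occurring in a positive normal form is an \emph{increasing} run of consecutive generators, and hence equals a block $\langle l, r\rangle$. Concretely, $[i,n-1]\,t_n = \langle i,n\rangle$ for $0\le i\le n$ (with the convention $[n,n-1]\,t_n = t_n = \langle n,n\rangle$), the factor $[0,n-1]\,t_n = \langle 0,n\rangle$, and each block $[l,g]$ of the $W^c(B_n)$-tail $w_r$ equals $\langle l,g\rangle$, since positivity forces $l\ge 0$ and one always has $g\le n-1$. (The negative blocks $[-i,j]$ from \eqref{first parentheses} are not increasing runs, which is precisely why they are absent from positive elements.)

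For the forward implication I would treat the regimes $L(w)\ge 2$, $L(w)=1$ and $L(w)=0$ uniformly after this substitution. Writing out \eqref{secondtypepositive} and its $L(w)=1,0$ analogues block by block yields a word $\langle l_1,r_1\rangle\cdots\langle l_m,r_m\rangle$ whose upper indices begin with $p+k$ copies of $n$ and continue with the strictly decreasing upper indices $g_1>g_2>\cdots$ of $w_r$, and whose lower indices read $i_1>\cdots>i_p>0$, then $k$ zeros, then the weakly decreasing lower indices of $w_r$. Conditions (1)--(3) are then immediate, while (4) and (5) encode exactly the strict/weak inequalities of the normal form: equal consecutive upper indices occur only inside the $t_n$-prefix (where $r=n$), and equal consecutive lower indices occur only among the $\langle 0,n\rangle$ blocks and the terminal zeros of $w_r$ (where $l=0$). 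The single inequality needing care is the junction condition $l_1<i_p$ in the case $k=0$, which is precisely the content of (4) at that position since $i_p>0$.

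For the converse I would start from an arbitrary word satisfying (1)--(5) and reconstruct a positive normal form. By (2) and (5) the blocks with $r_i=n$ form an initial segment and all later blocks have strictly decreasing upper index $<n$; by (1) and (4) the blocks of that segment with $l_i>0$ come first with strictly decreasing lower index, followed by the blocks $\langle 0,n\rangle$. This recovers a prefix $\langle i_1,n\rangle\cdots\langle i_p,n\rangle(\langle 0,n\rangle)^k$ and a tail of blocks with upper index $<n$; read back in the $[l,g]$ notation the tail satisfies conditions (a)/(b) of \eqref{Stembridge}, hence is a positive element of $W^c(B_n)$. Verifying that the inequalities forced by (1) and (4) match the constraints of \eqref{secondtypepositive} — in particular $l_1<i_p$ when $k=0$, and $l=0$ throughout the tail when $k>0$ — shows that the word is literally one of the positive normal forms, so by Theorem \ref{FC} it represents an element of $\pos$. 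Since this block-translation is inverse to the forward one and the normal forms of Theorem \ref{FC} are unique, the representation \eqref{eq normal form blob} is unique as well.

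The main obstacle is organisational rather than conceptual: keeping strict versus weak inequalities straight across the three junctions (between the positive $t_n$-blocks, the repeated $\langle 0,n\rangle$ blocks, and the $W^c(B_n)$-tail) and checking that conditions (4) and (5) capture exactly those transitions. Once the dictionary of blocks above is fixed, each case collapses to reading off inequalities, so no new input beyond the classifications in Theorems \ref{1_2} and \ref{FC} is required.
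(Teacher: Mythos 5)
Your proposal is correct and is essentially the paper's own argument: the paper gives no separate proof but presents the proposition as a direct reformulation, in the $\langle\cdot,\cdot\rangle$-notation, of the case-by-case description of positive elements (derived from Theorems \ref{1_2} and \ref{FC} and the classification of non-positive elements) that immediately precedes it. Your block dictionary and the bookkeeping of where strict versus weak inequalities occur — in particular the junction conditions $l_1<i_p$ when $k=0$ and $l=0$ throughout the tail when $k>0$ — is exactly the translation the authors intend.
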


We call the elements $\langle l_s,r_s \rangle$, $ 1 \leq s \leq k$,  the {\em rigid blocks} of $w$.

\section{Blobbed fully commutative elements and the symplectic blob algebra.}   
\label{section blobbed and symplectic}

In the previous section we found a monomial basis for the two-boundary Temperley-Lieb algebra which was indexed by positive fully commutative elements. In this section we find  a monomial basis for the symplectic blob algebra which is indexed by blobbed fully commutative elements. Let us begin by defining these elements.

\begin{defi} \rm  \label{defi blobbed fully commutative elements}
	An element  $w\in \pos$ is called $I$-blobbed (resp. $J$-blobbed) if it avoids $IJI$ (resp. $JIJ$), where
	\begin{equation}
		I = \left\{  \begin{array}{ll}
		\sigma_1\sigma_3 \cdots \sigma_n 	& \mbox{if } n \mbox{ is odd;}  \\
		  \sigma_1\sigma_3 \cdots \sigma_{n-1} 	& \mbox{if } n \mbox{ is even,}  \\  
		\end{array}  \right. 
		\qquad
		\mbox{and} 
		\qquad
		J = \left\{  \begin{array}{ll}
		\sigma_0\sigma_2 \cdots \sigma_{n-1} 	& \mbox{if } n \mbox{ is odd;}  \\
		  \sigma_0\sigma_2 \cdots \sigma_{n} 	& \mbox{if } n \mbox{ is even.}  \\  
		\end{array}  \right.
	\end{equation}
	Elements which are both $I$-blobbed and $J$-blobbed will be called blobbed. We denote by $~^I\!\blobbed$ (resp. $~^J\!\blobbed$) the set of all $I$-blobbed  (resp. $J$-blobbed) elements in $W^{+c+}(\atc)$. Finally, we denote by $\blobbed$ the set of all blobbed elements in $W^{+c+}(\atc)$. 
	\end{defi}

 Our goal in this section is to demonstrate the following result.
	
\begin{teo}  \label{theorem monomial basis for symplectic}
	The set $\{ b_w \, | \,  w\in \blobbed  \}$ is a $K$-basis for the symplectic blob algebra $\simp $.
\end{teo}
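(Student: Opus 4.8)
The plan is to realize $\simp$ as a two-step quotient of $\twob$ and to push the basis of Theorem~\ref{teo intro basis two boundary} through each quotient, mirroring exactly the passage $\tl \to \tl/Lp \to \twob$ effected by Theorem~\ref{basisLp} and its corollary. Concretely, I would write $\simp \simeq (\twob/\mathcal{I})/\mathcal{J}$, where $\mathcal{I}$ is the two-sided ideal of $\twob$ generated by $IJI-\kappa I$ and $\mathcal{J}$ is the ideal of $\twob/\mathcal{I}$ generated by (the image of) $JIJ-\kappa J$. The goal of the first step is to show that $\{ b_w \ | \ w\in {}^{I}\!\blobbed \}$ descends to a $K$-basis of $\twob/\mathcal{I}$; since $\blobbed = {}^{I}\!\blobbed \cap {}^{J}\!\blobbed$, the second step is the same argument with the roles of $I$ and $J$ interchanged, carried out inside $\twob/\mathcal{I}$.

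For the first step I would follow the template of Theorem~\ref{basisLp}. Using the uniform normal form of Proposition~\ref{proposition normal form posit}, I would first classify the positive elements $w\in\pos$ that fail to be $I$-blobbed, namely those whose normal form contains $IJI$ as a subword, and record precisely how often this subword occurs (the analogue of Proposition~\ref{proposition classification non-left positive}, with the parity of $n$ entering through the shapes of $I$ and $J$). For each such $w$ I would define $\widehat{w}\in\pos$ by contracting the distinguished occurrence of $IJI$ to $I$; by construction $\widehat{w}$ is $I$-blobbed and $l(\widehat{w})<l(w)$. Extending $b_w\mapsto b_{\widehat{w}}$ to a linear map (carrying the scalar prescribed by the relations, just as the bar operator did in Theorem~\ref{basisLp}), the assertion to prove is that
\begin{equation*}
	X_I=\{\, b_x-\kappa\,\widehat{b_x} \ | \ x\in\pos\setminus{}^{I}\!\blobbed \,\}
\end{equation*}
is a $K$-basis of $\mathcal{I}$. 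The four verifications are formally identical to those of Theorem~\ref{basisLp}: linear independence, because the map $\Phi_I$ fixing $b_x$ for $I$-blobbed $x$ and sending $b_x\mapsto b_x-\kappa\,\widehat{b_x}$ otherwise is unipotent triangular for any length-compatible ordering, hence an isomorphism; membership of the generating relator, since $\widehat{IJI}=I$; the containment $X_I\subseteq\mathcal{I}$, obtained by rewriting $IJI\to\kappa I$ under $\twob\to\twob/\mathcal{I}$; and the fact that the span of $X_I$ is a two-sided ideal.

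The main obstacle is the last of these, the ideal-closure check that $U_s\,\Phi_I(b_x)$ and $\Phi_I(b_x)\,U_s$ stay in the span of $X_I$ for every generator $U_s$. In the two-boundary setting the relator $U_1U_0U_1-\kappa_L U_1$ was localized at a single boundary, which is what kept the case analysis of Theorem~\ref{basisLp} manageable; here $IJI-\kappa I$ is \emph{global}, involving all of $U_0,\dots,U_n$, so left or right multiplication by $U_s$ can create or destroy an occurrence of the subword $IJI$ at any position. The crux is therefore a careful combinatorial description of exactly when a positive normal form contains $IJI$ and of how a single generator acts on it. Here the grid $G(w)$ of Definition~\ref{defi grid}—equivalently the rigid-block decomposition \eqref{eq normal form blob}—is the natural bookkeeping device, since containment of $IJI$ becomes a visible configuration of rigid blocks and multiplication by $U_s$ becomes a local modification of the grid. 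I expect the argument to split into a few cases according to the parity of $n$ and the position of $s$ relative to the blocks carrying the $IJI$ pattern, each resolved by a direct computation in the relations \eqref{relations temperley lieb five parameters} and \eqref{relation to pass from TL to two-boundary}.

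Granting that $X_I$ is a basis of $\mathcal{I}$, the corollary argument following Theorem~\ref{basisLp} gives at once that $\{ b_w \ | \ w\in{}^{I}\!\blobbed \}$ is a $K$-basis of $\twob/\mathcal{I}$. Repeating the whole construction with $I$ and $J$ swapped—classifying the $I$-blobbed elements that are not $J$-blobbed, contracting $JIJ$ to $J$, and verifying the corresponding four properties inside $\twob/\mathcal{I}$—then shows that $\{ b_w \ | \ w\in\blobbed \}$ is a $K$-basis of $(\twob/\mathcal{I})/\mathcal{J}\simeq\simp$, which is the statement of the theorem.
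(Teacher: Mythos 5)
Your plan is essentially the paper's own argument: the paper also realizes $\simp$ as $(\twob/Ib)/Jb$, defines a length-decreasing contraction $w\mapsto\bar w$ (and then $w\mapsto\widetilde w$) on the non-blobbed positive elements, and invokes the same unipotent-triangular / four-step verification template of Theorem~\ref{basisLp}, with the grid doing the bookkeeping. The only ingredient you leave implicit is what makes ``the distinguished occurrence of $IJI$'' well defined: the paper settles this via the oblique form (Proposition~\ref{obliques}, Lemma~\ref{lemma two I forces all inthe middle}, Corollary~\ref{coro oblique form}), which shows that all occurrences of $I$ and $J$ in a non-$I$-blobbed positive element bunch into a single alternating run $O_1\cdots O_r(IJ)^kIO_1'\cdots O_s'$, so that contracting one $IJ$ is canonical.
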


We  utilize the same strategy as the one used in the previous section. For this reason and for the sake of brevity we only provide a sketch of the proof of Theorem \ref{theorem monomial basis for symplectic}. The rest of this section is devoted to this aim. The first step in order to reach our goal is to provide a normal form for positive elements in which we can see the occurrences of $IJI$ and $JIJ$. We stress that the normal form for positive elements given in Proposition \ref{proposition normal form posit} is not well suited for this purpose. It is convenient to introduce a graphical interpretation for the elements of $\pos $.

 \begin{defi} \rm \label{defi grid}
	 Let $w=\langle l_1,r_1 \rangle \langle l_2, r_2\rangle \ldots \langle l_k, r_k\rangle \in \pos$.  The \emph{grid} of the  element $w$ is the set
	 \begin{equation}
	 	G(w):= \{ (i,j)\in \mathbb{Z}^2 \, |\,  1\leq i \leq k; \,     l_i \leq j \leq r_i  \},
	 \end{equation}
	 up to horizontal translation. 
\end{defi}

\begin{exa} \rm
Let $w= \langle 7,8 \rangle \langle  4,8 \rangle \langle  3,7 \rangle \langle 1,4\rangle \langle 0,1 \rangle \langle 0,0\rangle \in W^{+c+}(\tilde{C}_8)$.  The grid of $w$ can be described graphically as 
\begin{equation}
	\scalebox{2}{\Heap}
\end{equation}
\end{exa}
From now on we do not distinguish between a grid and its graphical interpretation. This geometric presentation gives rise to another normal form of positive fully commutative elements, in which our point of viewing a fully commutative element is to determine the reduced expression of maximal blocks of commuting generators. Indeed, moving a line with slope  $-2$  from left to right through the grid, we see that each intersection of the grid with such a line is made of commuting generators. 
We call for short {\it oblique of }$G(w)$ (or of $w$) such an intersection.  We  identify a point $(i,j)$ with the generator $\sigma_{j}$. This allows us to identify an oblique with an element of $W(\atc) $ by considering the product of the generators involved in the oblique. We recall that the generators involved in a given oblique commute so it is not necessary to specify an order for the  aforementioned product.  Let us illustrate the above with an example. 

\begin{exa}\rm \label{exa grid}
Let $w= \langle 7,8 \rangle \langle  4,8 \rangle \langle  3,7 \rangle \langle 1,4\rangle \langle 0,1 \rangle \langle 0,0\rangle \in W^{+c+}(\tilde{C}_8)$. 
\begin{equation}
	\scalebox{2}{\Heap}  \qquad 	\oblique
\end{equation}	
In this case we have six obliques. If we label them from left to right as $O_1,\ldots , O_6$ then the corresponding elements in $W(\atc)$ are
\begin{equation}
O_1= \sigma_4; \quad O_2= O_4	=\sigma_1\sigma_3\sigma_5\sigma_7=I;  \quad O_3= \sigma_0\sigma_2\sigma_4\sigma_6\sigma_8=J; \quad O_5 = \sigma_0\sigma_4\sigma_6\sigma_8; \quad O_6= \sigma_7 .
\end{equation}
\end{exa}

 \begin{propos}\label{obliques}  Let $w\in W^{+c+}(\atc)$. Then,  $w$ is the product of the obliques of $G(w)$ taken from left to right.  Furthermore, if we write the generators involved in each oblique in increasing order then we obtain a well-defined reduced expression for $w$, which we call the oblique form of $w$.
 \end{propos}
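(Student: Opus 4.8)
The plan is to translate the geometric notion of an oblique into an algebraic one, and then to reorganize the block form of $w$ (Proposition \ref{proposition normal form posit}) into the oblique form using only commutation relations. Write $w = \langle l_1, r_1\rangle \cdots \langle l_k, r_k\rangle$, which is a reduced expression by Proposition \ref{proposition normal form posit}, and identify each letter of this word with the grid point $(i,j) \in G(w)$ recording that the generator $\sigma_j$ comes from the $i$-th rigid block. Reading the block form from left to right (blocks in increasing $i$, and inside a block in increasing $j$) totally orders these grid points; I will call this the block order. First I would record that two grid points $(i,j)$ and $(i',j')$ lie on a common oblique precisely when $2i+j = 2i'+j'$, so that the obliques are met from left to right in order of increasing value of the functional $\psi(i,j) := 2i+j$. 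Two points on the same oblique satisfy $j - j' = 2(i'-i)$, so their labels differ by a nonzero even amount; hence $|j-j'|\ge 2$ and $\sigma_j,\sigma_{j'}$ commute. In particular the product of the generators in a single oblique is independent of the order of the factors, which already yields the well-definedness asserted in the statement.

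The technical core is the following monotonicity, to be proved by a short case check. \emph{Key Lemma:} if $(i,j)$ precedes $(i',j')$ in the block order and $\sigma_j,\sigma_{j'}$ do not commute (that is, $|j-j'|\le 1$, allowing $j=j'$), then $\psi(i,j) < \psi(i',j')$. Indeed, if $i=i'$ then $j'=j+1$ and $\psi$ increases by $1$; if $i<i'$ then $\psi(i',j') - \psi(i,j) = 2(i'-i) + (j'-j) \ge 2 - 1 = 1 > 0$, since $i'-i \ge 1$ and $j'-j \ge -1$. Equivalently, any two letters that fail to commute receive distinct $\psi$-values, the earlier one being strictly smaller. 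Here the meaning of ``fail to commute'' is read off the Coxeter graph of $\tilde C_n$: $\sigma_a,\sigma_b$ commute iff $|a-b|\ge 2$, the end pairs $\{\sigma_0,\sigma_1\}$ and $\{\sigma_{n-1},\sigma_n\}$ being non-commuting.

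With the Key Lemma in hand I would sort the block word into $\psi$-increasing order by a bubble sort using only legal commutations. Let $O_1$ be the oblique of minimal $\psi$. For a letter $x=(i,j)$ with $x\in O_1$, every letter $y$ lying to its left in the current word commutes with $x$: if $y$ also lies in $O_1$ this is the commutation noted in the first paragraph, while if $y\notin O_1$ then $y$ must commute with $x$, for otherwise the Key Lemma would force $\psi(y) < \psi(x)$, contradicting the minimality of $\psi(x)$. Hence each letter of $O_1$ can be commuted to the front, and after doing so (and freely rearranging the commuting factors of $O_1$ into increasing $j$) the remaining word is exactly the block word with the $O_1$-letters deleted, to which the same argument applies with $O_2$, then $O_3$, and so on. Since every move is a commutation relation, the resulting word $O_1 O_2\cdots O_m$ represents the same element $w$ and has the same length as the block form, hence is reduced. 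This proves both assertions simultaneously: $w$ is the left-to-right product of its obliques, and writing each oblique in increasing order gives a reduced expression.

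I expect the main obstacle to be chiefly expository rather than deep: pinning down the correspondence between the geometric slope-$-2$ obliques and the level sets of $\psi$, and checking the boundary cases of the Key Lemma (same block versus consecutive blocks, and the end generators $\sigma_0,\sigma_1$ and $\sigma_{n-1},\sigma_n$). Once the monotonicity of $\psi$ along non-commuting precedences is established, the sorting argument is routine and, pleasantly, requires no appeal to the general theory of heaps; the only external input is that the block form of Proposition \ref{proposition normal form posit} is reduced, so that preserving length under commutations preserves reducedness.
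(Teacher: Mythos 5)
Your proof is correct and follows essentially the same route as the paper's: both arguments peel the leftmost oblique off the front of the block form of Proposition \ref{proposition normal form posit} using only commutations and then iterate over the remaining obliques. Your functional $\psi(i,j)=2i+j$ and the Key Lemma merely make explicit the commutation step that the paper asserts in a single sentence, and your word-level bookkeeping lets you bypass the paper's (unproved but easy) auxiliary observation that deleting the leftmost oblique leaves a positive element.
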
    
 
 \begin{proof}
 	The result follows by induction on the number of obliques of $G(w)$ once we notice that any generator involved in the leftmost oblique  can be pushed to the left of the expression (\ref{eq normal form blob}) for $w$ and that the element obtained from $w$ by eliminating its leftmost oblique is  positive. 
 \end{proof}
 
 \begin{exa}\rm
 	If $w$ is as in Example \ref{exa grid} then its oblique form is given by 
\begin{equation}
 \sigma_4\sigma_1\sigma_3\sigma_5\sigma_7\sigma_0\sigma_2\sigma_4\sigma_6\sigma_8\sigma_1\sigma_3\sigma_5
 \sigma_7\sigma_0\sigma_4\sigma_6\sigma_8\sigma_7 	.
\end{equation}
 \end{exa}
 
  Now let $(W,S)$ be a Coxeter system and  let $w$ be a FC element in $W$.  By Matsumoto's theorem and the definition of full  commutativity, we know that the number of occurrences of a generator $s\in S$ in  $w$ is well-defined. Moreover, suppose that $s$ occurs $k$ times in $w$. Given a reduced expression $\underline{w}$ of $w$, let us order the  occurrences of $s$ in $\underline{w}$ from left to right, for example $s^i$ for $1\leq i \leq k$. Again by Matsumoto's theorem and full commutativity, the relative positions of $s^1,  s^2, \dots , s^k$ are independent of the reduced expression. In other terms we can mark the occurrences of any simple reflexion in some reduced expression of $w$. We choose to mark by coloring in what follows.

 \begin{lem} \label{lemma Sadek to prove the only if}
 Let $(W,S)$ be a Coxeter system of graph $\Gamma$, let $s,t,u\in S$ be such that the subgraph of  $\Gamma$ determined by $s,t,u$ is connected and $s$ commutes with $u$, suppose  $s,t,u \in Supp(w)$ for $w\in W^c$ then the following holds: ${\blue{t}}$ occurs between ${\red{s}}$ and ${\green{u}}$ in a reduced expression of $w$ if and only if ${\blue{t}}$ occurs between ${\red{s}}$ and ${\green{u}}$ in any reduced expression of $w$.
 \end{lem}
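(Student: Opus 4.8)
The plan is to reduce everything to a single structural fact about commutation classes. First I would unwind the hypotheses: since the full subgraph of $\Gamma$ on $\{s,t,u\}$ is connected and there is no edge $s$--$u$ (because $s$ and $u$ commute), the only possibility is that both edges $s$--$t$ and $t$--$u$ are present. Hence $t$ commutes with neither $s$ nor $u$, whereas $s$ and $u$ do commute. I would also recall the convention fixed in the paragraph preceding the lemma: for a fully commutative $w$ the occurrences of each generator are canonically ordered from left to right, and this ordering is independent of the chosen reduced expression, so the marked letters $s$, $t$, $u$ make simultaneous sense in every reduced word of $w$. With this in place, ``$t$ occurs between $s$ and $u$'' means that, writing $p_s,p_t,p_u$ for the positions of the three marked letters in a given reduced word, one has $\min(p_s,p_u)<p_t<\max(p_s,p_u)$.

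The key step, which I would isolate as the heart of the argument, is the following observation: if $a,b\in S$ do not commute, then the relative order of a fixed marked occurrence of $a$ and a fixed marked occurrence of $b$ is the same in every reduced expression of $w$. To prove it I would use that $w$ is fully commutative, so by Matsumoto's theorem together with the defining property of full commutativity any two reduced expressions of $w$ are connected by a finite sequence of commutation moves $xy\mapsto yx$ (with $xy=yx$ in $W$). A single such move interchanges exactly the two adjacent commuting letters it acts on and leaves the relative order of every other pair of letters unchanged. Since $a$ and $b$ do not commute, a marked $a$ and a marked $b$ can never be the pair swapped by such a move; therefore their relative order is preserved along the whole sequence, which proves the invariance.

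Finally I would apply this observation twice, to the non-commuting pairs $(s,t)$ and $(t,u)$: the order of the marked $s$ and the marked $t$, and the order of the marked $t$ and the marked $u$, are each independent of the reduced expression. Whether the marked $t$ lies between the marked $s$ and the marked $u$ is then a Boolean function of these two relative orders alone: $t$ is between them exactly when either $p_s<p_t<p_u$ or $p_u<p_t<p_s$, and a four-way case analysis on the two now-fixed orders fixes the answer once and for all, independently of the order of $s$ and $u$ themselves. Hence the betweenness property holds in one reduced expression if and only if it holds in every one, which is the claim. The only genuinely substantive point is the invariance observation of the second paragraph; the edge bookkeeping of the first paragraph and the reduction of betweenness to two pairwise comparisons in the third are routine, so I do not expect any real obstacle beyond stating the commutation-move argument carefully.
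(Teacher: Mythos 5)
Your proof is correct and follows exactly the route the paper intends: the paper's own proof is the one-line assertion that the claim "follows directly from Matsumoto's theorem and the definition of full commutativity," and your argument is precisely the expansion of that remark (connectedness of reduced words by commutation moves, invariance of the relative order of marked occurrences of non-commuting generators, and the reduction of betweenness to the two fixed pairwise orders for $(s,t)$ and $(t,u)$). No gap; your write-up simply makes explicit what the paper leaves implicit.
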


\begin{demo}
	The result follows directly from Matsumoto's theorem and the definition of full commutativity.
\end{demo}



  \begin{propos}  \label{lemma blobbed if part}
 Let $w\in W^{+c+}(\atc)$, then $w$ contains $I$ (resp. $J$, resp. $IJI$, resp. $JIJ$) if and only if  $G(w)$  contains  $G(I)$  (resp. $G(J)$, resp. $G(IJI)$, resp. $G(JIJ)$).
  \end{propos}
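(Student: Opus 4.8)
The plan is to reduce the statement to a single structural fact about obliques and then prove the two implications separately, using Proposition~\ref{obliques} for the ``if'' part and Lemma~\ref{lemma Sadek to prove the only if} for the ``only if'' part. First I would record the shapes of the four grids. Writing $I$ and $J$ in the normal form of Proposition~\ref{proposition normal form posit} shows that $G(I)$ and $G(J)$ are each a \emph{single} oblique (their points occupy exactly the odd, resp. even, levels and any two consecutive of them differ by $(1,-2)$), while $G(IJI)$ and $G(JIJ)$ consist of \emph{three consecutive} obliques, of shapes $I,J,I$ and $J,I,J$ respectively. The crucial elementary observation is that a point $(i,j)$ lies on the oblique indexed by $c=j+2i$, so all generators occurring in one oblique share the parity of $c$; together with the monotonicity conditions (1)--(5) of Proposition~\ref{proposition normal form posit}, this shows that for each level $\ell$ the set of columns of $G(w)$ meeting level $\ell$ is an interval, and that these intervals shift weakly to the left as $\ell$ increases. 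With these preliminaries, ``$G(w)$ contains $G(X)$'' means precisely that a horizontal translate of $G(X)$ is a subset of $G(w)$: for $X=I,J$ this says some oblique of $G(w)$ meets every odd (resp. even) level, and for $X=IJI,JIJ$ it says three consecutive obliques do so in the prescribed pattern.

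For the ``if'' direction I would argue directly from the oblique form. Suppose $G(w)\supseteq G(I)$, so a single oblique $O$ of $G(w)$ meets every odd level. Since the generators of $O$ pairwise commute, when reading the oblique form of $w$ (Proposition~\ref{obliques}) one may reorder the letters of $O$ so that $\sigma_1\sigma_3\cdots\sigma_n$ appears as a contiguous factor; hence $w$ contains $I$, and symmetrically for $J$. For $IJI$, the containment $G(w)\supseteq G(IJI)$ produces three consecutive obliques realizing $I,J,I$; because a horizontal translation shifts $c=j+2i$ by an even amount it preserves consecutiveness of $c$-values, so these obliques occur consecutively in the oblique form, and the same reordering inside each of them exhibits a reduced word for $IJI$ as a contiguous factor. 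The case $JIJ$ is identical.

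For the ``only if'' direction, suppose $w$ contains $I$ and fix a reduced expression of $w$ in which a reduced word for $I$ occurs as a contiguous factor; this singles out one occurrence of each $\sigma_{2m+1}$, and since the factor contains no even generator, no $\sigma_{2m}$ lies between the chosen $\sigma_{2m-1}$ and $\sigma_{2m+1}$. Applying Lemma~\ref{lemma Sadek to prove the only if} to each triple $(\sigma_{2m-1},\sigma_{2m},\sigma_{2m+1})$ (a connected path in the Coxeter graph with commuting ends) transfers this ``nothing in between'' property to the oblique form. I would then prove the geometric claim that, in $G(w)$, two points at levels $\ell+1$ and $\ell-1$ with no point of level $\ell$ between them on the $c$-scale must lie on one oblique: using the interval-of-columns description, any configuration in which the two points are more than one oblique apart forces either an intermediate column carrying level $\ell$ (a forbidden ``between'' occurrence) or a string of equal values $r_i=\ell-1\neq n$ (resp. $l_i=\ell+1\neq 0$), contradicting condition (5) (resp. (4)). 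Chaining this over $m$ places all chosen odd generators on a common oblique, giving $G(w)\supseteq G(I)$; the argument for $J$ is symmetric. For $IJI$ and $JIJ$ one runs the same analysis for the two flanking copies and the central one, and then uses Lemma~\ref{lemma Sadek to prove the only if} once more across the boundaries (on triples involving $\sigma_0$ and $\sigma_n$, whose doubled bonds make them the relevant ends) to force the three resulting obliques to be consecutive and correctly stacked, yielding a translate of $G(IJI)$ inside $G(w)$.

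The main obstacle I anticipate is exactly this last geometric translation: passing from the purely combinatorial ``betweenness'' output of Lemma~\ref{lemma Sadek to prove the only if} to the statement that the relevant generators sit on one (and, for $IJI$ and $JIJ$, on three consecutive) oblique(s). The delicate point is that a level $\ell$ may be \emph{absent} at the place where two odd generators meet, so ``same oblique'' cannot be read off from the presence of $\sigma_\ell$ alone; it must instead be forced by the monotonicity constraints (4)--(5), and the boundary generators $\sigma_0,\sigma_n$ require separate bookkeeping. Everything else --- the shapes of $G(I),G(J),G(IJI),G(JIJ)$ and the whole ``if'' direction --- is routine once the oblique form is in hand.
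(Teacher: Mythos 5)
Your proposal is correct and follows essentially the same route as the paper: the ``if'' direction is read off from the oblique form via Proposition~\ref{obliques}, and the ``only if'' direction combines the betweenness invariant of Lemma~\ref{lemma Sadek to prove the only if} with the rigidity conditions of Proposition~\ref{proposition normal form posit} to force the marked generators into consecutive columns, i.e.\ onto one oblique (resp.\ three consecutive obliques). The paper phrases the forcing step in terms of rigid blocks rather than obliques and, for the $IJI$/$JIJ$ gluing, additionally invokes the invariance of the position of $\sigma_{n-1}$ relative to the two marked occurrences of $\sigma_n$ (an invariant for repeated occurrences of a single generator, not literally an instance of Lemma~\ref{lemma Sadek to prove the only if}), but your counting variant of the lemma on the triple $(\sigma_{n-2},\sigma_{n-1},\sigma_n)$ achieves the same effect, so the differences are cosmetic.
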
    
 \begin{proof}
  The ``if'' part follows immediately from  Proposition \ref{obliques} once we remember that $G(w)$ was defined up to horizontal translation.\\ 
  For proving the  ``only if'' part we assume that $n$ is odd (the case $n$  even is treated similarly). Suppose that $w$ contains $I$, this gives a reduced expression  $\underline{w}$ of $w$ in which we can see $I=\sigma_n \sigma_{n-2}\cdots \sigma_3\sigma_1$. By marking the letters involved in $I$ we obtain  
  \begin{equation}
  	\underline{w} = u {\red{\sigma_n}} {\red{\sigma_{n-2}}} \dots  {\red{\sigma_3}}  {\red{\sigma_1}}   v, 
  \end{equation}
  where $l(w)= l(u)+l(v) + (n+1)/2$. We consider the normal form of $w$ given by Proposition \ref{proposition normal form posit}. In this form, 
  the rigid block  of some marked ${\red{s}} \in S$ is uniquely determined by ${\red{s}}$, it is to be denoted by $[{\red{s}}] $. 
  In the notation of Proposition \ref{proposition normal form posit}, we say that the rigid block   $\langle l_s,r_s \rangle$
   is directly on the left (resp. right)  of the rigid block    $\langle l_t,r_t \rangle$ if $t=s+1$ (resp. $t=s-1$).  
  
  Now $[{\red{\sigma_n}}]$ and $[{\red{\sigma_{n-2}}}]$ are consecutive. Otherwise, any rigid block between those two blocks must contain $\sigma_{n-1}$, which contradicts  - by Lemma \ref{lemma Sadek to prove the only if} -  the fact that in $\underline{w}$ the generator  $\sigma_{n-1}$ does not occur between $ {\red{\sigma_n}}$ and $ {\red{\sigma_{n-2}}}$. Moreover, $[{\red{\sigma_n}}] $ is on the left of $[{\red{\sigma_{n-2}}}]$, otherwise $\sigma_{n-1}$ occurs between them by the normal form of Proposition \ref{proposition normal form posit}. We sum up: $[{\red{\sigma_n}}] $ is directly on the left of $[{\red{\sigma_{n-2}}}]$. We apply the same argument to $ {\red{\sigma_{n-2}}}$ and $ {\red{\sigma_{n-4}}}$ and so on,  arriving to ${\red{\sigma_{3}}}$ and ${\red{\sigma_{1}}}$. That means: the $\frac{ n+1}{2}$ rigid blocks $[{\red{\sigma_n}}] [{\red{\sigma_{n-2}}}] \dots [{\red{\sigma_3}}] [{\red{\sigma_1}}]$ are consecutive in this order.  Consequently,   $G(w)$  contains  $G(I)$. The same argument applies replacing $I$ by $J$.\\ 
  
  Now suppose that $w$ contains $IJI$. Then,  there exists  a reduced expression $\underline{w}$ of $w$ in which we can see $IJI$. By marking the letters involved in $IJI$ we obtain
  \begin{equation}
  	\underline{w} = u  {\red{\sigma_n}} \dots  {\red{\sigma_3}} {\red{\sigma_1}}  {\blue{\sigma_{n-1}}}  \dots  {\blue{\sigma_{2}}}  {\blue{\sigma_{0}}} {\green{\sigma_{n}}} \dots  {\green{\sigma_{3}}}  {\green{\sigma_{1}}} v. 
  \end{equation} 
  where $l(w)= l(u)+l(v) + 3(n+1)/2$. Now consider the rigid block $[ {\red{\sigma_n}}]$, it is directly on the left of $[{\green{\sigma_{n}}}]$ due to the conditions in Proposition \ref{proposition normal form posit}. Moreover, it is directly on the left of   $[ {\red{\sigma_{n-2}}}]$ by the argument above and finally it is directly on the left of $[ {\blue{\sigma_{n-1}}}]$, because the position of $ {\blue{\sigma_{n-1}}}$ between  $ {\red{\sigma_n}}$ and ${\green{\sigma_{n}}}$ is independent of the reduced expression. We sum up: $[{\green{\sigma_{n}}}]= [{ \blue{\sigma_{n-1}}}] =[ {\red{\sigma_{n-2}}}]$ and this rigid block  is directly on the right of  $[ {\red{\sigma_n}}]$. By applying the very same argument $(n-3)/2$ times we obtain that 
  $[{\green{\sigma_{i}}}]= [ {\blue{\sigma_{i-1}}}] =[{\red{\sigma_{i-2}}}]$  and  that this block  is directly on the right of  $[{\red{\sigma_{i}}}]$ for $i$ odd, $ 3 \leq i \leq n$. Finally, by considering the relative positions of $\red{\sigma_1}$, $\green{\sigma_1}$ and $\blue{\sigma_0} $ we conclude that  $[{\green{\sigma_{1}}}]= [ {\blue{\sigma_{0}}}]$ and that this block is directly on the right of $[{\red{\sigma_1}}]$.  All the above implies that 
  $G(w)$ contains $G(IJI)$.  The same argument applies replacing $IJI$ by $JIJ$.
  \end{proof}

\begin{rem}\rm
	Proposition \ref{lemma blobbed if part}  states  a strong property that does not hold in general.  For instance let $w= \sigma_1 \sigma_2 \sigma_3 \sigma_0 \sigma_1 \sigma_2$ and   $w'= \sigma_2 \sigma_1$. Then, $G(w') \subset G(w)$ as is illustrated in \eqref{eq remark contain not contain}. However,  $w$ does not contain $w'$. 
\end{rem}

\begin{equation}\label{eq remark contain not contain}
	\remarknotcontained
\end{equation}

 \begin{lem} \label{lemma two I forces all inthe middle}
 	Let $w\in \pos$. Let $O_1, \ldots , O_m$ be the obliques of $G(w)$ labelled from left to right. If $O_i=O_{i+2j}= I$  for some positive integers $i $ and $j$ then 
 	\begin{equation}  \label{obliques lema IJIJIJI A}
 		O_{i+1}=O_{i+3}=\ldots = O_{i+2j-1}=J \quad \mbox{ and }\quad  O_{i+2}=O_{i+4}=\ldots = O_{i+2j-2}=I.
 	\end{equation}
 	Similarly, if  $O_i=O_{i+2j}= J$ for some positive integers $i $ and $j$ then 
 	\begin{equation}  \label{obliques lema IJIJIJI B}
 		O_{i+1}=O_{i+3}=\ldots = O_{i+2j-1}=I \quad \mbox{ and }\quad  O_{i}=O_{i+3}=\ldots = O_{i+2j-1}=J.
 	\end{equation}
 \end{lem}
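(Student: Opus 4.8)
The plan is to reformulate everything in terms of the grid $G(w)$ and argue geometrically. First I would record coordinates: identifying the point $(i,j)$ of $G(w)$ with the generator $\sigma_j$, an oblique is exactly the set of grid points lying on a line $2i+j=c$ for a fixed integer $c$, which I will call the \emph{level} of the oblique. Two observations drive the whole argument. On the one hand, every generator occurring in a single oblique has index $j\equiv c\pmod 2$, so an oblique is entirely ``odd'' or entirely ``even''; in particular $O_\ell=I$ forces $c$ odd and $O_\ell=J$ forces $c$ even (recall $\sigma_0\in J$). On the other hand, using the monotonicity conditions (1)--(5) of Proposition \ref{proposition normal form posit} (namely $l_1\ge\cdots\ge l_k$ and $r_1\ge\cdots\ge r_k$), the set of occupied levels is a contiguous interval of integers and the leftmost oblique $O_1$ is the one of smallest level; hence $O_\ell$ has level $c_{\min}+\ell-1$, and consecutive obliques differ in level by exactly $1$.

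With this dictionary, I would first treat the case $O_i=O_{i+2j}=I$. Writing $c_0$ for the level of $O_i$, the level of $O_{i+t}$ is $c_0+t$, so by the parity observation $O_{i+t}$ is an odd oblique when $t$ is even and an even oblique when $t$ is odd. It therefore suffices to prove that each intermediate oblique is \emph{full}, i.e. that it meets every generator of its parity from the bottom generator ($\sigma_0$ or $\sigma_1$) up to the top one ($\sigma_n$ or $\sigma_{n-1}$); a full oblique is then forced to equal $J$ (even levels) or $I$ (odd levels). The key step is a sandwiching argument: fix a column $i'$ and a target index $j'=c_0+t-2i'$. If column $i'$ already contains both the generator $\sigma_{c_0-2i'}$ coming from $O_i=I$ and the generator $\sigma_{c_0+2j-2i'}$ coming from $O_{i+2j}=I$, then since $[l_{i'},r_{i'}]$ is an interval and $j'$ lies between these two indices (because $0\le t\le 2j$), the point $(i',j')$ belongs to $G(w)$ as well.

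This settles all \emph{interior} columns, and the remaining work is at the boundary columns, where only one (or neither) of the two $I$-obliques is present; these carry the generators near the top and near the bottom. There I would invoke monotonicity directly: the columns in which the grid reaches the top generator form a prefix $\{1,\dots,p\}$ (as $r_s$ is non-increasing) and those reaching the bottom generator form a suffix $\{q,\dots,k\}$ (as $l_s$ is non-increasing); the column in which the oblique of level $c$ should meet the top (resp. bottom) generator moves monotonically with $c$, so membership in the prefix (resp. suffix) at the two endpoints $c_0$ and $c_0+2j$ propagates to every intermediate level. Combining the interior and boundary analyses shows every $O_{i+t}$ is full, giving $O_{i+t}=I$ for $t$ even and $O_{i+t}=J$ for $t$ odd, which is \eqref{obliques lema IJIJIJI A}. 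The case $O_i=O_{i+2j}=J$ is entirely symmetric, exchanging the roles of the two parities and of $I$ and $J$, and yields \eqref{obliques lema IJIJIJI B}.

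I expect the boundary-column analysis to be the main obstacle: away from the extremes the sandwiching argument is clean, but establishing that the intermediate obliques genuinely reach the very top and very bottom generators requires the careful prefix/suffix bookkeeping above, and this is also where the only real difference between $n$ even and $n$ odd appears, since the top generators of $I$ and of $J$ differ by $1$ and which parity carries $\sigma_n$ versus $\sigma_{n-1}$ swaps. A secondary point to nail down is the preliminary claim that the occupied levels form a single integer interval with unit gaps between consecutive obliques, which relies on conditions (4) and (5) ruling out degenerate vertical jumps in the staircase.
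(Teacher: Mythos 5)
Your proposal is correct and takes essentially the same route as the paper: the paper's own proof is a one-sentence assertion that the conditions of Proposition \ref{proposition normal form posit} force all the points ``in between'' the two occurrences of $G(I)$ to be present (supported only by a figure), and your level-set formalization of obliques, the interval/sandwich argument for interior columns, and the prefix/suffix monotonicity argument for the boundary columns constitute a rigorous elaboration of exactly that assertion. The delicate points you flag --- contiguity of the occupied levels via conditions (4)--(5), and the top/bottom bookkeeping where the parities of $I$ and $J$ differ --- are indeed where the real content lies, and they do go through.
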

 
 \begin{proof}
 We only prove \eqref{obliques lema IJIJIJI A}. Equation \eqref{obliques lema IJIJIJI B} is treated similarly. If we have two $G(I)$'s in a grid of a positive element then, in order to satisfy the conditions in Proposition \ref{proposition normal form posit}, we are forced to locate all the points ``in between''of  the two occurrences of $G(I)$, as illustrated in \eqref{obliques IJIJIJIJI} when $n=14$ and $j=7$. The result follows.  
 \end{proof}

 \begin{equation}\label{obliques IJIJIJIJI}
 	\scalebox{.75}{\obliquesIJIJIJIJIJI }
 \end{equation}	
  
 \begin{coro} \label{coro oblique form}
 	Let $w\in \pos - ~^I\!\blobbed$. Then, the oblique form of $w$ is given by 
 	\begin{equation} \label{oblique form with the IJ}
 		O_1\ldots O_r (IJ)^kI O_1'\ldots O_s',
 	\end{equation}
 	for some integer $k\geq 1$ and some obliques $O_i$ and $O_i'$ such that $O_i\neq I$, $O_i'\neq J$, $O_i'\neq I$ for all $i$ and $O_i\neq J$ for $i\neq r$.
 \end{coro}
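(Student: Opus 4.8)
The plan is to pass entirely to the language of obliques and prove the statement as a structural property of the sequence $O_1,\dots,O_m$, which by Proposition \ref{obliques} is exactly the oblique form of $w$. Since $w\in\pos-{}^I\!\blobbed$, by Definition \ref{defi blobbed fully commutative elements} $w$ is not $I$-blobbed, i.e. $w$ contains $IJI$; Proposition \ref{lemma blobbed if part} then gives $G(IJI)\subseteq G(w)$, which forces three \emph{consecutive} obliques $O_a,O_{a+1},O_{a+2}$ equal to $I,J,I$. In particular the set $\{i\mid O_i=I\}$ is nonempty, so I would set $q=\max\{i\mid O_i=I\}$ and let $p$ be the least index $\le q$ such that $O_p,O_{p+1},\dots,O_q$ is the alternating word $IJIJ\cdots JI$. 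The occurrence of $IJI$ guarantees $q-p\ge 2$ with $q-p$ even, so writing $k=(q-p)/2\ge 1$ we get $O_p\cdots O_q=(IJ)^kI$; this is the central block, and it is the content of Lemma \ref{lemma two I forces all inthe middle} (illustrated in \eqref{obliques IJIJIJI}) that the interlacing $J$'s inside $[p,q]$ are indeed forced.

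Next I would control the obliques outside $[p,q]$, beginning with the generator $I$. By the choice of $q$, no oblique with index $>q$ equals $I$, so $O_i'\neq I$ for every suffix letter. For the prefix, if some $O_{p'}=I$ with $p'<p$, then (using that consecutive $I$-obliques sit at even distance, which is part of the filling phenomenon of Lemma \ref{lemma two I forces all inthe middle}) the pair $O_{p'}=O_{p}=I$ forces the obliques between them to alternate $IJ\cdots JI$; hence the maximal alternating run actually begins at $p'$, contradicting minimality of $p$. Therefore every $I$-oblique lies in $[p,q]$, giving $O_i\neq I$ throughout the prefix as well.

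The genuinely delicate point, and the one I expect to be the main obstacle, is the \emph{asymmetric} behaviour of $J$: a single $J$ is permitted immediately before the central block (position $r=p-1$), whereas no $J$ may occur in the suffix. The combinatorial half is handled by Lemma \ref{lemma two I forces all inthe middle}: if $O_c=J$ is the first $J$ with $c>q$, then applying the lemma to the pair $O_{q-1}=O_c=J$ forces $O_{q+2}=I$ whenever $c\ge q+3$, contradicting the maximality of $q$; so the only possibility left open by the lemma is a $J$ abutting $O_q$. Ruling out even this last case is where the purely order-theoretic argument is insufficient and one must invoke the monotonicity built into Proposition \ref{proposition normal form posit}: both $(l_i)$ and $(r_i)$ are non-increasing, so the grid is genuinely asymmetric under left--right reflection. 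Since the rightmost $I$ already attains the top row $n$, while a full $J$ reaches row $0$, this monotonicity of the $r_i$ (resp. $l_i$) is what forbids a full even diagonal from abutting the block on the suffix side, yet tolerates one on the side of the first $I$ of the block; this is precisely the input that breaks the symmetry and selects the permitted position $r$ in the statement. I expect the bulk of the work to be this endpoint analysis, everything else being bookkeeping on the alternating run.

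Finally I would set $r=p-1$, relabel $O_1,\dots,O_r$ as the prefix and $O_{q+1},\dots,O_m$ as $O_1',\dots,O_s'$, and collect the conclusions of the previous two paragraphs: $k\ge 1$, $O_i\neq I$ for all $i$, $O_i\neq J$ for $i\neq r$, and $O_i'\neq I$, $O_i'\neq J$ for all $i$, which is the asserted oblique form $O_1\dots O_r(IJ)^kIO_1'\dots O_s'$.
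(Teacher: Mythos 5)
Your overall strategy---pass to the oblique sequence, isolate the maximal alternating run of $I$'s and $J$'s containing the forced occurrence $I,J,I$, and then control the endpoints---is the right one, and your treatment of the $I$-obliques (maximality of $q$, minimality of $p$, Lemma \ref{lemma two I forces all inthe middle} to fill in the run) is essentially what the paper's one-line proof intends. Two caveats even there: the fact that two $I$-obliques (resp.\ two $J$-obliques) always sit at even distance is not ``part of'' Lemma \ref{lemma two I forces all inthe middle}, whose \emph{hypothesis} already assumes the even distance; it needs a separate grid argument (both full $I$-obliques meet the same extreme row, and the oblique through $(i,j)$ is indexed by $2i+j$). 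You also never exclude a second $J$ in the prefix at distance greater than one from the block, though the same parity argument disposes of it.

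The genuine gap is exactly where you predicted the bulk of the work would be: the claim that a $J$-oblique may immediately precede the alternating block but cannot immediately follow it. No monotonicity argument can close this, because the claim is false. Take $n=3$ and $w=\langle 3,3\rangle\langle 1,3\rangle\langle 0,2\rangle\langle 0,0\rangle$. This satisfies all five conditions of Proposition \ref{proposition normal form posit}, so $w\in W^{+c+}(\tilde C_3)$; its grid has exactly four obliques, which read $I,J,I,J$ from left to right (here $I=\sigma_1\sigma_3$, $J=\sigma_0\sigma_2$). Hence $w$ contains $IJI$ and lies in $W^{+c+}(\tilde C_3)-{}^I W^c_b(\tilde C_3)$, yet any writing of its oblique form as $O_1\cdots O_r(IJ)^kIO_1'\cdots O_s'$ with no $I$ outside the central block forces $O_1'=J$, contradicting the stated constraints. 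Note that Lemma \ref{lemma two I forces all inthe middle} applied to the two $J$'s at distance $2$ merely forces the oblique between them to be $I$, which it is; the cited lemmas are left--right symmetric and so cannot yield the asymmetric conclusion either. The statement that is actually provable (and all that the subsequent bar-map construction uses) must tolerate a single $J$ adjacent to the block on \emph{either} side, i.e.\ $O_i'\neq J$ for $i\neq 1$ in place of $O_i'\neq J$ for all $i$. So the step you flagged as delicate is not merely hard: as stated it is unprovable, and your proposal cannot be completed without first amending the statement.
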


\begin{demo}
	The result follows by a direct application of Proposition \ref{obliques} and Lemma \ref{lemma two I forces all inthe middle}.
\end{demo} 
 
 \medskip
We are now ready to reapply the strategy used in the previous section in order to prove Theorem \ref{theorem monomial basis for symplectic}. We recall from Section \ref{section algebras} that $SB_n(q,Q,k) $ is the quotient of $\twob $ by the ideal generated by the elements $IJI-\kappa I $ and $  JIJ -\kappa J$. We define $Ib$ to be the ideal of $ \twob$ generated by $IJI-\kappa I$. We are going to find a $K$-basis for this
 ideal. To this end we define a map $ \bar\space : \pos - ~^I\!\blobbed \longrightarrow \pos  $ given by
 \begin{equation}
 	\bar{w } = O_1\ldots O_r (IJ)^{k-1}I O_1'\ldots O_s',
 \end{equation}
 where $w$ is as in \eqref{oblique form with the IJ}. We define a $K$-linear map 
 \begin{equation}
 	\bar \space : \mbox{Span}_K \{  b_w \, | \, w\in \pos - ~^I\!\blobbed    \} \longrightarrow \twob
 \end{equation}
 determined by $\overline{b_w} = b_{\bar{w}}$.  By using the same methods as the ones used  in the proof of Theorem \ref{basisLp}, we can see that  the set
 \begin{equation}
 Y:= \{ b_w - \kappa 	\overline{b_{ {w} }} \, | \, w\in \pos - ~^I\!\blobbed    \} 	
 \end{equation}
is a $K$-basis for the ideal $Ib$. Furthermore, one can see that the set 
\begin{equation}
	Y\cup \{ b_w \, | \, w\in  ~^I\!\blobbed   \}
\end{equation}
 is a $K$-basis for $\twob$ and, therefore, $\{ b_w \, | \, w\in  ~^I\!\blobbed   \}$ is a $K$-basis for the quotient $\twob / Ib $. 
 
 Let $Jb$ be the ideal of $\twob / Ib $ generated by the element $JIJ-\kappa J$.  We notice that the oblique form for an element $I$-blobbed which is not $J$-blobbed reduces to 
 \begin{equation} \label{eq to define widetilde blobbed}
 	O_1\ldots O_r JIJ O_1'\ldots O_s',
 \end{equation}
 for some obliques $O_i$ and $O_i'$ different from $I $ and $J$. We  define a map 
 \begin{equation}
 	\widetilde\space:  ~^I\!\blobbed - ~^J\!\blobbed  \longrightarrow \blobbed
 \end{equation}
 given by 
 \begin{equation}
 	\widetilde{w} = 	O_1\ldots O_r J O_1'\ldots O_s',
 \end{equation}
 if  $w$ is as in \eqref{eq to define widetilde blobbed}. Then, we extend this map to a $K$-linear map 
 \begin{equation}
 	\widetilde \space : \mbox{Span}_K \{  b_w \, | \, w\in  ~^I\!\blobbed - ~^J\!\blobbed    \} \longrightarrow \twob / Ib
 \end{equation}
 determined by $\widetilde{b_w}=b_{\widetilde{w}} $. As before, we can conclude that the set 
 \begin{equation}
 	Z:= \{  b_w-\kappa \widetilde{b_{{w}}}  \, | \,  w\in  ~^I\!\blobbed - ~^J\!\blobbed    \}
 \end{equation}
 is a $K$-basis for $ Jb$ and that the set $Z\cup \{ b_w \, |\, w\in \blobbed  \}$
 is a $K$-basis of $\twob / Ib$. Therefore, the set $ \{ b_w \, |\, w\in \blobbed  \}$ is a $K$-basis of 
 \begin{equation}
 	(\twob / Ib )/ Jb \simeq SB_n(q,Q,\kappa ).
 \end{equation} 
 This finishes the sketch of proof of Theorem \ref{theorem monomial basis for symplectic}.

 \begin{rem}\rm
  The grid presentation used in this work can be extended to express any element in $W^c(\tilde{C}_{n})$. More generally, all fully commutative elements of the three finite and four affine infinite families of Coxeter groups have a grid presentation coming from their normal forms. So we can use the oblique point of view to obtain a normal form of maximal blocks of commuting generators.    
 \end{rem}

\section{Catalan combinatorics}  
\label{section catalan combinatorics}
In this section we introduce and study a variation of Forder's Catalan triangle \cite{forder1961some}, which we call the \emph{blobbed Catalan triangle}. It might be unclear for the reader why we would care about such a triangle.  However, we point out that all the information needed to perform the enumeration of  positive and blobbed fully commutative elements in the forthcoming sections is encoded in this triangle. Let us start by recalling the construction of Forder's Catalan triangle\footnote{It is worth mentioning that in the literature there is a confusion between Forder's triangle and Shapiro's triangle \cite{shapiro1976catalan}. It is common to find authors who work with Forder's triangle but who cite Shapiro's work. But they are different triangles!}.

\begin{defi}\rm \label{defin classical catalan triangle}
The Catalan triangle is the infinite matrix $(c_{i,j})_{-1\leq i,j}$ defined recursively as follows:
\begin{enumerate}
    \item $c_{-1,-1}=1$; 
    \item $c_{-1,j}=0$, for all $j\geq 0$; 
	\item $c_{i,-1}=0$, for all $i\geq 0$;
	\item $c_{i,j}= c_{i-1,j-1}+c_{i-1,j+1}$, for all $i,j\geq 0 $. 
\end{enumerate} 	
\end{defi}

The left-hand side of (\ref{Example classical Catalan}) shows a  Catalan triangle truncated at row $10$ and at column $10$. We have omitted  the $(-1)$-th row and the $(-1)$-th column, as well as all zeroes. The numbers appearing in the Catalan triangle have the following combinatorial interpretation. Consider the  infinite directed graph obtained from the Catalan triangle by  replacing positive numbers by vertices and with arrows as illustrated in the right-hand side of (\ref{Example classical Catalan}). Then, for a given vertex $V$ the number of directed paths from the highest vertex to $V$ is equal to the number replaced by $V$.   In particular, the numbers occurring  in the $0$-th column coincide with the Catalan numbers, since in this case the relevant paths are Dick paths. In formulas, we have $C_n = c_{2n,0}$. 
  
\begin{equation} \label{Example classical Catalan}
	\scalemath{.8}{
\begin{array}{ccccccccccccc}
1 & & & & & & & & & & & & 	  \\
 & 1 & & & & & & & & & & & 	\\
  1&  &1 & & & & & & & & & &  \\
    &  2 & & 1  & & & & & & & & &  \\
       2 &   &3  &   & 1 & & & & & & & &  \\
           &  5 & & 4  & & 1 & & & & & & &  \\
                 5&   &9 &   & 5 & &1 & & & & & &  \\
           &  14 & & 14  & & 6 & & 1 & & & & &  \\
                            14&   &28 &   & 20 & &7 & & 1 & & & &  \\
           &  42 & & 48  & & 27 & & 8 & & 1 & & &  \\
           42&   &90 &   & 75 & &35 & & 9 & & 1& &  \\
\end{array}
}
\Catalan
\end{equation}

\begin{defi} \rm \label{defi Catalan triangle}
The blobbed Catalan triangle is the infinite matrix $(C_{i,j})_{-1\leq i, j}$ defined recursively as follows:
\begin{description}
	\item[\rm (1)]  For  $i=-1$ or $i=0$ and for all $j$ we have
$$  C_{i,j}=\left\{  \begin{array}{ll}
	1, & \mbox{ if } i+j\equiv 0\mod 2;\\
	0, & \mbox{ otherwise.}
\end{array} \right.   $$
	\item[\rm (2)]  $C_{i,-1}=0$, for all $i \geq 1$;
	\item[\rm (3)]  $C_{i,j} = C_{i-1,j-1}+C_{i-1,j+1}$, for all $i\geq 1$ and all $j\geq 0$. 
\end{description}  	
\end{defi}

The blobbed Catalan triangle truncated at column $8$ and at row $8$ is shown in (\ref{Example blobbed Catalan}). We have omitted the entire $(-1)$-th column, as well as all zeroes.  
\begin{equation} \label{Example blobbed Catalan}
\scalemath{1}{
\begin{array}{ccccccccccccccccc}
 & 1 & & 1& & 1 & & 1&   \\
 1 & &1  & &1 & &1 & & 1  \\
 &2  & & 2& & 2 & & 2&   \\
 2 & & 4 & & 4& & 4& & 4 	\\
  &6  & & 8& & 8 & & 8&   \\
   6 & & 14 & & 16& & 16& & 16 	\\
     &20  & & 30& & 32 & & 32&   \\
       20 & & 50 & & 62& & 64& & 64 	\\
     &70  & & 112& & 126 & & 128&   \\
        70 & & 182 & & 238& & 254& & 256 
\end{array}
}
\end{equation}

The blobbed Catalan triangle is not a triangle but a rectangle! The choice of the name triangle instead of rectangle is justified by the fact that our primary (but not exclusive) interest is in the numbers that appear under the main diagonal, since the numbers located above this diagonal are only powers of 2. Like for the numbers occurring in the classical Catalan triangle, the  numbers located under the main diagonal in the blobbed Catalan triangle have a combinatorial interpretation as follows. First, we erase all the zeroes and all the numbers appearing above the main diagonal, as illustrated in the left-hand side of (\ref{Example blobbed Catalan two}). Then, as before, we construct a directed graph by replacing numbers by vertices. But this time, we add two edges between each pair of consecutive vertices  located in the ``hypotenuse'' of the triangle, as is shown in the right-hand side of (\ref{Example blobbed Catalan two}). Then, for a given vertex $V$, the number of directed paths from the highest vertex to $V$ is  the number replaced by $V$. 

\begin{equation} \label{Example blobbed Catalan two}
\scalemath{.7}{
\begin{array}{ccccccccccccccccc}

 1 & &  & & & & & &   \\
 &2  & & & &  & & &   \\
 2 & & 4 & & & & & &  	\\
  &6  & & 8& &  & & &   \\
   6 & & 14 & & 16& & & & 	\\
     &20  & & 30& & 32 & & &   \\
       20 & & 50 & & 62& & 64& &  	\\
     &70  & & 112& & 126 & & 128&   \\
        70 & & 182 & & 238& & 254& & 256 
\end{array}
}
\qquad
\CatalanTwo
\end{equation}

\begin{lem} \label{lemma properties of Cs}
The numbers $C_{i,j}$ satisfy:
\begin{enumerate}
	\item $C_{j,0}=C_{j-1,1} $, for all $j\geq 0$.
	\item $ C_{i,j} = \sum_{k=0}^j C_{i-1-k, j+1-k}  $, for all $i,j\geq 1$.
	\item  $C_{i,j} =\sum_{k=0}^i C_{i-1-k, j-1+k}  $, for all $i,j\geq 1$.
\end{enumerate}
\end{lem}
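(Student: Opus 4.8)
The plan is to establish the three identities in the order (1), (3), (2), using the defining recurrence $C_{i,j}=C_{i-1,j-1}+C_{i-1,j+1}$ repeatedly and reading off the two boundary rows $i\in\{-1,0\}$, which by Definition~\ref{defi Catalan triangle}(1) depend only on the parity of $i+j$. Throughout I will use two elementary consequences of the definition: first, that $C_{i,j}=0$ whenever $i+j$ is odd (both the recurrence and the two seed rows preserve the parity of the index sum, so this follows by induction on $i$); and second, the convention that any summand whose row index is $<-1$ is read as $0$.

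First I would prove (1). For $j=0$ it is the equality $C_{0,0}=C_{-1,1}$, both equal to $1$ by the parity rule. For $j\ge 1$ the defining recurrence gives $C_{j,0}=C_{j-1,-1}+C_{j-1,1}$, and the term $C_{j-1,-1}$ vanishes: it is $0$ by Definition~\ref{defi Catalan triangle}(2) when $j-1\ge1$, and $C_{0,-1}=0$ by the parity rule when $j-1=0$. Hence $C_{j,0}=C_{j-1,1}$, which is (1).

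Next I would prove (3) by telescoping the recurrence down the anti-diagonal that moves one step down and one step to the right. Writing $E_m:=C_{i-m,\,j+m}$, the recurrence applies to $E_m$ for every $0\le m\le i-1$ (its row $i-m\ge1$ and its column $j+m\ge j\ge1$), and it reads $E_m=C_{i-1-m,\,j-1+m}+E_{m+1}$. Summing over $m=0,\dots,i-1$ telescopes to $C_{i,j}=\sum_{m=0}^{i-1}C_{i-1-m,\,j-1+m}+E_i$ with $E_i=C_{0,\,j+i}$. The parity rule gives $C_{0,j+i}=C_{-1,j+i-1}$, which is exactly the missing $k=i$ summand $C_{i-1-i,\,j-1+i}$, and (3) follows. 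The pleasant feature here is that this anti-diagonal never leaves the defined region, so no vanishing convention is needed.

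Finally I would prove (2) by the same device applied to the main diagonal $D_m:=C_{i-m,\,j-m}$, for which the recurrence gives $D_m=C_{i-1-m,\,j+1-m}+D_{m+1}$ whenever $i-m\ge1$ and $j-m\ge0$. Here the telescoping must be halted at the first wall it reaches, and this is the one place where care is required. If $j\le i$ the column reaches $0$ first: summing to $m=j$ leaves $D_j=C_{i-j,0}$, which equals the $k=j$ term $C_{i-1-j,1}$ by part (1). If $j>i$ the row reaches $0$ first: summing to $m=i$ leaves $D_i=C_{0,\,j-i}$, which equals the $k=i$ term $C_{-1,\,j+1-i}$ by the parity rule, while all further claimed terms $C_{i-1-k,\,j+1-k}$ with $k>i$ have row index $<-1$ and vanish. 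In both regimes the sum collapses to $\sum_{k=0}^{j}C_{i-1-k,\,j+1-k}$. The only genuine obstacle is this boundary bookkeeping: ensuring the telescoping is closed by the correct seed (part (1) at the left wall for (2), and the parity of the boundary rows for (3)) and that out-of-range summands are correctly zero. Everything else is a mechanical iteration of the defining recurrence.
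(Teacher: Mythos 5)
Your proof is correct; the paper simply asserts that all three identities ``follow immediately from Definition~\ref{defi Catalan triangle}'', and your telescoping of the defining recurrence along the two diagonals (closed off by the boundary rows and by part (1)) is exactly the computation that remark compresses. The only point worth flagging is that for part (2) with $j>i$ the stated sum contains terms $C_{i-1-k,\,j+1-k}$ whose row index is below $-1$ and hence outside the triangle's domain; your convention of reading these as zero is the right (and necessary) patch, and your verification that the telescoping terminates at the correct seed in each regime is sound.
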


\begin{demo}
	All the statements  follow immediately from Definition \ref{defi Catalan triangle}.
\end{demo}

The following result provides a closed formula for the numbers occurring below the main diagonal of the blobbed Catalan triangle. 

\begin{teo} \label{teo closed formula for Cs}
Let $i$ and $j$ be integers with the same parity such that $0\leq j \leq i$. Then, we have
	\begin{equation} \label{blobbed binomial}
	\displaystyle C_{i,j}= \sum_{k=\frac{1}{2}(i-j)}^{\frac{1}{2}(i+j)}\binom{i}{k}.           
	\end{equation}
\end{teo}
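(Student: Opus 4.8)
The plan is to prove the (a priori stronger) statement that, with the standard convention $\binom{n}{k}=0$ whenever $k<0$ or $k>n$, the quantity
$$ f(i,j):=\sum_{k=\frac12(i-j)}^{\frac12(i+j)}\binom{i}{k} $$
equals $C_{i,j}$ for every $i\ge 0$ and every $j\ge -1$ with $i+j$ even, not merely for $0\le j\le i$. Passing to this extended range costs nothing and is in fact forced on us: clause (3) of Definition \ref{defi Catalan triangle} expresses $C_{i,i}$ in terms of $C_{i-1,i+1}$, which lies strictly above the main diagonal, so any induction on $i$ must account for the entries above the diagonal as well. The convention handles these for free, since for $j\ge i$ the summation range of $f(i,j)$ saturates to $0\le k\le i$ and gives $f(i,j)=2^i$, matching the powers of two noted after \eqref{Example blobbed Catalan}.

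First I would dispose of the base case $i=0$: for even $j\ge 0$ the index $k=0$ always lies in the range $[\tfrac12(-j),\tfrac12 j]$ and is the only one contributing a nonzero binomial, so $f(0,j)=\binom{0}{0}=1=C_{0,j}$, in agreement with clause (1) of Definition \ref{defi Catalan triangle}. The heart of the argument is then the identity $f(i,j)=f(i-1,j-1)+f(i-1,j+1)$, valid for all $i\ge 1$ and all $j$ of the correct parity. I would derive it by a one-line application of Pascal's rule $\binom{i}{k}=\binom{i-1}{k-1}+\binom{i-1}{k}$ (which holds for every integer $k$ under the convention), splitting $f(i,j)$ into two sums, reindexing the first by $k\mapsto k-1$, and comparing, with $a=\tfrac12(i-j)$ and $b=\tfrac12(i+j)$, the pair $\sum_{k=a}^{b}\binom{i-1}{k}+\sum_{k=a-1}^{b-1}\binom{i-1}{k}$ against $f(i-1,j-1)+f(i-1,j+1)=\sum_{k=a}^{b-1}\binom{i-1}{k}+\sum_{k=a-1}^{b}\binom{i-1}{k}$; both reduce to $\sum_{k=a-1}^{b-1}\binom{i-1}{k}+\sum_{k=a}^{b-1}\binom{i-1}{k}+\binom{i-1}{b}$, so they coincide.

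With this identity in hand the induction closes quickly. For $i\ge 1$ and $j\ge 0$, clause (3) gives $C_{i,j}=C_{i-1,j-1}+C_{i-1,j+1}$; applying the inductive hypothesis to the two entries of row $i-1$ (both indices are $\ge -1$ and of the correct parity) and then the binomial identity yields $C_{i,j}=f(i,j)$. The only remaining index is $j=-1$, which occurs precisely when $i$ is odd: here clause (2) gives $C_{i,-1}=0$, while $f(i,-1)$ is an empty sum, its lower limit $\tfrac12(i+1)$ exceeding its upper limit $\tfrac12(i-1)$, so both vanish and agreement persists. Restricting the result to $0\le j\le i$ recovers the theorem.

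I expect the only delicate point to be bookkeeping: ensuring that the Pascal manipulation and the reindexed summation ranges remain valid at the edges (negative lower limits, upper limits exceeding $i-1$, and empty ranges), all of which are absorbed by the convention $\binom{n}{k}=0$ outside $0\le k\le n$. Once one commits to that convention from the outset there are no genuine case distinctions, and no separate appeal to the above-diagonal values $2^i$ is needed beyond what the formula itself supplies.
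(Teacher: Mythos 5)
Your proof is correct, and its engine is the same as the paper's: induction on $i$ driven by Pascal's rule. The difference is in how the induction is set up, and your version is the tidier of the two. The paper restricts the inductive statement to $0\le j\le i$, which forces it to treat $j=i$ as a separate case (since clause (3) of Definition \ref{defi Catalan triangle} there reaches the above-diagonal entry $C_{i-1,i+1}$); it disposes of that case by asserting $C_{i,i}=2^i$, a fact only mentioned informally after \eqref{Example blobbed Catalan} and never actually derived, and it also peels off $j=0$ via Lemma \ref{lemma properties of Cs}(1) and splits on the parity of $i$. By strengthening the claim to all $j\ge -1$ of the right parity, with the convention $\binom{n}{k}=0$ outside $0\le k\le n$, you make $f(i,j)$ satisfy exactly the defining recursion and boundary conditions of $C_{i,j}$ (I checked your identity $f(i,j)=f(i-1,j-1)+f(i-1,j+1)$ and the $j=-1$ and $i=0$ edge cases; they are right), so the above-diagonal values $2^i$ come out of the formula rather than being fed into the argument, and all case distinctions disappear. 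What you lose is nothing; what you gain is a uniform induction that also closes the small gap in the paper's handling of $j=i$.
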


\begin{demo}
We proceed by induction on $i$. If $i=0$ or $i=1$ then the  result is clear.  We suppose that  $i>1 $ and that (\ref{blobbed binomial}) holds for $i-1$. The proof splits naturally into two cases in accordance with the parity of $i$. We only treat the case $i$ even since the case $i$ odd is handled similarly. For the rest of the proof we assume $i$ is even. Let $j$ be an even integer with $0\leq j\leq i $. For $j=0$ our induction hypothesis yields
\begin{equation}
	\displaystyle C_{i,0}= C_{i-1,1}=\sum_{k=\frac{i}{2}-1}^{\frac{i}{2}} \binom{i-1}{k} = \binom{i-1}{\frac{i}{2}-1} + \binom{i-1}{\frac{i}{2}}= \binom{i}{\frac{i}{2}}, 
\end{equation}
as we wanted to show. For $j=i$ we have $C_{i,i}=2^i$ and (\ref{blobbed binomial}) holds in this case as well.\\We now assume $0<j<i$. By Definition \ref{defi Catalan triangle} we have 
$C_{i,j} = C_{i-1,j-1}+C_{i-1,j+1}$, then  our  induction hypothesis shows that
\begin{equation} \label{sums}
C_{i,j}=\displaystyle\sum_{k=\frac{i-j}{2}}^{\frac{i+j}{2}-1}\binom{i-1}{k}+\sum_{k=\frac{i-j}{2}-1}^{\frac{i+j}{2}}\binom{i-1}{k}.
 \end{equation}
By collecting in consecutive pairs the terms appearing in the sums in (\ref{sums}) and by applying the well-known Pascal's identities we can rewrite these sums as

\begin{equation}\label{sumsA}
	\displaystyle\sum_{k=\frac{i-j}{2}}^{\frac{i+j}{2}-1}\binom{i-1}{k} = \sum_{k=1}^{\frac{j}{2}} \binom{i}{\frac{i-j}{2}+2k-1} \quad   \mbox{ and } \quad  \sum_{k=\frac{i-j}{2}-1}^{\frac{i+j}{2}}\binom{i-1}{k}=  \sum_{k=0}^{\frac{j}{2}} \binom{i}{\frac{i-j}{2}+2k}.
\end{equation}
Finally, by combining (\ref{sums}) and (\ref{sumsA})  we obtain
\begin{equation}
	C_{i,j} = \sum_{k=0}^j \binom{i}{\frac{i-j}{2}+k} = \sum_{k=\frac{1}{2}(i-j)}^{\frac{1}{2}(i+j)}\binom{i}{k}. 
\end{equation}
\end{demo}

\begin{coro} \label{lemma bin to catalan}
For all $i\geq 1$ we have $C_{2i,0} = \binom{2i}{i}$.
\end{coro}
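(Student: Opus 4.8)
The plan is to deduce this immediately from Theorem \ref{teo closed formula for Cs} by specializing the indices. The closed formula applies to any pair $(i,j)$ of integers of the same parity with $0\leq j\leq i$. I would apply it to the pair $(2i,0)$: here both entries are even (hence of the same parity) and plainly $0\leq 0\leq 2i$, so the hypotheses of Theorem \ref{teo closed formula for Cs} are met.

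First I would substitute into the formula \eqref{blobbed binomial}, computing the two summation bounds. The lower bound is $\tfrac{1}{2}(2i-0)=i$ and the upper bound is $\tfrac{1}{2}(2i+0)=i$. Thus the range of summation collapses to the single index $k=i$, and the sum reduces to the lone term $\binom{2i}{i}$. Concretely,
\begin{equation*}
	C_{2i,0}=\sum_{k=\frac{1}{2}(2i-0)}^{\frac{1}{2}(2i+0)}\binom{2i}{k}=\sum_{k=i}^{i}\binom{2i}{k}=\binom{2i}{i},
\end{equation*}
which is exactly the claimed identity.

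There is no real obstacle here: the entire content of the corollary is the observation that the $0$-th column sits on the symmetry axis of the binomial coefficients, so the interval of summation in \eqref{blobbed binomial} degenerates to a point. The only thing to check is the parity/range condition needed to invoke Theorem \ref{teo closed formula for Cs}, and this is automatic for $(2i,0)$ with $i\geq 1$. I would therefore present the argument in a single line as above, with a brief remark that this recovers the fact that the $0$-th column of the blobbed Catalan triangle consists of the central binomial coefficients, paralleling the appearance of the Catalan numbers $C_n=c_{2n,0}$ in the classical triangle.
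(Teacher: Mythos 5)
Your argument is correct and is exactly the paper's proof: the corollary is obtained by setting $j=0$ in Theorem \ref{teo closed formula for Cs}, where the summation range collapses to the single term $\binom{2i}{i}$. You have merely written out explicitly the one-line specialization that the paper leaves implicit.
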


\begin{demo}
	This is just the  case $j=0$ of Theorem \ref{teo closed formula for Cs}.
\end{demo}

We conclude this section by showing how the results obtained in this section allow to recover the main result in \cite{lee2016catalan}. Concretely, we prove that any binomial coefficient can be written as a $2$-power weighted sum of numbers occurring in the Catalan triangle. This result is not needed in the sequel. We begin by considering central binomial coefficients.

\begin{lem} \label{lem binom as catalan times two powers center case}
	For all $i\geq 1$ we have 
	\begin{equation}
	  \binom{2i}{i}= \sum_{k=1}^i 2^k c_{2i-k-1,k-1}.
	  \end{equation}
\end{lem}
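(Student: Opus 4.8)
The plan is to prove the equivalent statement $C_{2i,0}=\sum_{k=1}^{i}2^{k}\,c_{2i-k-1,\,k-1}$ and then invoke Corollary~\ref{lemma bin to catalan}, which identifies $C_{2i,0}$ with $\binom{2i}{i}$. Both sides will be read as counts of directed paths in the two triangle graphs described in Section~\ref{section catalan combinatorics}. Recall that $c_{a,b}$ counts the directed paths from the apex of the classical graph to its vertex in row $a$ and column $b$; equivalently, the nonnegative integer lattice paths of length $a$ that start at height $0$ and end at height $b$. Likewise $C_{2i,0}$ counts the directed paths from the apex to row $2i$, column $0$ in the blobbed graph, with the crucial proviso that each of the two parallel edges joining consecutive hypotenuse vertices may be chosen independently.

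The first key step is the structural observation that the hypotenuse vertices $C_{m,m}$ are reachable only from $C_{m-1,m-1}$: in the region $0\le \text{column}\le \text{row}$ a vertex whose column equals its row can be entered only by a down-right step from the preceding hypotenuse vertex. Consequently, \emph{once a directed path leaves the hypotenuse it can never return to it}. Since the target $C_{2i,0}$ lies off the hypotenuse (for $i\ge 1$) while the apex $C_{0,0}$ lies on it, every path begins with a maximal initial run of down-right steps along the hypotenuse, of some length $k\ge 1$ (the run has positive length because from column $0$ the only admissible first step raises the column), reaching $C_{k,k}$, and then takes a single down-left step to $C_{k+1,k-1}$, after which it stays strictly below the hypotenuse.

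I would then count paths by this parameter $k$. The $k$ doubled hypotenuse edges traversed in the initial run contribute the weight $2^{k}$, and no further doubled edge is ever used. The forced down-left step fixes the path up to row $k+1$ and column $k-1$; the remaining portion is an arbitrary directed path of the classical (undoubled) type from $C_{k+1,k-1}$ to $C_{2i,0}$, that is, a nonnegative lattice path of length $2i-k-1$ from height $k-1$ to height $0$. By the reversal symmetry of nonnegative lattice paths (reversing a path exchanges its endpoints while preserving both the nonnegativity constraint and the length), the number of these equals the number of nonnegative paths of length $2i-k-1$ from height $0$ to height $k-1$, which is exactly $c_{2i-k-1,\,k-1}$. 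Summing over $1\le k\le i$ --- the terms with $k>i$ vanish automatically, the required height $k-1$ exceeding the available length $2i-k-1$ --- yields $C_{2i,0}=\sum_{k=1}^{i}2^{k}c_{2i-k-1,k-1}$, and the lemma follows.

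The main point to handle carefully is the weight bookkeeping: one must argue that the doubled edges are traversed exactly $k$ times, and only during the initial run, which is precisely what the no-return observation guarantees, together with the routine verification of the range of $k$ and of the degenerate cases $k=1$ and $k=i$. As an independent check, and an alternative purely algebraic route, one may substitute the closed form of Theorem~\ref{teo closed formula for Cs} together with the standard ballot formula $c_{a,b}=\binom{a}{(a-b)/2}-\binom{a}{(a-b)/2-1}$; this reduces the statement to the binomial identity
\[
\binom{2i}{i}=\sum_{k=1}^{i}2^{k}\left(\binom{2i-k-1}{i-k}-\binom{2i-k-1}{i-k-1}\right),
\]
which can be established by induction on $i$ using Pascal's rule. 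I expect the combinatorial argument to be cleaner and, moreover, to explain conceptually where the powers of $2$ originate, namely from the doubled hypotenuse edges of the blobbed Catalan triangle.
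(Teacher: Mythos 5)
Your proof is correct and follows essentially the same route as the paper's: classify paths to $(2i,0)$ in the blobbed triangle by the length $k$ of the hypotenuse run, extract the factor $2^k$ from the doubled edges, note the forced step to $(k+1,k-1)$, and count the remaining segment as $c_{2i-k-1,k-1}$ via path reversal before invoking Corollary~\ref{lemma bin to catalan}. Your explicit justification that a path can never return to the hypotenuse is a welcome addition the paper leaves implicit, but the argument is the same.
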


\begin{demo}
We recall that $C_{2i,0}$ counts the number of paths  from the vertex $(0,0)$ to the vertex $(2i,0)$ in the blobbed Catalan triangle. These paths can be classified into $i$ disjoint sets according to the number of steps on the hypothenuse. For $1\leq k \leq i$, let $P_k$ be the set of paths with $k$ steps on the hypothenuse.   Let $p $ be a path in $P_k$. The steps on the hypotenuse can be performed in $2^k$ different ways. Once $p$ completed the $k$ steps on the hypothenuse, it is forced to make one step in southwest direction till it reaches the vertex $(k+1,k-1)$. Finally, the number of ways in which $p$ can be completed to reach the vertex $(2i,0)$ is given by
    $ c_{2i-k-1,k-1} $. To see this just reverse the arrows in the classical Catalan triangle and count the number of paths from $(2i,0)$ to  $(k+1,k-1)$. Summing up, we have
    \begin{equation}
    	C_{2i,0}= \sum_{k=1}^i 2^k c_{2i-k-1,k-1} .
    \end{equation}
    The result is now a consequence of Corollary \ref{lemma bin to catalan}. 
    \end{demo}

\begin{teo}
	Let $i$ and $j$ be integers such that $1\leq j \leq i $. Then, 
	\begin{equation}
		\displaystyle  \binom{2i-j}{i} = \sum_{k=1}^{i-j+1} 2^{k-1}c_{2i-k-j,j+k-2}.
	\end{equation}
	\end{teo}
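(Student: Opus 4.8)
The plan is to prove the slightly stronger statement that the claimed identity holds for all $0\le j\le i$, where the case $j=0$ is precisely Lemma~\ref{lem binom as catalan times two powers center case}: after reindexing, the $k=1$ summand vanishes because $c_{2i-1,-1}=0$, and shifting $k\mapsto k+1$ turns the remaining sum into $\sum_{k=1}^{i}2^{k}c_{2i-1-k,k-1}=\binom{2i}{i}$. Writing
$$S(i,j)=\sum_{k=1}^{i-j+1}2^{k-1}c_{2i-k-j,\,j+k-2},$$
the goal becomes $S(i,j)=\binom{2i-j}{i}$, and the whole argument will rest on a single recursion satisfied by both sides.

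The central step I would establish is the recursion
$$S(i,j)=S(i,j+1)+S(i-1,j-1),\qquad 1\le j\le i-1.$$
To prove it I would apply the defining relation $c_{a,b}=c_{a-1,b-1}+c_{a-1,b+1}$ of the classical Catalan triangle (Definition~\ref{defin classical catalan triangle}) to every summand $c_{2i-k-j,\,j+k-2}$ of $S(i,j)$; this is legitimate because in the range considered the indices satisfy $2i-k-j\ge i-1\ge 1$ and $j+k-2\ge 0$. Splitting the resulting expression into its two pieces produces, on one hand, exactly $S(i-1,j-1)$, and on the other hand $S(i,j+1)$ together with a single extra summand at $k=i-j+1$ equal to $2^{i-j}c_{i-2,i}$. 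This last term vanishes since $c_{m,n}=0$ whenever $n>m$, which closes the computation.

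With the recursion in hand I would run an induction on $i$. The base $i=1$ is immediate, and for each fixed $i$ I would argue by downward induction on $j$. The top value $j=i$ is a direct base case, since the single remaining summand gives $S(i,i)=c_{i-1,i-1}=1=\binom{i}{i}$; the value $j=0$ is supplied by Lemma~\ref{lem binom as catalan times two powers center case}. For $1\le j\le i-1$ I would invoke the recursion: $S(i,j+1)$ is known by the inner (downward) hypothesis and $S(i-1,j-1)$ by the outer hypothesis (valid since $0\le j-1\le i-1$), whence
$$S(i,j)=\binom{2i-j-1}{i}+\binom{2i-j-1}{i-1}=\binom{2i-j}{i}$$
by Pascal's rule, after noting $\binom{2i-j-1}{i}=\binom{2i-(j+1)}{i}$ and $\binom{2i-j-1}{i-1}=\binom{2(i-1)-(j-1)}{i-1}$.

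The only delicate point — and the one I would treat most carefully — is the bookkeeping of the boundary terms: verifying that the Catalan recursion really does apply to every summand in the stated index range, that the spurious top term $2^{i-j}c_{i-2,i}$ is genuinely zero, and that the degenerate summands with second index $-1$ vanish by the convention $c_{a,-1}=0$. Everything else is a routine two-layer induction, and no new idea beyond Lemma~\ref{lem binom as catalan times two powers center case} and Pascal's identity is needed.
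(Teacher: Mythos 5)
Your proof is correct, and it rests on the same two ingredients as the paper's argument --- Pascal's rule and a termwise application of the Catalan-triangle recursion $c_{a,b}=c_{a-1,b-1}+c_{a-1,b+1}$ --- but the scaffolding is organized differently. The paper inducts upward on $j$: it writes $\binom{2i-j}{i}=\binom{2i-(j-1)}{i}-\binom{2(i-1)-(j-2)}{i-1}$, substitutes the inductive hypothesis into both binomials, and collapses the resulting difference of sums via the Catalan recursion, leaving the base cases $j=1,2$ to the reader. You instead establish the recursion $S(i,j)=S(i,j+1)+S(i-1,j-1)$ directly on the Catalan-sum side --- which is the same Pascal identity read in the opposite direction --- and run a downward induction on $j$ nested inside an induction on $i$, anchored at $j=i$ (where $S(i,i)=c_{i-1,i-1}=1$) and at $j=0$ (which you correctly identify with Lemma \ref{lem binom as catalan times two powers center case} after discarding the $c_{2i-1,-1}$ term and shifting the index). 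What your arrangement buys is that the base cases become genuinely trivial and the boundary bookkeeping --- the vanishing of the spurious top term $2^{i-j}c_{i-2,i}$ and of the summands with second index $-1$ --- is isolated and verified explicitly, so nothing is left to the reader; what the paper's arrangement buys is a shorter single-variable induction once the two small cases are granted. Your index-range checks ($2i-k-j\ge i-1$ and $j+k-2\ge 0$ throughout the sum) are exactly the delicate points, and they hold.
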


\begin{demo}
	We proceed by induction on $j$. The cases $j=1$ and $j=2$ are left to the reader. So we assume that 
	 $j>2$ and that the result holds for integers smaller than $j$. We have

\begin{equation}
	\begin{array}{rl}
	\displaystyle	\binom{2i-j}{i} & = \displaystyle \binom{2i-(j-1)}{i} -\binom{2(i-1)-(j-2)}{i-1}   \\
	 &          \\
	 &\displaystyle  = \sum_{k=1}^{i-j+2}  2^{k-1}  c_{2i-k-j+1,j+k-3} - \sum_{k=1}^{i-j+2}  2^{k-1}  c_{2i-k-j,j+k-4} \\
	 &          \\
	 &\displaystyle  = \sum_{k=1}^{i-j+2}  2^{k-1} \left(  c_{2i-k-j+1,j+k-3} -  c_{2i-k-j,j+k-4}\right) \\
 &          \\
	 &\displaystyle  = \sum_{k=1}^{i-j+1}  2^{k-1}   c_{2i-k-j,j+k-2}.  \\

	 \end{array}
\end{equation}
\end{demo}

%


\section{Enumeration of positive fully commutative elements.}
\label{section enumeration positive}
Let $n$ and $s$ be integers such that $n\geq 1$ and $s\geq 0$. We define $A_n^s =\{ w\in \pos \, |\, L(w)=s \}$ 
and set $a_{n}^s=|A_n^s|$.  The purpose of this section is to  determine the value of the numbers $\{a_{n}^s \}$. In other words, we enumerate the elements of $\pos $ according to their affine length. The main (and unique) result in this section is the following.

\begin{teo}  \label{teo enumeration normal words}
	Let $n$ and $s$ be integers such that $n\geq 1$ and $s\geq 0$. Then,  $a_{n}^s=C_{2n,2s}$.
\end{teo}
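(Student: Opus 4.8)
The plan is to establish a bijection between $A_n^s$ and the set of directed paths in the blobbed Catalan triangle that terminate at the vertex carrying the value $C_{2n,2s}$, thereby reducing the enumeration to the combinatorial interpretation of the triangle described in Section \ref{section catalan combinatorics}. The key observation is that Proposition \ref{proposition normal form posit} encodes a positive fully commutative element $w$ as a sequence of rigid blocks $\langle l_1,r_1\rangle \langle l_2,r_2\rangle \ldots \langle l_k,r_k\rangle$ subject to the monotonicity and gluing conditions (1)--(5). The affine length $L(w)=s$ counts the occurrences of $\sigma_n=t_n$, that is, the number of indices $i$ with $r_i=n$. First I would set up a coordinate bookkeeping for the pair of sequences $(l_i)$ and $(r_i)$ and show that the constraints translate precisely into a lattice-path description.

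The core of the argument is to read the grid $G(w)$ by following how the two boundaries $(l_i)_i$ and $(r_i)_i$ evolve. Because $n\ge l_1\ge \cdots \ge l_k\ge 0$ and $n\ge r_1\ge\cdots\ge r_k\ge 0$, with the equality rules forcing $l_i=0$ when $l_{i+1}=l_i$ and $r_i=n$ when $r_{i+1}=r_i$, each step from block $i$ to block $i+1$ either strictly decreases a boundary coordinate or stays constant only at an extreme value ($0$ for the left boundary, $n$ for the right boundary). The plan is to encode the state of $w$ after reading $i$ blocks by a single integer (morally $r_i+l_i$ or $r_i-l_i$, to be pinned down), so that the admissible transitions are exactly the moves $(i,j)\mapsto (i-1,j\pm 1)$ governing the recursion $C_{i,j}=C_{i-1,j-1}+C_{i-1,j+1}$ in Definition \ref{defi Catalan triangle}, while the "doubled" moves on the hypotenuse correspond precisely to the situation $r_i=n$ that contributes to affine length and to the two edges added along the hypotenuse in \eqref{Example blobbed Catalan two}. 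The parameter $s$ then matches the second index: staying at the extremal boundary value $n$ is what produces the factor recorded by the column $2s$, so that a positive element of affine length $s$ corresponds to a path reaching $C_{2n,2s}$.

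Concretely I would proceed by induction on $n$ using the recursion of Lemma \ref{lemma properties of Cs}, or equivalently induction on the number of blocks, peeling off the leftmost (or rightmost) rigid block and invoking Proposition \ref{obliques} to identify the reduced structure that remains; the inductive step must verify that adding a block changes the coordinates exactly as a single edge of the directed graph does, and that the doubling on the hypotenuse accounts for the two distinct ways a block can sit at the maximal level while keeping $w$ positive and fully commutative. An alternative and perhaps cleaner route is to bypass paths entirely: show directly that $a_n^s$ satisfies the defining recurrence $a_n^s=a_{n-1}^{s-1}+a_{n-1}^{s+1}$ of the blobbed Catalan triangle (with the appropriate parity-based boundary values coming from the $i=-1,0$ rows and the hypotenuse values $C_{i,i}=2^i$), by exhibiting an explicit length-preserving and affine-length-controlled correspondence between positive elements of $W^{+c+}(\tilde C_n)$ and those of $W^{+c+}(\tilde C_{n-1})$. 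One would then check the base case $n=1$ by hand and confirm the boundary behaviour (for instance that $a_n^s=2^{2s}/\ldots$, equivalently $C_{2n,2s}$, stabilises to powers of $2$ above the diagonal, matching the unrestricted choices when $s$ is large).

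The hard part will be pinning down the exact coordinate change induced by appending a rigid block and, crucially, proving that the hypotenuse doubling is the correct combinatorial shadow of positivity: one must argue that when a boundary sits at the extreme value $n$ there are genuinely two admissible block-configurations (rather than one) consistent with conditions (4)--(5) and with the avoidance of $\sigma_{n-1}\sigma_n\sigma_{n-1}$ that defines $W^{+c+}$, and that these two configurations are precisely the two parallel edges introduced in \eqref{Example blobbed Catalan two}. Handling the parity constraints on $i$ and $j$ (Theorem \ref{teo closed formula for Cs} requires $i\equiv j \bmod 2$, and here $i=2n$, $j=2s$ are both even so the formula applies) and the degenerate small-$n$ and $s=0$ cases cleanly, so that the bijection is exhaustive and injective on the nose, will require the most care. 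Once that correspondence is established, Theorem \ref{teo closed formula for Cs} and the path interpretation give $a_n^s=C_{2n,2s}$ immediately.
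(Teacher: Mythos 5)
Your overall instinct---peel a row off the grid and match the resulting recurrence against the recursion defining the blobbed Catalan triangle---is the same as the paper's, but the recurrence you propose to verify is false. Since the claim is $a_n^s=C_{2n,2s}$, passing from rank $n$ to rank $n-1$ is a \emph{two}-row step in the triangle, and iterating $C_{i,j}=C_{i-1,j-1}+C_{i-1,j+1}$ twice gives $C_{2n,2s}=C_{2n-2,2s-2}+2\,C_{2n-2,2s}+C_{2n-2,2s+2}$. The identity $a_n^s=a_{n-1}^{s-1}+a_{n-1}^{s+1}$ that you write down omits the middle term and already fails numerically: for $n=2$, $s=1$ it would give $C_{2,0}+C_{2,4}=2+4=6$, whereas $C_{4,2}=14$. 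The missing summand $2a_{n-1}^{s}$ is exactly the content of the paper's decomposition of $A_{n+1}^{s-1}$ into the four types of \eqref{equation four cases}: deleting the $(n+1)$-th row sends Type I into $A_n^{s-2}$, Types II and III (two genuinely distinct configurations) each into $A_n^{s-1}$, and Type IV into $A_n^{s}$, yielding $a_{n+1}^{s-1}=a_n^{s-2}+2a_n^{s-1}+a_n^s$. Your attempt to locate a factor of two in the doubled hypotenuse edges of \eqref{Example blobbed Catalan two} is a red herring here: the coefficient $2$ occurs in every column, not only at the extremal one, and the paper's proof of Theorem \ref{teo enumeration normal words} never invokes the path interpretation at all---only the recursion (3) of Definition \ref{defi Catalan triangle} and Lemma \ref{lemma properties of Cs}.

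There are two further gaps. First, the direct bijection you sketch---encoding the state after $i$ rigid blocks by a single integer so that appending a block is a unit step $(i,j)\mapsto(i-1,j\pm1)$---cannot work as stated, because consecutive blocks may differ by more than one in either coordinate (e.g.\ $\langle n,n\rangle\langle 0,1\rangle$ satisfies all the conditions of Proposition \ref{proposition normal form posit}), so a block transition is not a single edge of the graph. Second, any induction of this shape needs a base case identifying $a_n^0$ with $C_{2n,0}=\binom{2n}{n}$; this is not automatic and is supplied in the paper by Lemma \ref{lemma counting psotitive fully commu in type B} (the count of positive elements of $W^c(B_n)$), together with a separate treatment of $s=1$ via $a_{n+1}^0=a_n^1+a_n^0$, neither of which your proposal addresses.
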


\begin{demo}
	We proceed by induction on $s$. We begin by treating the case $s=0$. We recall that $a_n^0$ denotes the cardinality of the set of positive fully commutative elements of affine length zero in $W^c(\atc)$. The above set coincides with the set of positive fully commutative elements in $W^c(B_n)$. Thus, the result in this case follows by  combining   Lemma \ref{lemma counting psotitive fully commu in type B} and Corollary \ref{lemma bin to catalan}. \\
	We now treat the case $s=1$. We notice that the set $A_{n+1}^0$ splits into two disjoint subsets. Namely, the subset formed by the elements that contain $\sigma_n$ and the subset formed by the elements  that do not contain $\sigma_n$. The set of elements in $A_{n+1}^0$ that contain $\sigma_n$ can be identified with $ A_n^1 $. On the other hand, the set of elements in $A_{n+1}^0$ that do not contain $\sigma_n$ can be identified with $A_{n}^0$. Then, we have $
		a_{n+1}^0 = a_{n}^1 +a_{n}^0$. By combining Lemma \ref{lemma properties of Cs} and the result for $s=0$ we conclude
	\begin{equation}
		a_n^1 = a_{n+1}^0 - a_{n}^0 = C_{2n+2,0}-C_{2n,0}= C_{2n+1,1}-C_{2n,0}= C_{2n,2},
	\end{equation}
which gives us the result for $s=1$.\\
We now assume that $s\geq 2$ and that the result holds for integers less than $s$. The set $ A_{n+1}^{s-1}$ decomposes into four disjoint subsets according to the behavior of the grids of its elements in the  $n$-th row. We illustrate such a decomposition in (\ref{equation four cases}) for $s=6$.
\begin{equation} \label{equation four cases}
	\scalebox{1.4}{\fourcases } 
\end{equation}
By disregarding the $(n+1)$-th row, we can identify Type I elements with elements of  $A_{n}^{s-2}$, Type II and Type III elements with elements of $A_{n}^{s-1}$ and Type IV elements with elements of $A_{n}^{s}$. Therefore, we have
 \begin{equation} \label{equation decomposing four cases}
 	a_{n+1}^{s-1}=a_{n}^{s-2} +a_n^{s-1}+a_n^{s-1}+a_n^{s}.   
 \end{equation}
 Then,  property (3) in Definition \ref{defi Catalan triangle} and  our induction hypothesis yield
  \begin{equation}
 \begin{aligned}
 	a_n^s & =  C_{2(n+1),2(s-1)} -  C_{2n,2(s-2)}- 2C_{2n,2(s-1)} \\
 	      & =  C_{2n+1,2s-3}+C_{2n+1,2s-1} -  C_{2n,2(s-2)}- 2C_{2n,2(s-1)} \\
 	      & =  C_{2n,2(s-2)}+C_{2n,2(s-1)} + C_{2n,2(s-1)}+C_{2n,2s}  -  C_{2n,2(s-2)}- 2C_{2n,2(s-1)} \\
 	      & =  C_{2n,2s}. 
 \end{aligned}
 \end{equation} 
\end{demo}


\section{Enumeration of blobbed fully commutative elements.}
\label{section enumeration blobbed}
The goal of this section is to count the elements of $\blobbed$. We set $w_n^I=IJI$ and $w_n^J=JIJ$. We recall that the elements of $\blobbed$ are the positive fully commutative elements that avoid $w_n^I $ and $ w_n^J$. We notice that for $n\geq 2$ we have 
\begin{equation}\label{definition wI}
	w_n^I = \left\{   \begin{array}{ll}
	  \langle n-1,n \rangle\langle  n-3,n-1  \rangle\langle n-5,n-3 \rangle \cdots \langle 1,3 \rangle\langle 0,1\rangle, 	& \mbox{if } n \mbox{ is even;}  \\
	
		\langle n,n \rangle\langle n-2,n \rangle\langle n-4,n-2\rangle \cdots \langle 1,3 \rangle\langle 0,1\rangle , 	& \mbox{if } n \mbox{ is odd, }
	\end{array}  \right.
\end{equation}
and 
\begin{equation}\label{definition wJ}
	w_n^J = \left\{   \begin{array}{ll}
	  \langle n,n \rangle\langle  n-2,n \rangle\langle n-4,n-2\rangle \cdots \langle 0,2 \rangle\langle 0,0\rangle , 	& \mbox{if } n \mbox{ is even;}  \\
		\langle n-1,n  \rangle\langle n-3,n-1 \rangle\langle n-5,n-3\rangle \cdots \langle 0,2\rangle\langle 0,0\rangle , 	& \mbox{if } n \mbox{ is odd.}
	\end{array}  \right.
\end{equation} 
If  $n=1$ we have $w_1^I=\langle 1,1 \rangle\langle 0,1 \rangle$ and  $ w_1^J= \langle 0,1 \rangle\langle 0,0\rangle$. As in the previous section, we enumerate the elements of $\blobbed $ according with their affine length. Given integers $n\geq 1$ and  $s\geq 0 $ we define
\begin{equation}
	B_n^s =\{ w\in \blobbed \, |\, L(w) =s  \} 
\end{equation}
and set $b_n^s = |B_n^s |$. We also define $D_n^s= A_n^s - B_n^s$
and set $d_{n}^s =|D_n^s|= a_n^s -b_n^s$. We compute the numbers $\{d_n^s \}$.  Since the numbers $\{a_n^s \}$ are known by Theorem \ref{teo enumeration normal words}, the knowledge of  $ d_n^s$ is equivalent to the knowledge of $b_n^s$. We stress that the elements of $D_n^s$ are  the positive FC elements in $W^c(\atc)$ of affine length $s$ that contain $w_n^I$ or $w_n^J$. By Proposition \ref{lemma blobbed if part} we can see that an element $w\in D_n^s$ if and only if $G(w)$ has $s$ black dots in the $n$-th row and  contains $G(w_n^I)$ or $G(w_n^J)$. Sometimes we abuse notation and think of $D_n^s$ as the set formed by the grids of its elements. We begin our counting by treating the cases  $s=0$ and $s>n$. 

\begin{teo} \label{lemma extremes cases blobbed fully commutative elements}
	Let $n$ be a positive integer. Then,  $d_{n}^0= 0$ and $ d_n^s = a_n^s $, if $s>n$. Consequently, $b_n^0=a_n^0$ and $b_n^s=0$ if $ s>n$.
\end{teo}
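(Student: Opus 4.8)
The plan is to work entirely with the grids of Definition \ref{defi grid}, translating both assertions into statements about the dots of $G(w)$ lying in the top row $n$. This is natural because, by the definition of affine length, $L(w)=s$ equals the number of rigid blocks of $w$ reaching row $n$, i.e. the number of dots of $G(w)$ in that row.

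The case $s=0$ is immediate. Inspecting \eqref{definition wI} and \eqref{definition wJ} (together with the case $n=1$), every normal form of $w_n^I=IJI$ and of $w_n^J=JIJ$ contains a rigid block of the shape $\langle\,\cdot\,,n\rangle$, hence contains the generator $t_n=\sigma_n$. Consequently any $w$ containing $w_n^I$ or $w_n^J$ satisfies $L(w)\ge 1$, so no element of affine length $0$ can lie in $D_n^0$. This yields $d_n^0=0$, and since $D_n^s\subseteq A_n^s$ with $B_n^s=A_n^s\setminus D_n^s$, we get $b_n^0=a_n^0-d_n^0=a_n^0$.

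For $s>n$ I would prove the contrapositive: if $w\in\pos$ is blobbed then $L(w)\le n$; equivalently $D_n^s=A_n^s$ for $s>n$, whence $d_n^s=a_n^s$ and $b_n^s=a_n^s-d_n^s=0$. First I record an oblique reformulation of blobbedness. By Proposition \ref{lemma blobbed if part}, $w$ contains $IJI$ (resp. $JIJ$) if and only if $G(w)$ contains $G(IJI)$ (resp. $G(JIJ)$); since $I$ and $J$ are full diagonals, an oblique of $G(w)$ can contain $G(I)$ (resp. $G(J)$) only by being equal to it, and by Lemma \ref{lemma two I forces all inthe middle} two obliques equal to $I$ (which necessarily occur at even distance) force a $J$ strictly between them, producing $IJI$. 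Hence $w$ is blobbed if and only if at most one oblique of $G(w)$ equals $I$ and at most one equals $J$. Next I read off the top row from the normal form of Proposition \ref{proposition normal form posit}: the blocks reaching row $n$ are exactly those with $r_i=n$, and by conditions (2) and (5) they are the first $s$ blocks $\langle l_1,n\rangle,\dots,\langle l_s,n\rangle$, while conditions (1) and (4) give $l_1\ge\cdots\ge l_s\ge 0$ with repeated values only at $0$. Thus the positive values among $l_1,\dots,l_s$ are pairwise distinct and lie in $\{1,\dots,n\}$, so once $s>n$ at least $s-n$ of these blocks have $l_i=0$, i.e. descend to the bottom row $0$.

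The remaining and principal step is to show that this surplus of full-height top blocks, together with the weak monotonicity of both $l_i$ and $r_i$, forces two disjoint full diagonals of the same type ($I$ or $J$) inside $G(w)$, contradicting the characterization above and hence pinning the maximal blobbed affine length at $n$. The hard part will be exactly this geometric pigeonhole: a full $I$- or $J$-oblique is not created by a single extremal block but requires an entire staircase of consecutive points $(a+t,n-2t)$ to lie in the grid, so I must verify that the inequalities $l_{a+t}\le n-2t\le r_{a+t}$ are guaranteed by the monotonicity of the endpoints as soon as enough blocks reach row $0$. The extremal blobbed configuration $\langle n,n\rangle\langle n-1,n\rangle\cdots\langle 1,n\rangle$, which has affine length exactly $n$ and contains a single full diagonal, shows that $s=n$ is the true threshold and should guide the bookkeeping for the general inductive or pigeonhole count.
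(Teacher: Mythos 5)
Your treatment of $s=0$ is complete and coincides with the paper's: both $w_n^I$ and $w_n^J$ contain $t_n=\sigma_n$, so no element of affine length $0$ can contain either, giving $d_n^0=0$ and $b_n^0=a_n^0$. The problem is the half $s>n$, where your proposal stops exactly at the decisive point. You reformulate blobbedness via obliques (correctly, using Proposition \ref{lemma blobbed if part} and Lemma \ref{lemma two I forces all inthe middle}), you observe that at least $s-n$ of the top blocks have $l_i=0$, and then you announce that ``the remaining and principal step'' is a geometric pigeonhole producing two disjoint full diagonals of the same type --- but you never carry it out. As written, the statement $d_n^s=a_n^s$ for $s>n$ is asserted, not proved. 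Moreover the fact you isolate (at least $s-n$ blocks reaching row $0$) is not the operative one: what you actually need is a bound on \emph{every} one of the first $n+1$ left endpoints, not just a count of how many vanish.

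The missing step is short, and it is what the paper does directly, with no contrapositive and no need to locate two diagonals. If $s\ge n+1$, then the first $n+1$ blocks of the normal form of Proposition \ref{proposition normal form posit} all have $r_i=n$, and since $n\ge l_1$ and the $l_i$ decrease strictly until they reach $0$, one gets $l_i\le n+1-i$ for $1\le i\le n+1$. Hence column $i$ of $G(w)$ contains all rows from $n+1-i$ up to $n$, i.e.\ $G(w)$ contains the full staircase $G(\langle n,n\rangle\langle n-1,n\rangle\cdots\langle 0,n\rangle)$. That staircase visibly contains $G(w_n^I)$ (one checks the column-by-column inclusion from \eqref{definition wI}, as in the paper's figure for $(n,s)=(6,7)$ and $(7,8)$), so by Proposition \ref{lemma blobbed if part} $w$ contains $w_n^I$ and $w\in D_n^s$. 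Your two-diagonal route would ultimately prove the same containment --- two $I$-obliques at even distance force the intermediate $J$'s and hence $G(w_n^I)$ --- but it is a detour, and until the inequalities $l_{a+t}\le n-2t\le r_{a+t}$ are actually verified your argument has a genuine gap. (Your closing remark about the configuration $\langle n,n\rangle\cdots\langle 1,n\rangle$ witnessing $b_n^n>0$ is correct but not needed for the statement.)
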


\begin{demo}
	Let $w\in A_n^s$. If $s=0$ then $w$ does not contain the generator $\sigma_n$. Thus $w$ cannot contain $w_n^I$ or $w_n^J$ since the generator $\sigma_n$ occurs in both $w_n^I$ and $w_n^J$. We conclude that  $d_{n}^0= 0$. 
	We now assume that $s>n$. In this case $G(\langle n,n  \rangle \langle  n-1,n  \rangle \ldots  \langle  0,n \rangle )$ is contained in $G(w)$ and this forces to $G(w)$ to contain $G(w_n^I)$, as illustrated in (\ref{s biger than n}) for $(n,s)=(6,7)$ and $(n,s)=(7,8)$, respectively. Therefore, $w\in D_n^s$ and  $ d_n^s = a_n^s $. 
\end{demo}

\begin{equation} \label{s biger than n}
		\scalebox{.6}{\sbigerthann }
	\end{equation}
Theorem  \ref{lemma extremes cases blobbed fully commutative elements} confirms the fact that $W^c_b(\atc)$ is finite. We continue our counting with the case $s=1$. We first introduce some suitable notations. 
 
\begin{defi}\rm
	We define ${I_{n,r}^\leftarrow }	$ to be  the subset of $D_n^1$ formed by all  the grids that are obtained from $G(w_n^I)$ by adding black dots exclusively on its left  and with $r$ dots added in its leftmost non-empty column. Similarly, we define ${I_{n,r}^\rightarrow }$ to be the subset of $D_n^1$ formed by all the grids that are obtained from $G(w_n^I)$  by adding black dots exclusively on its right  and with $r$ dots added in its highest non-empty diagonal\footnote{Here ``diagonal'' means a line with slope $-1$.}. We also define $J_{n,r}^{\leftarrow}$ and $J_{n,r}^{\rightarrow }$ by considering $w_n^J$ rather than $w_n^I$. Furthermore, we set $i_{n,r}^\leftarrow =|I_{n,r}^\leftarrow|, i_{n,r}^\rightarrow=|I_{n,r}^\rightarrow|, j_{n,r}^\rightarrow=|J_{n,r}^\rightarrow|$ and $j_{n,r}^\leftarrow=|J_{n,r}^\leftarrow|$. Finally, we define
	\begin{align}
	i_n^\leftarrow:=\sum_{r=0}^{n-1} i_{n,r}^\leftarrow \qquad \mbox{ and } \qquad i_n^\rightarrow:=\sum_{r=0}^{n-1}	 i_{n,r}^\rightarrow \label{definitionI} \\
	j_n^\leftarrow:=\sum_{r=0}^{n-1} j_{n,r}^\leftarrow \qquad \mbox{ and } \qquad j_n^\rightarrow:=\sum_{r=0}^{n-1}  j_{n,r}^\rightarrow.\label{definitionJ}
	\end{align}
\end{defi}

\begin{lem}\label{BiyectionI}
	Let $n$ and $r$ be integers such that $0\leq r <n$. We have $i_{n,r}^\leftarrow=i_{n,r}^\rightarrow$ and $j_{n,r}^\leftarrow=j_{n,r}^\rightarrow$.
\end{lem}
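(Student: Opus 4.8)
The plan is to prove each of the two equalities by producing an explicit, $r$-preserving bijection, namely $I_{n,r}^\leftarrow \to I_{n,r}^\rightarrow$ and $J_{n,r}^\leftarrow \to J_{n,r}^\rightarrow$. The conceptual source of such a bijection is the palindromic shape of the oblique forms $w_n^I = IJI$ and $w_n^J = JIJ$: reading the three obliques of $G(w_n^I)$ from right to left again yields $I,J,I$, so the transformation $\rho$ that reverses the order of the rigid blocks and replaces each row index $j$ by $n-j$ sends positive grids to positive grids (this is exactly what the conditions of Proposition \ref{proposition normal form posit} encode), fixes $G(w_n^I)$ and $G(w_n^J)$, and interchanges their two extremal rigid blocks. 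First I would use Proposition \ref{proposition normal form posit} together with Corollary \ref{coro oblique form} to describe a member of $I_{n,r}^\leftarrow$ as the rigid core $G(w_n^I)$ carrying an appendage of added dots on its left, encoded by a weakly monotone sequence of block endpoints whose distinguished term records the number $r$ of dots in the leftmost column; dually, a member of $I_{n,r}^\rightarrow$ is the same core together with a diagonal appendage on the right whose extremal slope-$(-1)$ line carries $r$ dots.

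The key steps, in order, would be: (1) pin down from conditions (1)--(5) of Proposition \ref{proposition normal form posit} exactly which appendages are admissible on each side, and check that these are precisely the ones keeping the element positive and of affine length one, so that the result lies in $D_n^1$; (2) invoke Proposition \ref{lemma blobbed if part} to show that every grid in $I_{n,r}^\leftarrow$ (resp. $I_{n,r}^\rightarrow$) is uniquely the core $G(w_n^I)$ plus such an appendage, reducing the count of grids to the count of appendages; (3) define the bijection on appendages. Here I must be careful: the left appendage is \emph{columnar} (it lengthens the upper blocks downward) while the right appendage is \emph{diagonal} (it appends new obliques), so the map cannot be the bare reflection $\rho$, which carries columns to columns. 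I expect it to be $\rho$ composed with the shear that turns the reflected columnar data into diagonal data, and the essential point to verify is that this composite carries the leftmost-column statistic to the highest-diagonal statistic, i.e. that it preserves $r$. Finally (4) I would check that the map is a well-defined bijection with an explicitly constructed inverse. The argument for $J$ is identical, with the proviso that for $n$ odd the Dynkin automorphism $\sigma_j \mapsto \sigma_{n-j}$ underlying $\rho$ swaps $I$ and $J$, a symmetry that must simply be carried along.

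The main obstacle is the affine-length bookkeeping at the seam between core and appendage. Since $\rho$ exchanges row $0$ and row $n$, applied naively to a whole grid it can turn the single $\sigma_n$-dot of $G(w_n^I)$ into a $\sigma_0$-dot and even create a second $\sigma_n$-dot, leaving $D_n^1$; this is precisely why the two statistics are read off different features (a column on the left, a diagonal on the right) instead of mirror-image features. The crux is thus to verify that an admissible appendage never meets row $n$ on either side, and that after reattaching the transformed appendage to the fixed core the conditions (4) and (5) of Proposition \ref{proposition normal form posit} still hold, because these two conditions are exactly interchanged by $\rho$ (repeated left endpoints forced to $0$ on one side correspond to repeated right endpoints forced to $n$ on the other). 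I would discharge this by treating the unique row-$n$ block and the unique row-$0$ block of $G(w_n^I)$ by hand. Should the seam analysis turn out to be delicate, a safe fallback is to bypass the bijection and instead read off both $i_{n,r}^\leftarrow$ and $i_{n,r}^\rightarrow$ directly as counts of monotone lattice sequences bounded by the profile of $G(w_n^I)$, and to check term by term that the two counts coincide.
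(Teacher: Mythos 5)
Your proposal is correct and takes essentially the same route as the paper: the paper's proof exhibits exactly the bijection you describe, namely a $45^{\circ}$ rotation (your shear) of the region of added dots to the right of $G(w_n^I)$ followed by a reflection through a vertical edge, which leaves the core fixed, converts the diagonal appendage into a columnar one, and sends the $r$ dots on the highest diagonal to the $r$ dots in the leftmost column, so the statistic is preserved. The seam and affine-length issues you flag are sidestepped in the paper precisely as you anticipate, by transforming only the appendage and never touching $G(w_n^I)$ itself.
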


\begin{demo}
	We just prove $i_{n,r}^\leftarrow=i_{n,r}^{\rightarrow}$, the other equality is treated similarly. It is enough to exhibit a bijection between  $I_{n,r}^{\rightarrow} $ and $I_{n,r}^\leftarrow$. Such a bijection is given  by a rotation of the  region on the right of  $G(w_n^I)$ in $45^\circ$ clockwise followed by a reflection through a vertical edge. 	We depicted such transformations in (\ref{equation rotation of the diagonals}) for $n=8$.  We notice that under this bijection the points located on the highest diagonal are mapped to points in the leftmost column. 
\end{demo}

\begin{equation}\label{equation rotation of the diagonals}
	\scalebox{.5}{\rotatingthediagonals }
\end{equation}	
Lemma \ref{BiyectionI} allows us to define
\begin{equation}
	i_{n,r}:=i_{n,r}^\leftarrow=i_{n,r}^\rightarrow \qquad \mbox{ and } \qquad j_{n,r}:=j_{n,r}^\leftarrow=j_{n,r}^\rightarrow.
\end{equation}
Similarly, we define
\begin{equation}
	i_n:=i_n^\leftarrow=i_n^\rightarrow \qquad \mbox{ and } \qquad  j_n:=j_{n}^\leftarrow=j_n^\rightarrow.
\end{equation}

\begin{lem}   \label{theorem elements eliminated in affine length one}
	Let $n$ be a positive integer. We have
	\begin{equation*}
		d_{n}^1=\left\{ \begin{array}{l l}
		(i_n)^2, & \mbox{if n is even;}\\  (j_n)^2,  &	\mbox{if $n$ is odd.}
 \end{array}
\right.
	\end{equation*}
\end{lem}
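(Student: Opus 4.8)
The plan is to reduce the statement to a single forbidden pattern on each side of an unmovable ``core'' and then to produce a product decomposition of $D_n^1$. First I would record the affine-length bookkeeping for the patterns themselves. When $n$ is even, $\sigma_n$ occurs once in $w_n^I=IJI$ (only in the central $J$) and twice in $w_n^J=JIJ$; when $n$ is odd the roles are reversed. Since every element of $D_n^1$ has affine length exactly $1$ and contains $w_n^I$ or $w_n^J$, and since containment of one fully commutative element in another can never decrease the number of occurrences of a fixed generator (as recalled before Lemma \ref{lemma Sadek to prove the only if}), the pattern $w_n^J$ cannot be contained when $n$ is even and $w_n^I$ cannot be contained when $n$ is odd. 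Thus for $n$ even every $w\in D_n^1$ contains $w_n^I$, and by Proposition \ref{lemma blobbed if part} this is equivalent to $G(w)$ containing $G(w_n^I)$. The unique dot of $G(w)$ in the $n$-th row must coincide with the unique row-$n$ dot of $G(w_n^I)$, which pins the horizontal position of the core inside $G(w)$; hence the occurrence of the core is unique, and the dots of $G(w)$ outside the core split unambiguously into those lying to its left and those lying to its right.

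Next I would prove that the map sending $w\in D_n^1$ to the pair formed by the core together with its left dots, and the core together with its right dots, is a bijection from $D_n^1$ onto $\bigl(\bigsqcup_{r} I_{n,r}^\leftarrow\bigr)\times\bigl(\bigsqcup_{r} I_{n,r}^\rightarrow\bigr)$, the two factors being exactly the grids in $D_n^1$ whose extra dots lie solely on the left, respectively solely on the right. Well-definedness and injectivity are immediate from the uniqueness of the core established above. For surjectivity one glues an arbitrary left-extension and an arbitrary right-extension along their common core and must check that the result is again the grid of a positive fully commutative element of affine length $1$ containing $G(w_n^I)$.

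The legitimacy of this gluing is the crux and the step I expect to demand the most care. Its justification is that the defining inequalities of a positive grid in Proposition \ref{proposition normal form posit}, namely conditions (1)--(5), are \emph{local}: each only compares two consecutive rigid blocks. Consequently admissibility couples the left dots only to the left edge of the fixed core, and the right dots only to its right edge, and never the left dots to the right dots directly. Holding the core fixed, a valid left-extension and a valid right-extension can therefore be combined freely; I would verify that the combination creates no new row-$n$ dot (so the affine length stays $1$) and does not disturb the embedded $G(w_n^I)$, and that the extra dots remain confined to their respective sides after gluing.

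Finally, by definition $\bigl|\bigsqcup_{r} I_{n,r}^\leftarrow\bigr|=i_n^\leftarrow$ and $\bigl|\bigsqcup_{r} I_{n,r}^\rightarrow\bigr|=i_n^\rightarrow$, so the bijection gives $d_n^1=i_n^\leftarrow\, i_n^\rightarrow$, and Lemma \ref{BiyectionI} identifies both factors with $i_n$, yielding $d_n^1=(i_n)^2$ for $n$ even. The odd case is handled identically, replacing $w_n^I$, $I_{n,r}^\leftarrow$, $I_{n,r}^\rightarrow$ and $i_n$ by $w_n^J$, $J_{n,r}^\leftarrow$, $J_{n,r}^\rightarrow$ and $j_n$, which gives $d_n^1=(j_n)^2$.
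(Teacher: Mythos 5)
Your proof is correct and follows essentially the same route as the paper: for $n$ even one discards $w_n^J$ by counting occurrences of $\sigma_n$, reduces to a unique embedded copy of $G(w_n^I)$, and multiplies the counts of left and right extensions, with the odd case symmetric. The only difference is that you spell out, via the locality of conditions (1)--(5) in Proposition \ref{proposition normal form posit}, the independence of left and right additions that the paper simply asserts by pointing at the picture in \eqref{equation many heaps}; this is a welcome elaboration, not a different argument.
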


\begin{demo}
	We assume that $n$ is even. We need to count the number of elements in $D_n^1$, that is, the number of positive fully commutative elements of affine length one that contain $w_n^I $ or $w_n^J$. Since $\sigma_n$ occurs twice in $w_n^J$ when $n$ is even, we conclude that $w_n^J$  cannot be contained in an element of affine length one. For this reason we only care about $w_n^I$. In (\ref{equation many heaps}) we have depicted  $G(w_n^I)$, for $n=2$, $n=4$, $n=6$ and $n=8$, where we have added white dots to indicate the positions where we can add black dots keeping the affine length one. Since any addition of black dots in the left  is independent of any addition of black dots in the right, and vice-versa,  we conclude that
\begin{equation}
		d_n^1= i_n^\leftarrow \cdot  i_n^\rightarrow=i_n^2.
	\end{equation}	
The result for $n$ odd is treated similarly. 
\end{demo}
	\begin{equation} \label{equation many heaps}
		\scalebox{.5}{\manyheaps }
	\end{equation}

	
	\begin{lem} \label{lemma d appears in the trinagle}
		Let $n$ and $r$ be integers such that $0\leq r <n$. Then, we have  
		\begin{equation}\label{down region in the blobbed catalan triangle }
		i_{n,r}=\left\{ \begin{array}{l l}
		C_{n-2-r,r}, & \mbox{if n is even; }\\  0,  &	\mbox{if $n$ is odd;}
 \end{array}
\right.\qquad \mbox{ and } \qquad 
j_{n,r}=\left\{ \begin{array}{l l} 
 	0, & \mbox{ if n is even; }\\
 	\frac{1}{2}C_{n-1-r,r} &\mbox{ if n is odd.}
\end{array}\right.
	\end{equation}
Consequently, 
\begin{equation}\label{equation in   }
		i_{n}=\left\{ \begin{array}{l l}
		C_{n,0}, & \mbox{if n is even; }\\  0,  &	\mbox{if $n$ is odd;}
 \end{array}
\right.\qquad \mbox{ and } \qquad 
j_{n}=\left\{ \begin{array}{l l} 
 	0, & \mbox{ if n is even; }\\
 	\frac{1}{2}C_{n,1} &\mbox{ if n is odd.}
\end{array}\right.
	\end{equation}
\end{lem}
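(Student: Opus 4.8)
The plan is to reinterpret $i_{n,r}$ and $j_{n,r}$ as path counts in the Catalan triangles of Section \ref{section catalan combinatorics}, and then to recover the totals $i_n$, $j_n$ by summing along an anti-diagonal with the recursions of Lemma \ref{lemma properties of Cs}. First I would dispose of the vanishing cases: when $n$ is even the generator $\sigma_n$ occurs twice in $w_n^J$, and when $n$ is odd it occurs twice in $w_n^I$, so by Proposition \ref{lemma blobbed if part} neither can sit inside an element of affine length one; this gives $j_{n,r}=0$ for $n$ even and $i_{n,r}=0$ for $n$ odd. Hence only one quantity is nontrivial for each parity, and by Lemma \ref{BiyectionI} it suffices to count the right-extensions, i.e.\ the grids obtained from $G(w_n^I)$ (for $n$ even) or $G(w_n^J)$ (for $n$ odd) by appending black dots on the right, with exactly $r$ dots on the highest non-empty diagonal and without raising the affine length above one.

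The central step is an explicit bijection between such extensions and directed paths in the (blobbed) Catalan triangle. The positivity conditions (1)--(5) of Proposition \ref{proposition normal form posit} force the boundary of an admissible extension to be a monotone staircase, and the affine-length-one requirement --- that no new dot reach height $n$ --- is exactly the ``stay below the main diagonal'' condition that the Catalan triangles encode. Reading the successive diagonals of the appended region as successive rows of the triangle, an extension with $r$ dots on its highest diagonal corresponds to a directed path ending at the vertex $(n-2-r,r)$ in the even case, so that $i_{n,r}=C_{n-2-r,r}$. In the odd case the shape $G(w_n^J)$ is symmetric about its lowest row, so the admissible paths live on the hypotenuse side of the triangle; accounting for the double edges placed on the hypotenuse in (\ref{Example blobbed Catalan two}), the per-diagonal count equals one half of the blobbed number, giving $j_{n,r}=\frac{1}{2}C_{n-1-r,r}$, with the extreme diagonal requiring the double-edge bookkeeping of the hypotenuse. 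Making this bijection precise, and in particular pinning down the factor $\frac{1}{2}$ and the double-edge contribution at the extreme diagonal, is the step I expect to be the main obstacle.

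It remains to sum over $r$. For $n$ even, properties (1) and (3) of Lemma \ref{lemma properties of Cs} give
\begin{equation*}
	i_n=\sum_{r=0}^{n-1}C_{n-2-r,r}=C_{n-1,1}=C_{n,0},
\end{equation*}
with no boundary term. For $n$ odd, property (3) applied to $C_{n,1}$ yields $C_{n,1}=\sum_{k=0}^{n}C_{n-1-k,k}$, so that
\begin{equation*}
	\sum_{r=0}^{n-1}C_{n-1-r,r}=C_{n,1}-C_{-1,n}=C_{n,1}-1,
\end{equation*}
using $C_{-1,n}=1$ from Definition \ref{defi Catalan triangle}(1). The missing unit is supplied by the extreme diagonal, where the hypotenuse double edge contributes the full value rather than its half; adding the corresponding $\frac{1}{2}$ back to $\frac{1}{2}(C_{n,1}-1)$ gives $j_n=\frac{1}{2}C_{n,1}$. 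The two stated closed formulas for $i_n$ and $j_n$ then follow at once.
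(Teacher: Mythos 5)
Your reduction is fine up to the point where the real work begins: the vanishing of $j_{n,r}$ for $n$ even and of $i_{n,r}$ for $n$ odd (two occurrences of $\sigma_n$ force affine length at least two), and the appeal to Lemma \ref{BiyectionI} to pass to one side, are exactly what the paper does. But the central claim --- that the admissible extensions of $G(w_n^I)$ are in bijection with directed paths in the blobbed Catalan triangle ending at the vertex $(n-2-r,r)$ --- is asserted rather than proved, and you yourself flag it as ``the main obstacle.'' That identification \emph{is} the lemma; without it nothing has been established. The paper never constructs such a bijection. Instead it argues by induction on even $n$ in steps of two: $G(w_{n+2}^I)$ together with its left region contains $G(w_n^I)$ together with its left region, so an element of $I_{n+2}^{\leftarrow}$ amounts to a choice of $r$ dots in the new leftmost column together with an element of $I_{n}^{\leftarrow}$, and positivity forces at least $r-1$ dots in the second column. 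This yields $i_{n+2,r}^{\leftarrow}=\sum_{k\geq r-1} i_{n,k}^{\leftarrow}$, which is precisely the partial-sum recursion satisfied by the $C_{i,j}$ (Lemma \ref{lemma properties of Cs}), with the base case $n=2$ checked by hand. If you wish to keep the path-counting picture you must define the map diagonal by diagonal and verify that it respects the constraints of Proposition \ref{proposition normal form posit}; otherwise the inductive recursion is the argument you should actually write down.

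The odd case is where your sketch becomes internally inconsistent. You correctly compute $\sum_{r=0}^{n-1}\tfrac{1}{2}C_{n-1-r,r}=\tfrac{1}{2}(C_{n,1}-1)$, which does expose a genuine boundary issue in the statement (the missing term is $\tfrac{1}{2}C_{-1,n}=\tfrac{1}{2}$, so the displayed per-$r$ formula and the displayed total cannot both hold with $j_n$ given by \eqref{definitionJ}). But your repair --- letting the extreme diagonal ``contribute the full value rather than its half'' --- contradicts the formula you are simultaneously claiming, which at $r=n-1$ reads $j_{n,n-1}=\tfrac{1}{2}C_{0,n-1}=\tfrac{1}{2}$ and in any case cannot be the cardinality of a set. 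You cannot both define $j_n=\sum_{r}j_{n,r}$ and then add an extra $\tfrac{1}{2}$ at the end. The honest route is to determine $j_{n,r}$ at the boundary directly from the grids, by running the same induction as for the $i$'s with $w_n^J$ in place of $w_n^I$, and then adjust the summation range (or the boundary value) accordingly, rather than absorbing the discrepancy into unexplained double-edge bookkeeping.
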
	
	
	\begin{demo}
	We only prove the formulas for $i_{n,r}$ and $i_n$. We recall that $i_{n,r}=i_{n,r}^\leftarrow=i_{n,r}^\rightarrow$. Thus it is enough to show that $i_{n,r}^\leftarrow$ matches with the value given in \eqref{down region in the blobbed catalan triangle }. If $n$ is odd then $w_n^I $ has already two occurrences of the generator $t_n$. Therefore, $I_{n,r}^\leftarrow=\emptyset$ since  $I_{n,r}^\leftarrow$ is by definition a subset of $D_n^1$. We conclude that $i_{n,r}=i_{n,r}^\leftarrow =0 $. We now assume that $n$ is even and proceed by induction. It can be easily checked that
	\begin{equation}
		i_{2,0}^\leftarrow =1 = C_{0,0} \qquad \mbox{ and } \qquad i_{2,1}^{\leftarrow} =1=C_{-1,1}, 
 	\end{equation}
	which provides the base of our induction for $n=2$. We now assume that (\ref{down region in the blobbed catalan triangle }) holds for $n$ and we will prove it for $n+2$. The key point here is that the $G(w_{n+2}^I)$ and its left region ``contains'' $G(w_n^I)$ and its left region, as illustrated in  (\ref{equation heap in other heap}) for $n=6$. Therefore, any element of 
	$I_{n+2}^\leftarrow$ is given by a number of black dots added in the leftmost non-empty column of $G(w_{n+2}^I)$ and some element of $I_{n}^\leftarrow $. 
	\begin{equation} \label{equation heap in other heap}
		\scalebox{.5}{\heapinotherheap}
	\end{equation}
If we add $0$ or $1$ black dots in  the leftmost non-empty column of $G(w_{n+2}^I)$ then we can consider any element of $I_{n}^\leftarrow $ in order to construct an element of $I_{n+2}^\leftarrow $.  Thus, our induction hypothesis and Lemma \ref{lemma properties of Cs} imply 
 \begin{equation}
 i_{n+2,0}^{\leftarrow} =  i_{n+2,1}^{\leftarrow} =\sum_{k=0}^{n-1} i_{n,k}^{\leftarrow} = \sum_{k=0}^{n-1} C_{n-2-k,k} = C_{n-1,1} =C_{n,0} .
\end{equation}
We now assume that $ 2\leq r \leq n $. If we add $r $ black dots in  the leftmost non-empty column of $G(w_{n+2}^I)$ then we are forced to add at least $r-1$ black dots in its second column, otherwise the configuration obtained would not be a grid of a positive fully commutative element. Therefore, 
\begin{equation}
i_{n+2,r}^{\leftarrow} =  \sum_{k=r-1}^{n-1} i_{n,k}^{\leftarrow} = \sum_{k=r-1}^{n-1} C_{n-2-k,k} =\sum_{k=0}^{n-r} C_{ (n-r)-1-k,r-1+k} =  C_{(n+2)-2-r,r}.
\end{equation}
Furthermore, we note that $i_{n+2,n+1}^{\leftarrow} =1 = C_{-1,n+1}$. This complete the proof of \eqref{down region in the blobbed catalan triangle }. Finally, \eqref{equation in   } is now a consequence of Lemma \ref{lemma properties of Cs} and \eqref{down region in the blobbed catalan triangle }. 
	\end{demo}

\begin{teo}
Let $n$	be a positive integer. We have 
\begin{equation*}
		d_{n}^1=\left\{ \begin{array}{l l}
		(C_{n,0})^2 & \mbox{if n is even }\\  (\frac{1}{2}C_{n,1})^2  &	\mbox{if $n$ is odd}
 \end{array}
\right.
	\end{equation*}
\end{teo}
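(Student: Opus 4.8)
The plan is to obtain this statement as an immediate corollary of the two preceding lemmas, with no new combinatorial input required. Lemma \ref{theorem elements eliminated in affine length one} already expresses $d_n^1$ as the square of $i_n$ or $j_n$ according to the parity of $n$, and Lemma \ref{lemma d appears in the trinagle} (specifically the second displayed equation, \eqref{equation in   }) supplies the closed forms of $i_n$ and $j_n$ in terms of the blobbed Catalan triangle. Thus the entire argument is a substitution: I would simply plug the values of $i_n$ and $j_n$ into the formula for $d_n^1$.

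Concretely, first I would treat the case where $n$ is even. Here Lemma \ref{theorem elements eliminated in affine length one} gives $d_n^1=(i_n)^2$, and \eqref{equation in   } gives $i_n=C_{n,0}$; combining these yields $d_n^1=(C_{n,0})^2$, as claimed. Next, for $n$ odd, the same lemma gives $d_n^1=(j_n)^2$, while \eqref{equation in   } gives $j_n=\tfrac{1}{2}C_{n,1}$, so that $d_n^1=\bigl(\tfrac{1}{2}C_{n,1}\bigr)^2$. This exhausts both parities and completes the proof.

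There is essentially no obstacle left at this stage, since all the genuine work has already been carried out in the earlier results. The conceptual difficulty lay in Lemma \ref{theorem elements eliminated in affine length one}, where the independence of left-side and right-side additions of black dots was used to factor $d_n^1$ as a product, and in the inductive computation of Lemma \ref{lemma d appears in the trinagle}, where the rotation bijection of Lemma \ref{BiyectionI} and the recursive identities of Lemma \ref{lemma properties of Cs} were invoked to identify $i_{n,r}$ and $j_{n,r}$ with entries of the blobbed Catalan triangle. Once those facts are in hand, the present theorem is a one-line consequence, and the only care required is to match the parity cases correctly.

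\begin{demo}
	This follows by combining Lemma \ref{theorem elements eliminated in affine length one} and \eqref{equation in   } in Lemma \ref{lemma d appears in the trinagle}. If $n$ is even then $d_n^1=(i_n)^2=(C_{n,0})^2$. If $n$ is odd then $d_n^1=(j_n)^2=\bigl(\tfrac{1}{2}C_{n,1}\bigr)^2$.
\end{demo}
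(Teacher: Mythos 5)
Your proof is correct and is exactly the paper's argument: the paper likewise proves this theorem by directly combining Lemma \ref{theorem elements eliminated in affine length one} with the closed forms for $i_n$ and $j_n$ from Lemma \ref{lemma d appears in the trinagle}. The parity cases are matched correctly, so nothing further is needed.
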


\begin{demo}
The result follows by a direct application of Lemma \ref{theorem elements eliminated in affine length one} and 	Lemma \ref{lemma d appears in the trinagle}.
\end{demo}

\medskip
\noindent
We recall that our goal in this section is to compute the numbers $\{ d_n^s  \}$. So far we have already computed these numbers for $s=0$, $s=1$ and $s>n$. To deal with the case $2\leq s \leq n$ we need a bit more of notation.

\begin{defi} \rm 
Let $n$ be a positive integer and $t$ be a non-negative integer. We define the set $\mathcal{I}_{n}^{\leftarrow , t }$ to be the set of grids obtained from  $G(w_n^I)$ by adding black dots exclusively on its left and exactly $t$ black dots on the $n$-th row. We define $\mathcal{I}_{n}^{\rightarrow , t }$ in a similar way but this time we consider the region located on the right of $G(w_n^I)$. We also define $\mathcal{J}_{n}^{\leftarrow , t }$ and $\mathcal{J}_{n}^{\rightarrow , t }$ by considering $w_n^J$ rather than $w_n^I$. Finally, we define $i_{n}^{\leftarrow, t} $, $i_{n}^{\rightarrow,t}$, $j_{n}^{\leftarrow , t} $ and $j_{n}^{\rightarrow , t}$ to be the cardinalities of the sets $\mathcal{I}_{n}^{\leftarrow , t }$,  $\mathcal{I}_{n}^{\rightarrow , t }$, $\mathcal{J}_{n}^{\leftarrow , t }$ and $\mathcal{J}_{n}^{\rightarrow , t }$, respectively.
\end{defi}

\begin{lem}  \label{lemma number of words i and j with a affine length given}
	Let $n$ be a positive integer and $t$ be a non-negative integer. We have
	\begin{equation} \label{equation ies increasing the affine lenght}
		i_{n}^{\leftarrow ,t} = i_n^{\rightarrow , t}  \qquad \mbox{ and } \qquad j_{n}^{\leftarrow ,t} = j_n^{\rightarrow , t}.
	\end{equation} 
\end{lem}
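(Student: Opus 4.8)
The plan is to prove the two equalities separately but by the same mechanism, and to reduce the amount of work by symmetry. The equality $j_n^{\leftarrow,t}=j_n^{\rightarrow,t}$ is obtained from $i_n^{\leftarrow,t}=i_n^{\rightarrow,t}$ by replacing $w_n^I$ with $w_n^J$ and swapping the parity of $n$ throughout, so I would write out only the $I$-case and then remark that the $J$-case is \emph{verbatim}. Thus the real task is to produce a bijection $\mathcal{I}_n^{\rightarrow,t}\xrightarrow{\ \sim\ }\mathcal{I}_n^{\leftarrow,t}$ for each fixed $t$, from which $i_n^{\leftarrow,t}=i_n^{\rightarrow,t}$ follows by counting.

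First I would reuse the geometric bijection already built in the proof of Lemma~\ref{BiyectionI}: rotate the region lying to the right of $G(w_n^I)$ by $45^\circ$ clockwise and then reflect it across a vertical edge, so that it is carried onto the region lying to the left of $G(w_n^I)$. By Lemma~\ref{BiyectionI} this map is a bijection between the admissible right-configurations and the admissible left-configurations, and it is compatible with the monotonicity conditions (1)--(5) of Proposition~\ref{proposition normal form posit}; hence it sends grids of positive fully commutative elements to grids of positive fully commutative elements. Since, by Proposition~\ref{lemma blobbed if part}, whether a completed grid lies in $\mathcal{I}_n^{\rightarrow,t}$ or $\mathcal{I}_n^{\leftarrow,t}$ is a condition read off directly from the grid, it is enough to understand the effect of this bijection on the distinguished statistic, namely the number of black dots sitting on the $n$-th row.

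The only genuinely new ingredient, beyond what Lemma~\ref{BiyectionI} supplies, is to verify that this map transports the dots measured by $t$ correctly, so that it restricts to a bijection between the two subsets of grids having \emph{exactly} $t$ dots on the $n$-th row. Concretely, I would determine the image, under the rotation--reflection, of the line along which these $t$ dots lie, exactly as Lemma~\ref{BiyectionI} records that the highest diagonal of the right region is sent to the leftmost column of the left region. I would then check, using the monotonicity conditions of Proposition~\ref{proposition normal form posit}, that along this distinguished line no dot is created or destroyed, so that $t$ is an invariant of the bijection. Restricting to the level set of $t$ then yields the claimed bijection and hence the equality of cardinalities; the same reasoning, with $w_n^J$ and the opposite parity, disposes of the $J$-case.

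The step I expect to be the main obstacle is precisely this bookkeeping: a $45^\circ$ rotation \emph{a priori} sends a horizontal line to an oblique one, so one must argue carefully that the dots counted by $t$ are not moved in a way that changes their count. The resolution I anticipate is that the relevant dots lie on the boundary of the region being rotated rather than in its interior, where the rotation composed with the reflection acts trivially on the pertinent line; isolating this boundary and matching it on the two sides is where the care is needed. As a fallback, should the direct transport become unwieldy, I would instead compute $i_n^{\leftarrow,t}$ and $i_n^{\rightarrow,t}$ independently in closed form by the inductive method of Lemma~\ref{lemma d appears in the trinagle} (expressing each as a sum of blobbed Catalan numbers via Theorem~\ref{teo closed formula for Cs}) and observe that the two expressions coincide; this loses the conceptual bijective explanation but gives the same equality.
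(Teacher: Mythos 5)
Your proposal is correct and follows essentially the same route as the paper: the paper also proves only the $I$-case, treats the $J$-case as analogous, and exhibits a bijection $\mathcal{I}_n^{\rightarrow,t}\to\mathcal{I}_n^{\leftarrow,t}$ by the rotate-by-$45^{\circ}$-and-reflect construction of Lemma \ref{BiyectionI}, noting that points on the $n$-th row are sent to points on the $n$-th row so that $t$ is preserved. The bookkeeping worry you raise is resolved in the paper by defining the map so that each rotated point is placed back at the same level (row) it occupied before the rotation, which makes the invariance of $t$ immediate.
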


\begin{demo}
	We only prove  $i_{n}^{\leftarrow ,t} = i_n^{\rightarrow , t}$. The other equality is treated similarly.  In order to see that $ i_{n}^{\leftarrow ,t} = i_n^{\rightarrow , t}$ it is enough to exhibit a bijection between $ \mathcal{I}_{n}^{\rightarrow ,t} $ and  $ \mathcal{I}_n^{\leftarrow , t} $. Such a bijection is described graphically as follows. Let $G \in \mathcal{I}_{n}^{\rightarrow ,t}$. We draw lines starting from the $0$-th row with slope $-1$ connecting the points in $G$ that are not involved in $G(w_n^I)$. Then, we rotate these lines around the point located in the $0$-th row in $45^\circ$ clockwise and locate the rotated points at the same level as they were before the rotation. After doing this we apply a reflection through a vertical line. Finally, we locate $G(w_n^I)$ on the right of the resultant configuration.  We illustrate in (\ref{exa bijection})  the bijection for $n=10$ and $t=3$. Some remarks are in order. Blue dots have no special meaning. They must be thought simply as black dots. The difference between black and blue dots is that the black ones are forced to appear in any element of $ \mathcal{I}_{10}^{\leftarrow ,3} $ and  $ \mathcal{I}_{10}^{\rightarrow , 3} $, whereas the blue ones are just a choice we did to illustrate the bijection. Finally, we notice that under this bijection the points  on the  $n$-th row are mapped to points on the $n$-th row.
\end{demo}
\begin{equation} \label{exa bijection}
		\exbijection
	\end{equation}

\begin{rem}\rm \label{remark arrows do not matter}
A practical consequence of Lemma 	\ref{lemma number of words i and j with a affine length given} is that the direction of the arrows  in the symbols $i_{n}^{\leftarrow, t} $, $i_{n}^{\rightarrow,t}$, $j_{n}^{\leftarrow , t} $ and $j_{n}^{\rightarrow , t}$ is irrelevant. For this reason we can relax the notation by dropping the arrows. Concretely, we define
\begin{equation}
	i_n^t:= i_n^{\leftarrow,t}= i_n^{\rightarrow , t} \quad \mbox{ and } \quad j_n^t:= j_n^{\leftarrow,t}= j_n^{\rightarrow , t}.
\end{equation}
\end{rem}

\begin{lem} \label{teo the last one}
	Let $n$ and $s$ be integers with $2\leq s \leq n$. Then, 
	
	\begin{equation} \label{equation dns}
		d_n^s = \left\{   \begin{array}{cc}
	\displaystyle	 \sum_{k=0}^{s-1} i_n^{ k} i_n^{ s-1-k} + \sum_{k=0}^{s-2} j_n^{ k}j_n^{ s-2-k} -2 \sum_{k=0}^{s-2}i_n^k j_n^{ s-2-k}   , & \mbox{if }  $n$  \mbox{ is even};  \\
			&   \\
 \displaystyle 	 \sum_{k=0}^{s-1} j_n^{ k} j_n^{ s-1-k} + \sum_{k=0}^{s-2} i_n^{ k} i_n^{ s-2-k}  -2 \sum_{k=0}^{s-2} j_n^k i_n^{ s-2-k}  , & \mbox{if }  $n$  \mbox{ is odd}.
		\end{array}  \right.
	\end{equation}
\end{lem}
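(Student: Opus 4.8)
The plan is to recast the count through the grid dictionary of Proposition~\ref{lemma blobbed if part}: an element $w$ lies in $D_n^s$ precisely when its grid $G(w)$ has exactly $s$ dots in the $n$-th row and contains $G(w_n^I)$ or $G(w_n^J)$. I would treat the two parities separately, noting that they are mirror images of one another: for $n$ even the pattern $w_n^I$ carries a single $t_n$ while $w_n^J$ carries two, and for $n$ odd these roles are reversed. This is exactly what interchanges $i_n^t$ with $j_n^t$ and turns the first line of \eqref{equation dns} into the second, so it suffices to argue the case $n$ even.

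First I would count the grids containing $G(w_n^I)$ by cutting each such grid along the pattern into the region strictly to its left and the region strictly to its right. Every dot in the $n$-th row belongs to exactly one of three pieces: the copy of $G(w_n^I)$ (which contributes one $n$-th-row dot when $n$ is even), the left region, or the right region. If the left region carries $k$ dots in the $n$-th row and the right region carries $s-1-k$, then summing over $k$ gives $\sum_{k=0}^{s-1} i_n^k\, i_n^{s-1-k}$; the essential input here is Lemma~\ref{lemma number of words i and j with a affine length given}, which makes left and right regions with a fixed number of $n$-th-row dots equinumerous, so that both factors are $i_n$'s rather than a left- and a right-count. The same cut applied to grids containing $G(w_n^J)$ — which now absorbs two $n$-th-row dots — yields $\sum_{k=0}^{s-2} j_n^k\, j_n^{s-2-k}$.

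The delicate point, and what the correction term is for, is that these two counts overcount: a long alternating run of $I$- and $J$-obliques hosts several overlapping copies of $IJI$ and $JIJ$, so a single grid can be cut in several ways. The structural control comes from Lemma~\ref{lemma two I forces all inthe middle} together with Corollary~\ref{coro oblique form}: the full $I$- and $J$-obliques of any positive grid form one \emph{single} maximal alternating block, so the redundancy is completely governed by the length of that block. I would compute, as a function of the block length, the multiplicity with which a grid is counted in $\sum_k i_n^k i_n^{s-1-k}$ and in $\sum_k j_n^k j_n^{s-2-k}$, and verify that the excess is repaired exactly by subtracting $2\sum_{k=0}^{s-2} i_n^k\, j_n^{s-2-k}$. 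This last convolution records the grids cut along an $IJIJ$/$JIJI$ overlap, and the factor $2$ reflects the two relative orientations (an $I$-pattern sitting to the left of a $J$-pattern, or the reverse) in which such an overlap can present itself.

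The main obstacle will be precisely this multiplicity bookkeeping — checking, uniformly in the block length, that the single-alternating-block structure forces the three convolution sums to collapse to the honest cardinality $d_n^s$ with no leftover. The hypothesis $2\le s\le n$ is what keeps the convolution indices in the regime where the counts $i_n^t,j_n^t$ behave as above, the degenerate ranges $s=0,1$ and $s>n$ having already been disposed of in Theorem~\ref{lemma extremes cases blobbed fully commutative elements} and in the preceding lemmas.
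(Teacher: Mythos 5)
Your plan is correct and is in substance the paper's own argument: both proofs cut the grid at the occurrences of $G(w_n^I)$ and $G(w_n^J)$, use the left--right independence together with Lemma~\ref{lemma number of words i and j with a affine length given} to turn each cut into a convolution of the $i_n^t$, $j_n^t$, and invoke the single-alternating-block structure of Lemma~\ref{lemma two I forces all inthe middle} to control overlapping occurrences. The only difference is presentational: the paper organizes the overlap bookkeeping as an explicit disjoint telescoping decomposition of $D_n^s$ into the set $D_n^s(I,u)$ at the central position and differences such as $D_n^s(J,u)\setminus D_n^s(I,u+1)$, whereas you run the same computation as an inclusion--exclusion with multiplicities; expanding the paper's set differences via $|A\setminus B|=|A|-|A\cap B|$ recovers exactly your three sums, including the factor $2$ coming from the two orientations of the $IJIJ$/$JIJI$ overlap.
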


\begin{demo}
For the sake of brevity we only prove the result for $n$ even. The case $n$ odd is handled similarly and is left to the reader.  We recall that $d_n^s$ is the cardinality of $D_n^s$ and that $D_n^s$ is the set of positive fully commutative elements of affine length $s$ that contain $G(w_n^I)$ or $G(w_n^J) $.  The main obstacle to carry out the counting of such elements is that there are elements in $D_n^s$ that contain $G(w_n^I)$ or $G(w_n^J) $ more than once. In order to overcome this drawback we need a couple of definitions:
\begin{enumerate}
	\item First, we label the $s$ black dots appearing in the $n$-th row of any element of $D_n^s$ from left to right as $P_1,P_2,\ldots P_s$.
	\item Given an element $x\in D_n^s $ and an integer $1\leq u \leq s$  we say that $w_n^I$ \emph{appears in} $x$ \emph{at position }$u$ if there is an occurrence of $G(w_n^I)$ in $x$ in which $P_u$ is involved. 
	\item  Given an element $x\in D_n^s $ and an integer $1\leq u < s$  we say that $G(w_n^J)$ \emph{appears in }$x$ \emph{at position }$u$ if there is an occurrence of $G(w_n^J)$ in $x$ in which $P_u$ and $P_{u+1}$ are involved. 
	\item Finally, we define the sets
	 \begin{equation}
	\displaystyle 	D_n^s(I,u)= \{ x\in D_n^s \, |\, G(w_n^I) \mbox{ appears in } x \mbox{ at position } u \}
	 \end{equation}
	 \begin{equation}
	 \displaystyle	D_n^s(J,u)= \{ x\in D_n^s \, |\, G(w_n^J) \mbox{ appears in } x \mbox{ at position } u \}
	 \end{equation}	 
\end{enumerate}

We split the proof into two cases according the parity of $s$. \\

\noindent
	{\bf{Case A.}} We suppose that $s$ is odd and set $\s:= (s+1)/2$. We can decompose the set $D_n^s$ as a disjoint union of the sets
	
	\begin{equation} \label{super decomposition}
		\begin{array}{ccc}
			& \De{I}{\s} &  \\
	\De{J}{\s -1} \backslash \De{I}{\s}  	 &  & \De{J}{\s} \backslash \De{I}{\s}  \\
	 \De{I}{\s -1} \backslash \De{J}{\s -1}     &   &  \De{I}{\s +1} \backslash \De{J}{\s }  \\
	 \De{J}{\s -2} \backslash \De{I}{\s-1}  	 &  & \De{J}{\s+1} \backslash \De{I}{\s+1}  \\
	 \De{I}{\s -2} \backslash \De{J}{\s -2}     &   &  \De{I}{\s +2} \backslash \De{J}{\s +1 }  \\
	  & & \\
	  \vdots &  & \vdots \\ 
	  & & \\ 
	\De{J}{1} \backslash \De{I}{2}  	 &  & \De{J}{s-1} \backslash \De{I}{s-1}  \\
	 \De{I}{1} \backslash \De{J}{1}     &   &  \De{I}{s} \backslash \De{J}{s-1 } . \\  
		\end{array}
	\end{equation}
With this decomposition at hand, we have reduced the proof of the theorem to compute the cardinality of each one of the sets occurring in (\ref{super decomposition}).\newline
We begin by determining the cardinality of $ \De{I}{\s}$. By definition, each element of $ \De{I}{\s}$ has a $G(w_n^I)$ at position $\s$. Furthermore,  to the left and to the right of this occurrence of $G(w_{n}^I)$ there are  $\mathfrak{s}-1$ black dots in the $n$-th row,  as illustrated for $s=7$ in (\ref{eq real counting}).
\begin{equation} \label{eq real counting}
\scalebox{1}{\realcounting }
\end{equation}
By using  the fact that any addition of black dots on the left is independent of any addition of black dots on the right and vice-versa, we conclude that
\begin{equation}\label{eq real counting A}
\displaystyle	|\De{I}{\s}| = i_n^{\mathfrak{s} -1} \cdot i_n^{\mathfrak{s} -1}.
\end{equation}	
We now compute $| \De{J}{\s -u} \backslash \De{I}{\s-u+1}|$, for $1\leq u <\s $. 	By definition, each element of
$\De{J}{\s -u}$ has a $G(w_n^J)$ at position $\s-u$. Furthermore,  to the left of this occurrence of $G(w_{n}^J)$ there are $\s-u-1$  black dots located in the $n$-th row. On the other hand, to the right of the aforementioned occurrence of $G(w_{n}^J)$ there are $\s+u-2$  black dots located in the $n$-th row. Therefore,  we have
\begin{equation}  \label{first counting}
	| \De{J}{\s -u} | =  j_n^{\s -u-1} \cdot j_n^{\s +u -2}. 
\end{equation}
To determine the cardinality of $\De{J}{\s -u} \backslash \De{I}{\s-u+1}$, it only remains to know the cardinality of
$\De{J}{\s -u} \cap \De{I}{\s-u+1}$. By definition, an element of $\De{J}{\s -u} \cap \De{I}{\s-u+1}$ has a $G(w_n^J)$ at position $\s-u$ and a $G(w_n^I)$ at position $\s-u+1$. These elements have $\s-u-1$  black dots  in the $n$-th row to the left of the aforementioned occurrence of $G(w_n^J)$ and $\s+u-2$ black dots  in the $n$-th row to the right of the aforementioned occurrence of $G(w_n^I)$. The above is illustrated for $s=13$ (and therefore $\s=7$) and $u=4$ in 
(\ref{eliminating the bad elements}), where we have depicted the relevant occurrences of $G(w_n^J)$ and $G(w_n^I)$ in black and red, respectively.
\begin{equation}  \label{eliminating the bad elements}
	\realcountingB
\end{equation}
We obtain
\begin{equation} \label{second counting}
	|\De{J}{\s -u} \cap \De{I}{\s-u+1}| = j_n^{\s -u-1} i_n^{\s +u-2}.
\end{equation}
Finally, a combination of (\ref{first counting}) and (\ref{second counting}) yields
\begin{equation}\label{eq real counting B}
	| \De{J}{\s -u} \backslash \De{I}{\s-u+1}| = j_n^{\s -u-1} ( j_n^{\s +u -2} -i_n^{\s +u-2}).
\end{equation}
Similarly, we obtain 
\begin{equation} \label{eq real counting C}
\begin{array}{r}
	|\De{I}{\s -u} \backslash \De{J}{\s -u}| =  i_n^{\s -u-1} ( i_n^{\s +u -1} -j_n^{\s +u-2});  \\
	| \De{J}{\s +u} \backslash \De{I}{\s+u}|  = j_n^{\s -u-2} ( j_n^{\s +u -1} -i_n^{\s +u-1}); \\
	|\De{I}{\s +u} \backslash \De{J}{\s +u-1}| =  i_n^{\s -u-1} ( i_n^{\s +u -1} -j_n^{\s +u-2});
\end{array}
\end{equation}		
for $ 1\leq u <\s$, $0\leq u <\s -1 $ and $1\leq u <\s $, respectively. We now combine (\ref{super decomposition}), (\ref{eq real counting A}), (\ref{eq real counting B}) and (\ref{eq real counting C}) to obtain
\begin{equation}\label{eq real counting D}
	\displaystyle d_n^s = \begin{array}{l}
	i_n^{\mathfrak{s} -1} \cdot i_n^{\mathfrak{s} -1} +\displaystyle	\sum_{u=1}^{\s-1}  j_n^{\s -u-1} ( j_n^{\s +u -2} -i_n^{\s +u-2}) \, +\\
	\displaystyle\sum_{u=1}^{\s-1}  i_n^{\s -u-1} ( i_n^{\s +u -1} -j_n^{\s +u-2}) + \displaystyle\sum_{u=1}^{\s-1}  i_n^{\s -u-1} ( i_n^{\s +u -1} -j_n^{\s +u-2})   \, + \\
	\displaystyle\sum_{u=0}^{\s-2}  j_n^{\s -u-2} ( j_n^{\s +u -1} -i_n^{\s +u-1}).
	\end{array} 
\end{equation}  	
In order to match the formula in (\ref{eq real counting D}) with the one in (\ref{equation dns}) we collect the terms formed by products of $j$'s, products of $i$'s and the products of $i$'s and $j$'s.  For instance, the products of $j$'s in (\ref{eq real counting D}) are
  \begin{equation}\label{eq real counting F}
	\displaystyle	\sum_{k=0}^{\s-2} j_n^kj_n^{2\s-3-k} + \sum_{k=\s-1}^{2\s-3} j_n^kj_n^{2\s-3-k} = \sum_{k=0}^{2\s-3} j_n^kj_n^{2\s-3-k}. 
	\end{equation}
	Finally, going back to the normal $s$, we conclude that the products of $j$'s in (\ref{eq real counting D}) is given by 
	\begin{equation}
		\sum_{k=0}^{s-2} j_n^kj_n^{s-2-k},  
	\end{equation}
	which is the same that appears in (\ref{equation dns}). The other terms are treated similarly. This finishes the proof of the theorem in this case.\\
	
	\noindent
	{\bf{Case B.}} We suppose that $s$ is even  and set $\s:= s/2$. In this case we can decompose $D_n^s$ as a disjoint union of the sets
		\begin{equation} \label{super decomposition s even}
	\begin{array}{ccc}
			& \De{J}{\s} &  \\
	\De{I}{\s } \backslash  \De{J}{\s}  	 &  &           \De{I}{\s+1} \backslash \De{J}{\s}  \\
	 \De{J}{\s -1} \backslash \De{I}{\s }     &   &    \De{J}{\s +1} \backslash \De{I}{\s +1 }  \\
	 \De{I}{\s -1} \backslash \De{J}{\s-1}  	 &  &         \De{I}{\s+2} \backslash \De{J}{\s+1}  \\
	 \De{J}{\s -2} \backslash \De{I}{\s -1}     &   &    \De{J}{\s +2} \backslash \De{I}{\s +2 }  \\
	  & & \\
	  \vdots &  & \vdots \\ 
	  & & \\ 
	\De{I}{2} \backslash \De{J}{2}   &  & \De{I}{s-1} \backslash \De{J}{s-2 } \\
	\De{J}{1} \backslash \De{I}{2}  	 &  & \De{J}{s-1} \backslash \De{I}{s-1}  \\
	 \De{I}{1} \backslash \De{J}{1}     &   &  \De{I}{s} \backslash \De{J}{s-1 } . \\  
		\end{array}
	\end{equation}
The rest of the argument carries over for this case. 
\end{demo}

\begin{lem}  \label{lemma number of words i and j with a affine length given A}
	Let $n$ be a positive integer and $t$ be a non-negative integer. We have
	
	\begin{equation} \label{equation ies increasing the affine lenght A}
		i_{n}^{t} =\left\{  \begin{array}{ll}
			C_{n,2t}, & \mbox{ if } n \mbox{ is even;}\\
			C_{n,2t+1},& \mbox{ if } n \mbox{ is odd.}  \quad \quad \quad
		\end{array} \right.
	\end{equation} 
	\begin{equation}\label{equation jes increasing the affine lenght A}
		j_{n}^{t}  =\left\{  \begin{array}{ll}
		(1/2)	C_{n+1,2t+1}, & \mbox{ if } n \mbox{ is even;}\\
		(1/2)	C_{n+1,2t},& \mbox{ if } n \mbox{ is odd.} 
		\end{array} \right.
	\end{equation}
	
\end{lem}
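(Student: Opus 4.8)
The plan is to first reduce the statement to the two left-counts. By Lemma \ref{lemma number of words i and j with a affine length given} (and Remark \ref{remark arrows do not matter}) we already know $i_n^t=i_n^{\leftarrow,t}$ and $j_n^t=j_n^{\leftarrow,t}$, so it suffices to evaluate $i_n^{\leftarrow,t}$ and $j_n^{\leftarrow,t}$. This is the convenient point of view because $\mathcal I_n^{\leftarrow,t}$ and $\mathcal J_n^{\leftarrow,t}$ are fillings of the region lying to the lower-left of the staircase $G(w_n^I)$ (resp. $G(w_n^J)$): adding $t$ dots to the top row $n$ means prepending exactly $t$ new columns reaching row $n$, and then one completes the configuration downward so that the result is still the grid of a positive fully commutative element. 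Throughout I would keep the normal form of Proposition \ref{proposition normal form posit} in the foreground, since it is exactly the inequalities $l_1\ge l_2\ge\cdots$ and $r_1\ge r_2\ge\cdots$, together with the equality conditions at $0$ and at $n$, that turn ``adding dots on the left'' into a constrained down-left lattice filling; the number of such fillings is precisely what the blobbed Catalan triangle was designed to count.

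The core of the argument is an induction on $n$ in steps of two, so as to respect the parity, mirroring exactly the induction already carried out in Lemma \ref{lemma d appears in the trinagle}. The base cases $n=1,2$ are checked by hand. For the inductive step I would use the nesting of grids $G(w_n^I)\subset G(w_{n+2}^I)$ (together with their left regions) displayed in \eqref{equation heap in other heap}: deleting the two topmost rigid blocks of $G(w_{n+2}^I)$ and the dots above row $n$ exhibits every element of $\mathcal I_{n+2}^{\leftarrow,t}$ as a choice of filling of the two new outer columns, constrained by Proposition \ref{proposition normal form posit}, stacked on top of an element of $\mathcal I_n^{\leftarrow,t'}$ for a controlled range of $t'$. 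Bookkeeping the admissible downward extensions of the two new columns is designed to reproduce, for $t\ge 1$, the recursion
\begin{equation*}
 i_{n+2}^{t}=i_n^{t-1}+2\,i_n^{t}+i_n^{t+1},
\end{equation*}
which is nothing but two applications of Definition \ref{defi Catalan triangle}(3), namely $C_{n+2,2t}=C_{n,2t-2}+2C_{n,2t}+C_{n,2t+2}$; at the boundary $t=0$ this degenerates (because column $-1$ is zero) into $i_{n+2}^0=i_n^0+i_n^1$, matching $C_{n+2,0}=C_{n,0}+C_{n,2}$ and the special treatment of the first two columns already seen in Lemma \ref{lemma d appears in the trinagle}. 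Carrying the induction for all $t$ simultaneously then forces $i_n^t=C_{n,2t}$ for $n$ even and $i_n^t=C_{n,2t+1}$ for $n$ odd. The case $t=0$ is consistent with Lemma \ref{lemma d appears in the trinagle}: summing $i_{n,r}$ over $r$ and using $C_{j,0}=C_{j-1,1}$ (Lemma \ref{lemma properties of Cs}(1)) gives $i_n^0=C_{n,0}$ for $n$ even. The computation of $j_n^{\leftarrow,t}$ is entirely parallel, the only difference being the shape of the top of $G(w_n^J)$.

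The step I expect to be the main obstacle is precisely the factor $1/2$ for $j_n^t$, that is, the correct handling of the top row of $G(w_n^J)$. Combinatorially this is the shadow of the double edges placed along the hypotenuse in the path interpretation of the blobbed Catalan triangle (see \eqref{Example blobbed Catalan two}): the raw filling count is read one row lower in the triangle, as $C_{n+1,2t+1}$ (resp. $C_{n+1,2t}$), and exactly half of the corresponding paths are realized as valid $w^J$-fillings, so that $j_n^t=\tfrac12 C_{n+1,2t+1}$ for $n$ even and $j_n^t=\tfrac12 C_{n+1,2t}$ for $n$ odd. Verifying that this halving propagates coherently through the inductive step, and that the resulting quantities remain integers, is the delicate part; it can be cross-checked against the base case $j_n^0=\tfrac12 C_{n+1,0}=\tfrac12 C_{n,1}$ of Lemma \ref{lemma d appears in the trinagle}, again via Lemma \ref{lemma properties of Cs}(1). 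Once both recursions and their base cases are in place the formulas follow, and Theorem \ref{teo closed formula for Cs} may be invoked afterwards if one wishes to record the answers purely in terms of binomial coefficients.
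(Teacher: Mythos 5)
Your reduction to the left-counts via Lemma \ref{lemma number of words i and j with a affine length given} is correct and matches the paper, and your proposed recursion $i_{n+2}^{t}=i_n^{t-1}+2i_n^{t}+i_n^{t+1}$ (with $i_{n+2}^{0}=i_n^{0}+i_n^{1}$ at the boundary) is numerically consistent with the answer, since it is exactly two applications of Definition \ref{defi Catalan triangle}(3). But this is where the gap lies: you obtain the recursion by expanding $C_{n+2,2t}$ and then assert that the ``bookkeeping of the two new columns'' is \emph{designed to reproduce} it. That is circular as written. The left region of $G(w_{n+2}^I)$ does not visibly factor as (choices for two outer columns) $\times$ (an element of $\mathcal I_n^{\leftarrow,t'}$): passing from $w_n^I$ to $w_{n+2}^I$ changes the top rigid block from $\langle n-1,n\rangle$ to $\langle n-1,n+1\rangle\,\langle n+1,n+2\rangle$, and the admissible downward extensions of the old columns interact with the new ones through the inequalities of Proposition \ref{proposition normal form posit}, so the coefficients $1,2,1$ and the shifts in $t$ would each need an explicit bijective justification. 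The same criticism applies with more force to the $j$-count: you identify the factor $1/2$ as ``the delicate part'' and then explain it only by appeal to the double edges on the hypotenuse of the triangle, i.e. by the shape of the intended answer rather than by the combinatorics of $G(w_n^J)$. As it stands, the two central identities of the lemma are motivated but not proved.

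For comparison, the paper avoids any new induction at this stage. For $n$ even and $t>0$ it embeds $\mathcal I_n^{\leftarrow,t}$ into the already-solved affine-length-one problem at rank $n+2t$: completing $G(w_n^I)$ to $G(w_{n+2t}^I)$ identifies $\mathcal I_n^{\leftarrow,t}$ with the subset of $I_{n+2t}^{\leftarrow}$ having at least $2t-1$ dots in the leftmost column, whence $i_n^{\leftarrow,t}=\sum_{k\geq 2t-1} i_{n+2t,k}^{\leftarrow}=\sum_{k=0}^{n}C_{n-1-k,\,2t-1+k}=C_{n,2t}$ by Lemma \ref{lemma d appears in the trinagle} and Lemma \ref{lemma properties of Cs}. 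For $n$ odd it uses a single ``special dot'' bijection $\mathcal I_n^{\leftarrow,t}\simeq \mathcal I_{n-1}^{\leftarrow,t}\sqcup\mathcal I_{n-1}^{\leftarrow,t+1}$, giving $C_{n-1,2t}+C_{n-1,2t+2}=C_{n,2t+1}$; the factor $1/2$ for $j_n^t$ is then inherited from the base case $j_{n,r}=\tfrac12 C_{n-1-r,r}$ already established in Lemma \ref{lemma d appears in the trinagle}, rather than argued afresh. If you want to salvage your route, the honest way is to prove your three-term recursion by exhibiting the decomposition of the left region explicitly — at which point you will essentially have reconstructed the inductive step of Lemma \ref{lemma d appears in the trinagle}, so reusing that lemma as the paper does is the shorter path.
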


\begin{demo}
	We only prove (\ref{equation ies increasing the affine lenght A}). Equation (\ref{equation jes increasing the affine lenght A}) is treated similarly. We split the proof into two cases in accordance with the parity of $n$. We recall from Remark \ref{remark arrows do not matter}  that $i_n^t=i_n^{\leftarrow,t}=i_{n}^{\rightarrow,t}$. Thus it is enough to show that   $i_n^{\leftarrow,t}$ matches with the value given in \eqref{equation ies increasing the affine lenght A}.
	
\medskip
\noindent
{\bf{Case A:}} $n $ is even. If $t=0$ we have $\mathcal{I}_{n}^{\leftarrow ,0}= I_{n}^\leftarrow$, then Lemma \ref{lemma d appears in the trinagle} implies 
\begin{equation}
	i_{n}^{\leftarrow ,0} = i_n^{\leftarrow} =i_n= C_{n,0},
\end{equation}
	as we wanted to show. We now assume that $t>0$. The key point here is that any element of  $ \mathcal{I}_{n}^{\leftarrow ,t}$ can be seen as an element of $I_{n+2t}^{\leftarrow}$ with at least $2t-1$ black dots added in the leftmost column.  The above is achieved by adding black dots to $G(w_n^I)$ in order to obtain $G(w_{n+2t}^I)$, as illustrated in  (\ref{equation extending}) for $n=10$ and $t=3$, where we have drawn in blue the points needed to pass from $G(w_{10}^I)$ to $G(w_{16}^I)$. We have also depicted the black dots that are forced to appear. In particular, there are $5=2t-1$ black dots forced to appear in the leftmost non-empty column.  
	\begin{equation}\label{equation extending}
		\scalebox{.5}{\extending }
	\end{equation}
	 By combining Lemma \ref{lemma properties of Cs} and 	Lemma \ref{lemma d appears in the trinagle} we obtain
		\begin{equation*}
i_{n}^{\leftarrow , t}	= \displaystyle \sum_{k=2t-1}^{n+2t-1} i_{n+2t,k}^{\leftarrow}   =  \displaystyle \sum_{k=0}^n  i_{n+2t,k+2t-1}^{\leftarrow }      =  \displaystyle \sum_{k=0}^n C_{n+2t-2-(k+2t-1),k+2t-1}  =  \displaystyle \sum_{k=0}^n C_{n-1-k,2t-1+k} = C_{n,2t}. 
		 \end{equation*}
		
\medskip		 
\noindent		 
{\bf{Case B:}} $n $ is odd. 	We claim that 
\begin{equation}\label{claim to prove odd part}
	i_n^{\leftarrow , t} =i_{n-1}^{\leftarrow , t} + i_{n-1}^{\leftarrow , t+1}.
\end{equation}
We notice that if (\ref{claim to prove odd part})  holds, then property (3) in Definition \ref{defi Catalan triangle} and an application of the theorem for the even case yield

\begin{equation}
	i_n^{\leftarrow , t} =i_{n-1}^{\leftarrow , t} + i_{n-1}^{\leftarrow , t+1}= C_{n-1,2t}+C_{n-1,2t+2}=C_{n,2t+1},
\end{equation}
which is what we want to show. So that to finish the proof of the theorem we only need to check (\ref{claim to prove odd part}). To see why the above formula is correct, we notice that there is a bijection between $\mathcal{I}_n^{\leftarrow , t} $ and the disjoint union of  $\mathcal{I}_{n-1}^{\leftarrow , t} $ and $ \mathcal{I}_{n-1}^{\leftarrow , t+1}$. Such a bijection is given  as follows. First, we draw $G(w_n^I)$. Then, we draw $t$ black dots in the $n$-th row to the left of $G(w_n^I)$. The occurrence of these $t$ dots forces the occurrence of $t$ black dots in the $(n-1)$-th row. Beside these black dots, we can still add another black dot in the intersection of the  $(n-1)$-th row with the leftmost non-empty column. We refer to  such a dot as the \emph{special} dot. The special dot is depicted in blue  in (\ref{picture odd case odd}) for  $n=11$ and $t=3$. We conclude that the elements of $\mathcal{I}_n^{\leftarrow , t} $ split into classes according whether we use the special dot or not. Let $G$ be an element of $\mathcal{I}_n^{\leftarrow , t} $.  If the special dot appears in $G$ then  by erasing the whole $n$-th row we obtain an element of $ \mathcal{I}_{n-1}^{\leftarrow , t+1}$. If the special dot does not  appear in $G$ then by erasing the whole $n$-th row we obtain an element of $ \mathcal{I}_{n-1}^{\leftarrow , t}$. This gives the promised bijection and completes the proof of (\ref{claim to prove odd part}). 
\end{demo}
\begin{equation}\label{picture odd case odd}
	\scalebox{.6}{ \nodd }
\end{equation}

\begin{teo}
Let $n$ and $s$ be integers with $2\leq s \leq n$. Then,
\begin{equation*} \label{equation dns-Catalan}
			d_n^s = \left\{   \begin{array}{l}
	\displaystyle	\sum_{k=0}^{s-2}  \left[C_{n,2k}( C_{n, 2(s-1-k)}-C_{n+1,2(s-k)-3})
		 + \frac{1}{4} C_{n+1,2k+1} C_{n+1, 2(s-k)-3}\right]    
		    + C_{n,2(s-1)}C_{n,0}     ,
			  \\
\displaystyle\sum_{k=0}^{s-2} C_{n,2(s-k)-3}(C_{n,2k+1}-C_{n+1,2k})+\frac{1}{4}\sum_{k=0}^{s-1}C_{n+1,2k}C_{n+1,2(s-1-k)}   ,
		\end{array}  \right.
	\end{equation*}
for $n$ even and odd, respectively.	
\end{teo}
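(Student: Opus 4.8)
The plan is to prove this theorem by directly substituting the closed formulas of Lemma \ref{lemma number of words i and j with a affine length given A} into the combinatorial expression for $d_n^s$ established in Lemma \ref{teo the last one}, and then rearranging the resulting sums so that they match the stated form. No new combinatorial input is required: both of the genuinely hard steps (the decomposition of $D_n^s$ into pieces $D_n^s(I,u)$ and $D_n^s(J,u)$, and the evaluation of the auxiliary quantities $i_n^t$, $j_n^t$ in terms of the blobbed Catalan triangle) have already been carried out, so what remains is purely algebraic bookkeeping, treated separately for the two parities of $n$ as in the statement.

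For $n$ even, Lemma \ref{lemma number of words i and j with a affine length given A} gives $i_n^t = C_{n,2t}$ and $j_n^t = \tfrac12 C_{n+1,2t+1}$. I would substitute these into the even-case formula of Lemma \ref{teo the last one} and use the index identity $2(s-2-k)+1 = 2(s-k)-3$. The three sums then become $\sum_{k=0}^{s-1} C_{n,2k} C_{n,2(s-1-k)}$, the quarter-weighted sum $\tfrac14\sum_{k=0}^{s-2} C_{n+1,2k+1} C_{n+1,2(s-k)-3}$, and the cross term $-\sum_{k=0}^{s-2} C_{n,2k} C_{n+1,2(s-k)-3}$ (the explicit factor $2$ cancelling the $\tfrac12$ coming from $j_n^t$). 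The only manipulation still needed is to split off the $k=s-1$ summand $C_{n,2(s-1)}C_{n,0}$ from the first sum, which recovers exactly the displayed expression.

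For $n$ odd the computation is even more transparent. Here $i_n^t = C_{n,2t+1}$ and $j_n^t = \tfrac12 C_{n+1,2t}$. Substituting into the odd-case formula of Lemma \ref{teo the last one}, the $j$-$j$ sum contributes $\tfrac14\sum_{k=0}^{s-1} C_{n+1,2k} C_{n+1,2(s-1-k)}$, the $i$-$i$ sum contributes $\sum_{k=0}^{s-2} C_{n,2k+1} C_{n,2(s-k)-3}$, and the cross term contributes $-\sum_{k=0}^{s-2} C_{n+1,2k} C_{n,2(s-k)-3}$. Grouping the last two sums by the common factor $C_{n,2(s-k)-3}$ yields precisely the stated formula, with no residual term to split off.

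The only points demanding care, and hence the sole (and minor) obstacle, are the index reindexings (verifying $2(s-2-k)+1 = 2(s-k)-3$ throughout and that each summation range is preserved under substitution) and the correct tracking of the factors of $\tfrac12$ arising from $j_n^t$: these combine with the explicit $2$ in the cross terms of Lemma \ref{teo the last one} to produce the $\tfrac14$ coefficients on the pure $j$-$j$ products and the plain $-1$ coefficient on the mixed products. Once these are verified, both identities follow immediately.
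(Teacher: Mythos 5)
Your proposal is correct and is exactly the paper's argument: the paper's proof of this theorem consists of the single sentence that the result follows by direct application of Lemma \ref{teo the last one} and Lemma \ref{lemma number of words i and j with a affine length given A}. Your substitutions, the reindexing $2(s-2-k)+1=2(s-k)-3$, the cancellation of the factor $2$ against the $\tfrac12$ from $j_n^t$, and the splitting off of the $k=s-1$ term in the even case all check out.
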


\begin{demo}
The result follows by a direct application of  Lemma \ref{teo the last one}  and Lemma \ref{lemma number of words i and j with a affine length given A}. 
\end{demo}

\medskip
\noindent
Table \ref{tabla valores dns} and Table \ref{tabla valores bns} show some  values of  $d_n^s$ and $b_n^s$, respectively.  There,  rows correspond to $n$ and  columns to $s$. We recall that $b_n^s=a_n^s-d_n^s$ and that the value of $a_n^s$ is given by Theorem \ref{teo closed formula for Cs} and Theorem \ref{teo enumeration normal words}. We collect the information obtained in this section in a polynomial, $P_n(v)\in \mathbb{N}[v]$, defined by 
$ P_n(v)= \sum_{s=0}^n b_n^s v^s$. Of course, $p_n:=P_n(1) = |\blobbed |$. Theorem \ref{theorem monomial basis for symplectic} tells us that $ \dim \simp = p_n$. The first nine terms of the sequence $(p_n)$ are  $5,\, 19,\, 84,\, 335,\, 1428,\, 5748,\, 24104,\, 97287,\, 404148$.

\begin{table}[h]
\begin{equation*}
	\scalebox{.75}{\begin{tabular}{|c||c|c|c|c|c|c|c|c|c|c|} \hline
$d_n^s $  & 0 & 1  & 2 & 3 & 4& 5 & 6 & 7 & 8  & 9  \\ \hline \hline
1 & 0 & 1  & 4 & 4 & 4 & 4 & 4 & 4 & 4 & 4\\ \hline
2 & 0 & 4 &  13 &  16	 & 16  & 16 & 16 & 16 &16 &16\\ \hline
3 &  0 & 9   &  42 &  61 &  64 & 64 & 64 & 64 & 64 & 64 \\  \hline
4 &  0 &  36  & 148  & 228  & 253 & 256 & 256 & 256  & 256  & 256  \\ \hline
5 &  0 &  100  & 500  & 845  & 990 & 1021 & 1024 &  1024 & 1024 & 1024\\ \hline
6 &  0 &  400  & 1825  & 3160  & 3846 & 4056 & 4093 & 4096 & 4096 & 4096\\ \hline
7 &  0 &  1225  & 6370  & 11711  & 14868 & 16051 & 16338 & 16381 & 16384  & 16384 \\ \hline
8 &  0 &  4900  & 23716  & 44100   & 57428  & 63308 & 65108 & 65484 & 65533 & 65536 \\ \hline
9 &  0 &  15876  & 84672  & 164304  & 221004 & 249012 & 259008 & 261609 & 262086 & 262141 \\ \hline
\end{tabular}}
\end{equation*}
\caption{Values of $d_n^s$}
\label{tabla valores dns}
\end{table}

\begin{table}[h]
\begin{equation*}
\scalebox{.8}{\begin{tabular}{|c||c|c|c|c|c|c|c|c|c|c|c|} \hline
$b_n^s$ & 0 & 1  & 2 & 3 & 4 &5&6&7&8&9 \\ \hline \hline
1 & 2 & 3  & 0 & 0 & 0 & 0 & 0 & 0 & 0 & 0  \\ \hline
2 & 6 & 10 &  3 &  0	 & 0  & 0 & 0 & 0 & 0 & 0 \\ \hline
3 &  20 & 41   &  20 &  3 &  0 & 0 & 0 & 0 & 0 & 0  \\  \hline
4 &  70 &  146  &  90 & 26  & 3 & 0 & 0 & 0 & 0 & 0  \\ \hline
5 &  252 &  572  & 412  & 157   & 32 & 3 & 0 & 0 & 0 & 0  \\ \hline
6 &  924 &  2108  &  1673 & 778  & 224 & 38 & 3 & 0 & 0 & 0  \\ \hline
7 &  3432 & 8213 & 7072     & 3733   &   1304 & 303  & 44 &  3 & 0 & 0 \\ \hline
8 &   12870 & 30850   & 28050  & 16402   & 6714  & 1954 & 394 &  50 & 3 & 0 \\ \hline
9 &  48620 & 120260   & 115112  &  72608 & 33044 & 11156 & 2792 & 497 & 56 & 3 \\ \hline
\end{tabular}}
\end{equation*}
\caption{Values of $b_n^s$}
\label{tabla valores bns}
\end{table}

\newpage

\section*{References}
\bibliographystyle{myalpha} 
\bibliography{mybibfile}

\end{document}